\documentclass[11pt, reqno]{amsart}
\usepackage{amsaddr}
\usepackage[english]{babel}
\usepackage{color}
\usepackage{amsmath, latexsym, amsthm, amsfonts,bm,amssymb,mathscinet} 
\usepackage{xifthen}
\usepackage[text={5.8in,8.3in},centering]{geometry}
\usepackage{epsfig}
\usepackage[colorlinks,linkcolor=blue, citecolor=blue,urlcolor=black]{hyperref}
\usepackage[dvipsnames]{xcolor}
\usepackage{natbib}
\usepackage{enumitem}
\usepackage{verbatim}
\usepackage{footmisc}
\usepackage{cleveref}
\usepackage[ruled, linesnumbered]{algorithm2e}
\usepackage{algpseudocode}
\usepackage{tikz}
\usepackage{makecell}
\usepackage{subcaption}

\crefname{algocf}{algorithm}{algorithm}
\Crefname{algocf}{Algorithm}{Algorithms}

\numberwithin{equation}{section}

\pagestyle{plain}
\usepackage{parskip}
\newcommand{\dd}{\,\mathrm{d}}
\renewcommand{\scr}[1]{{\mathfrak #1}}
\newcommand{\bb}[1]{{\mathbb #1}}

\newcommand{\OM}{{ \mathrm{O}M }}
\newcommand{\FM}{{ \mathrm{F}M }}
\newcommand{\Bm}{\begin{pmatrix}}
	\newcommand{\Em}{\end{pmatrix}}
\newcommand{\T}{{\prime}}
\newcommand{\der}[2]{ \ifthenelse{\isempty{#1}}{\frac{\dd}{\dd #2}}{\frac{\dd #1}{\dd #2}} }
\newcommand{\pder}[2]{ \ifthenelse{\isempty{#1}}{\frac{\partial}{\partial #2}}{\frac{\partial #1}{\partial #2}} }
\newcommand{\ind}{\mathbf{1}}

\newcommand{\eps}{\varepsilon}
\newcommand{\dist}{{ \mathrm{dist} }}
\theoremstyle{definition}
\newtheorem{thm}{Theorem}[section]
\newtheorem{prop}[thm]{Proposition}
\newtheorem{lem}[thm]{Lemma}
\newtheorem{cor}[thm]{Corollary}

\newtheorem{ex}[thm]{Example}


\newcommand{\abs}[1]{\ensuremath{\left\lvert #1 \right\rvert}}
\newcommand{\inner}[2]{\ensuremath{\left\langle #1 , #2 \right\rangle }}


\newcommand{\PP}{\mathbb{P}}

\usepackage{orcidlink}
\title{%
 Simulating conditioned diffusions on manifolds}
\date{\today}
\author{Marc Corstanje$^1$\orcidlink{0000-0002-2900-8773} \quad Frank van der Meulen$^1$\orcidlink{0000-0001-7246-8612} \quad Moritz Schauer$^2$\orcidlink{0000-0003-3310-7915} \quad Stefan Sommer$^3$\orcidlink{0000-0001-6784-0328}}
\address{%
	$^1$Department of Mathematics, Vrije Universiteit Amsterdam, NL\\
    $^2$Department of Mathematical Sciences, University of Gothenburg and Chalmers Technical University, Gothenburg, SE
	\\$^3$University of Copenhagen, DK
	}

\begin{document}
	\maketitle
	\let\thefootnote\relax\footnotetext{Contact: \href{mailto:M.A.Corstanje@vu.nl}{M.A.Corstanje@vu.nl}}

	\begin{abstract}
		To date, most methods for simulating  conditioned diffusions are limited to the Euclidean setting. The conditioned process can be constructed using a change of measure known as Doob's $h$-transform. The specific type of conditioning depends on a function $h$ which is typically unknown in closed form. To resolve this, we extend the notion of guided processes to a manifold $M$, where one replaces $h$ by a function based on the heat kernel on $M$. We consider the case of a Brownian motion with drift, constructed using the frame bundle of $M$, conditioned to hit a point $x_T$ at time $T$. We prove equivalence of the laws of the conditioned process and the guided process with a tractable Radon-Nikodym derivative. Subsequently, we show how one can obtain guided processes on any manifold $N$ that is diffeomorphic to $M$ without assuming knowledge of the heat kernel on $N$. We illustrate our results with numerical simulations of guided processes and  Bayesian parameter estimation based on discrete-time observations. For this, we consider both the torus and the Poincar\'e disk.    
    \\

		\noindent \textbf{Keywords: bridge simulation, Doob's $h$-transform, geometric statistics,  guided processes, Poincar\'e disk, Riemannian manifolds} \\

        \noindent \textbf{AMS subject classification: } 62R30, 60J60,  60J25
	\end{abstract}

	\section{Introduction}
	
			 Let $M$ be a compact $d$-dimensional Riemannian manifold and let $V$ be a smooth vector field on $M$. Denote by $\tilde{V}$ the horizontal lift of $V$ to the frame bundle $\FM$ of $M$ and consider the Stratonovich stochastic differential equation (SDE) 
		 \begin{equation}
		 	\label{eq:SDE-UV}
		 	\dd U_t = \tilde{V}(U_t) \dd t +  H_i(U_t)\circ \dd W_t^i, \qquad U_0=u_0.
		 \end{equation}
		  where $H_1,\dots, H_d$ are the canonical horizontal vector fields, $u_0$ an orthonormal frame, and $W$ an $\bb{R}^d$-valued Brownian motion. We  use Einstein's summation convention to sum over all indices that appear in both sub- and superscript. The aim of this paper is to construct a method for simulation of the process $X = \pi(U)$, started at $x_0=\pi(u_0)$ and  conditioned on hitting some $x_T\in M$ at time $T>0$, where $\pi$ denotes the projection from $\FM$ to  $M$. Such a process is often called a pinned-down diffusion or diffusion bridge. For readers unfamiliar with SDEs on manifolds, we provide background and references in Section \ref{sec:SDEmanifolds}. 

\subsection{Motivation and applications}
    A diffusion defined by a stochastic differential equation  provides a flexible way to model continuous-time stochastic processes.  Whereas the process is described in continuous-time, observations are virtually always discrete in time, possibly irregularly spaced.  Transition densities are rarely known in closed form, complicating likelihood-based inference for parameters in either the drift- or diffusion coefficients of the SDE. To resolve this, data-augmentation algorithms have been proposed where the latent process in between observation time is imputed.  For diffusions that take values in a linear space, numerous works over the past two decades have shown how to deal with this problem (e.g.\ \cite{BeskosPapaspiliopoulosRobertsFearnhead, GolightlyWilkinsonChapter, PapaRobertsStramer, mider2021continuous}).  The corresponding problem where the process takes values in a manifold has received considerably less attention, despite its importance in various application settings. Examples include evolving particles on a sphere, evolution of positive-definite matrices and evolution of compositional data. From a more holistic point of view, our work potentially comprises an important step for statistical inference in   state space models on manifolds (\cite{manton2013primer}, Section III). 
     
    Applications of conditioned manifold processes also arise in shape analysis e.g.  \cite{sommerBridgeSimulationMetric2017,arnaudonDiffusionBridgesStochastic2022} and more generally in geometric statistics, e.g. \cite{jensen2022discrete},  which focuses on Lie groups and homogeneous spaces. See \cite{sommerProbabilisticApproachesGeometric2020} for a general overview. Bridges can be applied to model shape changes caused by progressing diseases as observed in medical imaging, or to model changes of animal morphometry during evolution and subsequently used for analyzing phylogenetic relationships between species.
    
\subsection{Related work}
    Bridge simulation on Euclidean spaces has been treated extensively in the literature. See for instance \cite{durham2002, Stuart, BeskosPapaspiliopoulosRoberts, beskos-mcmc-methods, hairer2009, LinChenMykland, bladt2016, whitaker2017, bierkens2020piecewise} and references therein. It is known that the conditioned process satisfies a stochastic differential equation itself, where a guiding term that depends on the transition density of the process is superimposed on the drift of the unconditioned process. Unfortunately, transition densities are only explicit in very special cases.  One class of methods to construct conditioned processes consists of approximating the true guiding term by a tractable substitute. The resulting processes are called guided proposals, guided processes or twisted processes. By explicit computation of the likelihood ratio between the true bridge and the approximating guided process any discrepancy can be  corrected for in (sequential) importance sampling or Markov chain Monte Carlo methods. The approach taken in this paper falls into this class. 
    
    Early instances of this approach are \cite{clark1990} and   \cite{delyon2006} where the guiding term  matches the drift of a Brownian bridge.   A different, though related,  approach is construct a guiding term based on $\nabla_x \log \tilde{p}(t,x; T, x_T)$, where $\tilde{p}$ is the transition density of a diffusion for which it is known in closed form (see \cite{schauer_guidedproposals2017} and \cite{bierkens2020simulation}). A linear process is a prime example of such diffusions.  
    
    There is extensive literature on manifold stochastic processes, their construction and bridges, see e.g. \cite{emeryStochasticCalculusManifolds1989,Hsu2002}. The construction of conditioned manifold processes is the subject of \cite{baudoin2004conditioning}. Direct adaptation of bridge simulation methods devised for Euclidean space by embedding the manifold in $\mathbb{R}^d$ easily run into difficulties as the diffusivity matrix becomes singular, the drift unbounded and the numerical scheme has to be particularly adapted to ensure that the process in the embedding space stays on the manifold. 
     In the geometric setting, the approach of \cite{delyon2006}  is generalized to manifolds in \cite{jensen2019simulation, jensen2021simulation,bui2023inference}. The drift here comes from the gradient of the square distance to the target, mimicking the drift of the Euclidean drift approximation. An important complication with this approach is handling the discontinuity of the gradient at the cut locus of the target point. This is introducing unnecessary complexities because the true guiding term is often smooth, and the scheme effectively approximates the smooth true transition density with a discontinuous guiding term which is harder to control, both numerically and analytically. With the approach in the presented paper, we exactly achieve smooth guiding terms in accordance with the true transition density and we avoid the cut locus issues. In addition, to ensure that the guided process in fact hits the conditioning point, \cite{jensen2019simulation, jensen2021simulation} impose a Lyapunov condition on the radial process. Verifying this assumption for a specified vector field $V$ is difficult. The approach taken in this paper avoids this condition altogether. 
    \cite{bui2023inference} don't formally prove absolute continuity though carefully  assess validity of their approach by extensive numerical simulation.  

\subsection{Approach and main results}
    We extend the approach of simulating Euclidean diffusion bridges from \cite{schauer_guidedproposals2017} to manifolds. If $X$ is a Euclidean SDE with drift $b$ and diffusion coefficient $\sigma$, an SDE for the conditioned process $X^h$ (with law $\bb{P}^h$) is obtained by superimposing the SDE for $X$ with the term $\sigma\sigma^\T \nabla\log h \dd t$, where $h$ is the intractable transition density of $X$. It is thus impossible to directly simulate $X^h$. Instead, $X$ is approximated by an "auxiliary" linear process $\tilde X$ with known closed-form transition density $g$, and the SDE for $X$ is superimposed with a term $\sigma\sigma^\T \nabla \log g \dd t$ resulting in a guided process $X^g$ (with law $\bb{P}^g$). This process is not equal to $X^h$ in law, but the likelihood ratio is computable and the resulting scheme allows expectations $\mathbb E f(X^h)$ over the conditioned process to be evaluated. As sketched in \Cref{fig:tikzdiagram-manifolds}, the approach presented in this paper is a geometric equivalent of this construction: We cannot simulate the conditioned process $X^h$ for a process $X$ on a manifold $N$ because of intractability of the transition density. Instead, if $N$ is diffeomorphic to a comparison manifold $M$ with closed form heat kernel $g$, we map this guiding term from $M$ to $N$ to give a guided process $X^g$ on $N$, again with computable likelihood ratio allowing evaluation of $\mathbb E f(X^h)$. The "auxiliary" process in this case is thus a Brownian motion on $N$. \Cref{fig:tikzdiagram-manifolds} sketches a comparison between this approach and \cite{schauer_guidedproposals2017}. 

   \begin{figure}[h]
	 \centering
		\begin{subfigure}{2.7in}
        \centering
		\begin{tikzpicture}
			\node[draw = black] at (-0.5,0) (a) {\makecell{$X$}};
			\node[draw = black] at (-0.5,-3) (b) {\makecell{$X^h$}};
            \node[draw = black] at (3, 0) (c) {\makecell{$\tilde{X}$}};
			\node[draw = black] at (3,-3) (d) {\makecell{$X^g$}};
			\draw [->] (a)--(b);
   			\path (a) -- node[midway, left] (text3) {$h$} (b);
      		\draw [->] (a)--(d);
            \draw [->] (c)--(d);
   			\path (c) -- node[midway, right] (text3) {$g$} (d);
            \draw [->] (d)--(b);
   			\path (d) -- node[midway, below] (text3) {$\dd\bb{P}^h / \dd\bb{P}^g$} (b);
		\end{tikzpicture}
		\caption{Euclidean: $X$ is conditioned using $h$ to obtain $X^h$. Since $h$ is intractable, we derive $g$ from an auxiliary process $\tilde{X}$, and use it to define $X^g$. The tractable likelihood ratio $\dd\bb{P}^{h}/\dd\bb{P}^{g}$ allows for evaluation of $\bb{E} f(X^h)$.}
		\end{subfigure} \hspace*{\fill}
		\begin{subfigure}{2.7in}
        \centering
		\begin{tikzpicture}
			\node[draw = black] at (-0.5,0) (a) {\makecell{$X$}};
			\node[draw = black] at (-0.5,-3) (b) {\makecell{$X^h$}};
            \node[draw = black] at (3, 0) (c) {\makecell{$\tilde{Y}$}};
			\node[draw = black] at (3,-3) (d) {\makecell{$X^g$}};
			\draw [->] (a)--(b);
   			\path (a) -- node[midway, left] (text3) {$h$} (b);
      		\draw [->] (a)--(d);
            \draw [->] (c)--(d);
   			\path (c) -- node[midway, right] (text3) {$g$} (d);
            \draw [->] (d)--(b);
   			\path (d) -- node[midway, below] (text3) {$\dd\bb{P}^h / \dd\bb{P}^g$} (b);
		\end{tikzpicture}
		\caption{Geometric: $X$ is conditioned using $h$ to obtain $X^h$. Since $h$ is intractable, we derive $g$ from an auxiliary process $\tilde{Y}$, a Brownian motion on a comparison manifold $M$, and use it to define $X^g$. The tractable likelihood ratio $\dd\bb{P}^{h}/\dd\bb{P}^{g}$ allows for evaluation of $\bb{E} f(X^h)$.}
		\end{subfigure}
  \caption{Diagrams sketching the approach for conditioning diffusion processes.}
  \label{fig:tikzdiagram-manifolds}
  \end{figure}
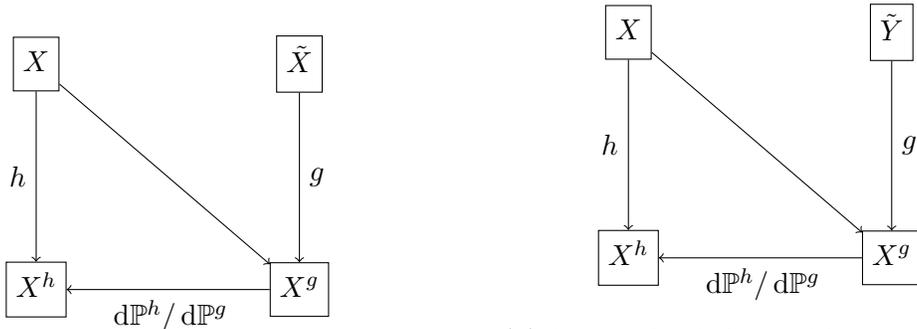

    Our approach is applicable for compact manifolds $N$ diffeomorphic to a manifold on which the heat kernel is known in closed form. This includes many cases such as Euclidean spaces with non-standard Riemannian metrics, spheres and ellipsoids, hyperbolic spaces, and tori with different metrics (flat, embedded).

    The main results are \Cref{thm:absolute-continuity-drift} and  \Cref{thm:invariance-of-GPs}.  We illustrate our results in a statistical application where we estimate a parameter in the vector field $V$ from discrete-time observations of the process. 
    Compared to earlier work this has the following advantages: {\it (i)} the guiding term is  smooth  and thereby avoids analytical and numerical issues caused by discontinuities at the cut locus of the target;
         as a consequence of this, the numerics for computing the likelihood ratio between true conditioned process and guided process is easier as we avoid computing local times; {\it (ii)}
 no  Lyapunov condition (that is hard to verify) needs to be imposed.

\subsection{Outline}
     In \Cref{sec:SDEmanifolds}, we give some background on SDEs on manifolds. All main results of the paper are stated in Sections \ref{sec:mainresults} and \ref{sec:comparison}. We detail the approach taken in this paper in \Cref{sec:mainresults} in the specific case that $\Phi$ is the identity. In \Cref{sec:comparison} we extend this result to a case where $\Phi$ is a diffeomorphism between manifolds $M$ and $N$ and the heat kernel is known on $N$. In \Cref{sec:numerical}, we illustrate the usefulness of our results in numerical simulations of diffusion bridges and a parameter estimation problem for a discretely observed diffusion process on the torus and the Poincar\'e disk. In \Cref{sec:extensions}, we show how our approach can be extended to other types of conditionings at a fixed future time. Finally, all proofs are gathered in \Cref{sec:proofs}.

\section{Background on SDEs on manifolds}
\label{sec:SDEmanifolds}
We consider SDEs on manifolds as developments of $\bb{R}^d$-valued semimartingales. We sketch the main concepts here. For a detailed description, we refer to \cite{Hsu2002, emeryStochasticCalculusManifolds1989}. With $M$ a $d$-dimensional Riemannian manifold and $x\in M$, the tangent space $T_xM$ to $M$ at $x$ is a $d$-dimensional vector space. A \textit{frame} $u$ is a basis for $T_xM$, which can be seen as an isomorphism $u\colon \bb{R}^d \to T_xM$ where an $\bb{R}^d$-valued vector is mapped to $T_xM$ as a linear combination of the basis vectors in $u$. The collection of all frames at $x$ is denoted by $\mathrm{F}_xM$ and the vector bundle with fibers $\mathrm{F}_xM$ is called the \textit{frame bundle} $ \mathrm{F}M$. The canonical projection, which maps a frame $u$ at $x$ to $x$, is denoted by $\pi\colon \FM \to M$. In short, each element of $\FM$ can be interpreted as a tuple $u=(x,\nu)$ with $x\in M$ and $\nu = (\nu_1,\dots,\nu_d)$ where $\nu_1,\dots,\nu_d\in T_xM$ form a basis for the vector space $T_xM$. The canonical projection $\pi$ is simply the map $(x,\nu)\mapsto x$. The development $\bb{R}^d$-semimartingales onto $M$ refers to a a map that first maps the semimartingale to $\mathrm{F}M$ and then projects it onto $M$. The reason why this is convenient is that the increment $dZ_t$ of an $\bb{R}^d$-semimartingale is in $\bb{R}^d$ and can thus, through a frame, be mapped to the tangent space of $M$, which then gives a direction for the $M$-valued process to move to.

It can be shown that $\FM$ is a manifold itself of dimension $d+d^2$. It has a submanifold $\OM$ consisting of all the orthonormal frames, i.e. frames $(x,\nu)$ of which the vectors in $\nu$ are orthonormal with respect to the Riemannian inner product in $T_xM$. Given $u\in \mathrm{F}M$, we have to study the tangent space $T_u\FM$ in order to make sense of stochastic increments. Since the tangent space of a manifolds consists of derivatives of curves on the manifold, we consider $\FM$-valued curves first. A curve in $\FM$ can be decomposed in a horizontal part, which corresponds parallel transport of the basis along the projection onto $M$, and a vertical part, which is the part where the projection stays constant but only the basis changes over time. This decomposition extends to the derivative of a curve and therefore, for $u\in \FM$, $T_u\FM$ can be written as a direct sum $T_u\FM=\mathcal{H}_u\FM \oplus \mathcal{V}_u\FM$ of the horizontal and vertical subspaces, respectively. The projection $\FM\to M$ can be extended to a projection $T_u\FM \to T_{\pi u}M$, but what is interesting is that, when restricting ourselves to $\mathcal{H}_u\FM$, this projection is bijective, and can thus be inverted to a ``lift" $T_xM \to T_u\FM$, where $u$ is a frame with $\pi u=x$. It is therefore very useful to look at the horizontal subspace, as the vertical part of the process is irrelevant with us mainly being interested in the projection onto $M$. The horizontal subspace is a vector space of dimension $d$ with a basis $H_1(u),\dots, H_d(u)$ consisting of the globally defined ``horizontal fields" $H_1,\ldots,H_d$ that in local coordinates at $u = (x^i, \nu^i_j)$ are given by 
\[ H_i(u) = \nu^j_i \frac{\partial}{\partial x^j} - \nu^j_i \nu^l_m \Gamma^k_{jl}\frac{\partial}{\partial \nu^k_m}, \]
where  $\Gamma^k_{jl}$ denote Christoffel symbols on $M$. An $\bb{R}^d$-valued semimartingale $Z$ can be uniquely mapped to a horizontal process $U$ via the Stratonovich SDE 
\[ \dd U_t = H_i(U_t)\circ\dd Z_t^i.\]
The process $U$ is called the development of $Z$ and the projection $\pi(U)$ is the development of $Z$ on $M$.
The development is invertible in the sense that, for a horizontal semimartingale $U$, a unique $\bb{R}^d$-valued semimartingale $Z$ exists so that $U$ is the development of $Z$. This $Z$ is denoted the \textit{anti-development} of $U$. It can be shown that if $W$ is an $\bb{R}^d$-valued Brownian motion and $U_0=u_0\in\OM$, then $\pi(U)$ is Brownian motion in $M$. 

If $X$ is a semimartingale on $M$, a horizontal semimartingale $U$ such that $\pi(U)=X$ is referred to as a horizontal lift of $X$. For a function $f \colon M\to \bb{R}$, we denote by $\tilde{f}\colon\FM \to\bb{R}$ the function $f\circ\pi$. For $u\in \FM$, a vector $V\in T_{\pi(u)}M$ can be uniquely lifted to $\mathcal{H}_u\FM$ using the horizontal vector fields. This vector is then denoted by $\tilde{V}$. In particular, for a function $f\colon M\to\bb{R}$, we denote the lift of $\nabla f$ by $\nabla^H \tilde{f} = \left\{ H_i\tilde f , \dots, H_d\tilde{f} \right\}$ and refer to it as the horizontal gradient.

 \section{Guided processes on manifolds}
	\label{sec:mainresults}
In this section, we introduce guided processes taking values on a manifold $M$. We here assume that on $M$ a closed form expression for the heat kernel, possibly a series expansion, is known. In \Cref{sec:comparison}, we treat the more general situation of a manifold $N$ being diffeomorphic to $M$. 
    
	Throughout, we assume that we are working on a probability space $(\Omega,\mathcal{F}, \bb{P})$. Let $X$ be an $M$-valued Markov process with infinitesimal generator $\scr{L}$  and denote $\scr{A} = \partial_t+\scr{L}$. Let $\mathcal{F}_t = \sigma\left(\left\{X_s : s\leq t\right\}\right)$ be the natural filtration induced by $X$ and denote the restriction of $\bb{P}$ to $\mathcal{F}_t$ by $\bb{P}_t = \left.\bb{P}\right\rvert_{\mathcal{F}_t}$. We denote by $\mathcal{M}(M)$ the space of measurable functions from $[0,T)\times M$ to $\bb{R}$. 
      Define $\{E_t^f(X)\colon t \in [0,T)\}$ by
    \begin{equation}\label{eq:def-Mtf-Etf}		
        E_t^f(X) = \frac{f(t,X_t)}{f(0,X_0)}\exp\left(-\int_0^t \frac{\scr{A}f(s,X_s)}{f(s,X_s)}\dd s\right) 
    \end{equation}
    and let
        \[  \mathcal{H} = \left\{f\in\mathcal{M}(M)\: \colon f>0\text{ and }\{E_t^f(X)\,\colon t \in [0,T)\}\text{ is a martingale}\right\} .\]
		For $f\in\mathcal{H}$, the change of measure
	\begin{equation}
		\label{eq:change-of-measure}
		\der{\bb{P}_t^f}{\bb{P}_t}=E_t^f(X), \quad t \in [0,T)
	\end{equation}
 defines a family of probability measures $\left\{\bb{P}^f_t,\, t\in[0,T)\right\}$. Since \eqref{eq:change-of-measure} uniquely determines a family of consistent finite dimensional distributions, Kolmogorov's extension theorem implies that there exists a unique probability measure $\bb{P}^f$ such that, for all $t$, $\left.\bb{P}^f\right\rvert_{\mathcal{F}_t}=\bb{P}_t^f$. 
	We refer to the measure $\bb{P}^f$ as the measure induced by $f$. Expectations with respect to $\bb{P}^f$ are denoted by $\bb{E}^f$.

Specific choices of $f$ correspond to conditioning the process $X$ on certain events. Harmonic functions $h \in \mathcal{H}$ with the additional property $\mathfrak{A}h = 0$ play an important role. For such $h$, $\mathbb{P}^h$ is called Doob's $h$-transform of $\PP$.

We concentrate on conditioning the process on the event $
\{X_T=x_T\}$ with $x_T \in M$, postponing other types of conditioning to \Cref{sec:extensions}.  
We assume  the transition kernel of $X$ admits a  density $p$ with respect to the volume measure $\dd \mathrm{Vol}_M(x)$ on $M$, i.e.\ $\bb{P}\left(X_t \in A \mid X_s=x\right) = \int_A p(s,x;t,y)\dd \mathrm{Vol}_M(y)$ for $s\leq t$, $A\subseteq M$ measurable and $x\in M$. 
	Then,  $h(t, x) = p(s,t;T,x_T)$ satisfies  $\mathfrak A h= 0$ and induces the process $ \left(X\mid X_T=x_T\right)$ (see e.g Theorem 5.4.1 of \cite{Hsu2002}).

	Since transition densities for diffusions on manifolds are rarely known in closed form, we use the results of \cite{corstanje2021conditioning} to work around not knowing $h$ by replacing it with a tractable substitute, which we will denote by $g$.  This gives the following result.
  \begin{prop}\label{prop:likelihood}
  		Suppose $h\in\mathcal{H}$ is such that $\scr{A}h=0$ and let $g\in\mathcal{H}$. Then, for $t<T$,  $\bb{P}_t^h \ll \bb{P}_t^g$ with
		\begin{equation}
			\label{eq:absolute-continuity-halfopen}
			\der{\bb{P}^h_t}{\bb{P}^g_t} = \frac{E_t^h(X)}{E_t^g(X)} = \frac{g(0,X_0)}{h(0,X_0)}\frac{h(t,X_t)}{g(t,X_t)}\exp\left(\int_0^t \frac{\scr{A}g(s,X_s)}{g(s,X_s)}\dd s\right).
		\end{equation} 
  Under $\bb{P}^g$, the horizontal lift of $X$ satisfies the SDE
  \begin{equation} 
  \label{eq:SDE-conditioned-process}
    \dd U_t = \tilde{V}(U_t) \dd t + \nabla^H \log\tilde g(t, U_t) \dd t + H_i(U_t)\circ\dd W_t^i ,
    \end{equation}
    where we again used Einstein's summation convention. Alternatively, one could also write $U$ under $\bb{P}^h$ as the stochastic development of an $\bb{R}^d$-valued process through 
     \[ \dd U_t = H_i(U_t) \circ \left[ U_t^{-1}\tilde {V}(U_t)\dd t + U_t^{-1}\nabla^H \log\tilde{g}(t, U_t) \dd t + \dd W_t^i \right]\]
  \end{prop}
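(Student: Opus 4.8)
The plan is to establish the three assertions in turn: the Radon--Nikodym identity abstractly from the definitions, then the form of the SDE under $\bb{P}^g$ by recognising $E^g(X)$ as a Girsanov density for \eqref{eq:SDE-UV}, and finally the development representation by expanding the horizontal vectors in the frame.

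\emph{Absolute continuity and \eqref{eq:absolute-continuity-halfopen}.} Since $g>0$, the functional $E^g_t(X)$ in \eqref{eq:def-Mtf-Etf} is strictly positive $\bb{P}$-a.s., so $\bb{P}^g_t$ and $\bb{P}_t$ are mutually equivalent with $\der{\bb{P}_t}{\bb{P}^g_t}=1/E^g_t(X)$. Because $h\in\mathcal{H}$ gives $\bb{P}^h_t\ll\bb{P}_t$ with density $E^h_t(X)$, the chain rule yields $\bb{P}^h_t\ll\bb{P}^g_t$ and $\der{\bb{P}^h_t}{\bb{P}^g_t}=E^h_t(X)/E^g_t(X)$; substituting \eqref{eq:def-Mtf-Etf} for $h$ and for $g$ and using $\scr{A}h=0$ to delete the exponential factor coming from $h$ produces \eqref{eq:absolute-continuity-halfopen}. (This is also the specialisation of the abstract change-of-measure identity in \cite{corstanje2021conditioning} to the present setting.)

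\emph{The SDE under $\bb{P}^g$.} Let $U$ solve \eqref{eq:SDE-UV} under $\bb{P}$, so that $X=\pi(U)$, and write $\tilde g(t,u)=g(t,\pi u)$. The generator of $X$ is $\scr{L}=\tfrac12\Delta_M+V$ and, by Bochner's horizontal Laplacian identity (see \cite{Hsu2002}), $\widetilde{\scr{L}f}=\tfrac12\sum_i H_i^2\tilde f+\tilde V\tilde f$ for $f\colon M\to\bb{R}$. It\^o's formula for the Stratonovich SDE \eqref{eq:SDE-UV}, applied to $\tilde g(t,U_t)$ with the $H_i(U_t)\circ\dd W^i_t$ term converted to It\^o form, therefore gives
\[ \dd g(t,X_t)=\dd\tilde g(t,U_t)=\scr{A}g(t,X_t)\dd t+(H_i\tilde g)(t,U_t)\dd W^i_t. \]
Applying It\^o's formula to $\log\tilde g(t,U_t)$ and inserting the result into \eqref{eq:def-Mtf-Etf}, the quadratic variation of $\tilde g(\cdot,U)$ converts the integrand $-\scr{A}g/g$ into $-\tfrac12\sum_i(H_i\log\tilde g)^2$, so that
\[ E^g_t(X)=\exp\!\left(\int_0^t (H_i\log\tilde g)(s,U_s)\dd W^i_s-\tfrac12\int_0^t\sum_i (H_i\log\tilde g)^2(s,U_s)\dd s\right); \]
that is, $E^g(X)$ is the stochastic (Dol\'eans) exponential of $\int_0^{\cdot}(H_i\log\tilde g)(s,U_s)\dd W^i_s$. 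Since $g\in\mathcal{H}$ this is a genuine $\bb{P}$-martingale, so Girsanov's theorem shows that under $\bb{P}^g$ the process $W^i_t-\int_0^t(H_i\log\tilde g)(s,U_s)\dd s$ is an $\bb{R}^d$-valued Brownian motion. Substituting it into \eqref{eq:SDE-UV}, the extra drift $\sum_i(H_i\log\tilde g)(t,U_t)H_i(U_t)$ equals $\nabla^H\log\tilde g(t,U_t)$ by definition of the horizontal gradient and carries no Stratonovich correction, being of bounded variation; this is \eqref{eq:SDE-conditioned-process}.

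\emph{Development form and main obstacle.} For the final displayed equation one expands the horizontal vectors in the frame: every $\xi\in\mathcal{H}_u\FM$ satisfies $\xi=H_i(u)(u^{-1}\pi_*\xi)^i$, so applying this to $\tilde V(U_t)$ and to $\nabla^H\log\tilde g(t,U_t)$ in \eqref{eq:SDE-conditioned-process}, and using once more that the drift terms are of bounded variation (hence may be moved inside the Stratonovich integral), yields the claimed development. The substantive step is the middle one: verifying that the additive-functional form \eqref{eq:def-Mtf-Etf} of $E^g(X)$ collapses to a Girsanov exponential. This hinges on the generator-lift identity — standard for Brownian motion with drift built via the frame bundle — together with careful bookkeeping of the It\^o--Stratonovich corrections, so that the second-order contributions from $\tfrac12\sum_i H_i^2$ and from the quadratic variation of $\log\tilde g(\cdot,U)$ combine to cancel $\scr{A}g/g$ and leave exactly $-\tfrac12\sum_i(H_i\log\tilde g)^2$. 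One should also record that $X$ is well defined on all of $[0,T)$ (where compactness of $M$ helps) and that the martingale property built into $\mathcal{H}$ is precisely what licenses both the change of measure \eqref{eq:change-of-measure} and the Girsanov step.
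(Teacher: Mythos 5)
Your derivation of \eqref{eq:absolute-continuity-halfopen} is the same as the paper's: it follows directly from the definition \eqref{eq:change-of-measure} by the chain rule for Radon--Nikodym derivatives, using strict positivity of $E_t^g(X)$ and $\scr{A}h=0$ to kill the exponential from $h$. For the SDE, however, you take a genuinely different route. The paper obtains \eqref{eq:SDE-conditioned-process} by reading off the extended generator of $X$ under $\bb{P}^g$ from \Cref{thm:PR} (Palmowski--Rolski): the generator changes by the Carr\'e du Champ term $\Gamma(f,g)/g=\inner{\nabla f}{\nabla \log g}$, which identifies the added drift as $\nabla\log g$, lifted horizontally. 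You instead show directly that $E^g_t(X)$ is the Dol\'eans exponential of $\int_0^\cdot (H_i\log\tilde g)(s,U_s)\dd W^i_s$ --- using the Stratonovich-to-It\^o conversion, the Bochner identity $\sum_i H_i^2\tilde f=\widetilde{\Delta f}$ on $\OM$ (valid since $u_0$ is orthonormal and the horizontal fields are tangent to $\OM$), and It\^o's formula for $\log\tilde g$ --- and then invoke Girsanov. Both arguments are correct. Your route is longer but more constructive: it exhibits the new driving Brownian motion explicitly, which makes the passage to the anti-development form in the last display immediate, whereas the paper's route only characterises the law via the martingale problem and leaves the identification of the driving noise implicit. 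The one point worth recording explicitly in your version is that the martingale property packaged into $g\in\mathcal{H}$ is exactly what upgrades the Dol\'eans exponential from a local martingale to a true martingale, so that Girsanov applies on each $[0,t]$ with $t<T$; you do note this, so there is no gap.
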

\begin{proof}
\Cref{eq:absolute-continuity-halfopen} follows directly from \eqref{eq:change-of-measure}. The SDE for $U$ follows from its extended generator, which can be obtained from \Cref{thm:PR}. 
\end{proof}

We call the process $X$ under $\mathbb{P}^g_t$ the {\it guided process}, as an appropriate choice of $g$ should guide the process to the event we wish to condition on. 
As an intermediate result, we obtain a Feynman-Kac type formula for the intractable function $h$ in terms of a process that is amenable to stochastic simulation. 
\begin{cor}
 Let $h(t,x) = \int p(t,x;T,y)q(y)\dd \mathrm{Vol}_M(y)$ such that
 $\PP^h \ll \PP$. Then $h$ is characterised in terms of the guided process by
       \begin{equation}
			\label{eq:h}
			{h(t,x)} = \mathbb E^g\, \left[ \left.{g(t,x)}\frac{q(X_T)}{g(T,X_T)}\exp\left(\int_t^T \frac{\scr{A}g(s,X_s)}{g(s,X_s)}\dd s\right)\right| X_t = x\right].
		\end{equation} 
\end{cor}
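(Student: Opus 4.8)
The plan is to reduce \eqref{eq:h} to the $\PP$-martingale property of $s\mapsto h(s,X_s)$ combined with a change of measure from $\PP$ to $\PP^g$ on a compact subinterval $[t,s]\subset[t,T)$, and then to let $s\uparrow T$. First I would record two facts about $h(t,x)=\int p(t,x;T,y)q(y)\,\dd\mathrm{Vol}_M(y)$. Since $p(\cdot,\cdot;T,y)$ is $\scr A$-harmonic (as noted just before \Cref{prop:likelihood}), superposition over $y$ gives $\scr A h=0$; combined with the standing hypothesis, which ensures $h\in\mathcal H$, this makes $E^h_s(X)=h(s,X_s)/h(0,X_0)$ — and hence $\{h(s,X_s):s\in[0,T)\}$ — a genuine $\PP$-martingale, so that $h(t,x)=\EE[h(s,X_s)\mid X_t=x]$ for all $t\le s<T$. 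Second, because the transition kernel $p(t,x;T,\cdot)\,\dd\mathrm{Vol}_M(\cdot)$ converges weakly to the point mass $\delta_x$ as $t\uparrow T$, one has that $h$ extends continuously to $t=T$ with $h(T,\cdot)=q$, hence $h(s,X_s)\to q(X_T)$ along $\PP^g$-almost every path.

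Next I would transport the conditional $\PP$-expectation to $\PP^g$. For $s<T$ the functional $E^g_s(X)$ of \eqref{eq:def-Mtf-Etf} is strictly positive and finite, so $\PP_s$ and $\PP^g_s$ are mutually absolutely continuous; writing the multiplicative increment
\[
    E^g_{t,s}(X):=\frac{E^g_s(X)}{E^g_t(X)}=\frac{g(s,X_s)}{g(t,X_t)}\exp\!\left(-\int_t^s\frac{\scr A g(r,X_r)}{g(r,X_r)}\,\dd r\right),
\]
which depends only on the path on $[t,s]$, the Bayes rule together with the Markov property of $X$ under $\PP^g$ gives, for every nonnegative $\mathcal F_s$-measurable $Y$,
\[
    \EE[Y\mid X_t=x]=\EE^g\!\left[\,Y\,E^g_{t,s}(X)^{-1}\;\middle|\;X_t=x\,\right].
\]
Taking $Y=h(s,X_s)$ and substituting the explicit form of $E^g_{t,s}$ yields, for every $s\in[t,T)$,
\[
    h(t,x)=\EE^g\!\left[\,g(t,x)\,\frac{h(s,X_s)}{g(s,X_s)}\exp\!\left(\int_t^s\frac{\scr A g(r,X_r)}{g(r,X_r)}\,\dd r\right)\;\middle|\;X_t=x\,\right],
\]
the right-hand side being the same for all such $s$. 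Letting $s\uparrow T$, the integrand converges $\PP^g$-a.s. to the integrand of \eqref{eq:h} by the two facts above together with continuity of $r\mapsto\int_t^r(\scr A g/g)(u,X_u)\,\dd u$.

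The main obstacle is justifying the interchange of the limit $s\uparrow T$ with the expectation. The family $\Psi_s:=g(t,x)\,h(s,X_s)\,g(s,X_s)^{-1}\exp(\int_t^s(\scr A g/g)(r,X_r)\,\dd r)$, $s\in[t,T)$, is a nonnegative $\PP^g$-martingale of constant mean $h(t,x)$ — up to the deterministic factor $h(0,X_0)$ and a restart at time $t$ it is precisely the likelihood-ratio process $\dd\PP^h_s/\dd\PP^g_s$ of \Cref{prop:likelihood} — so a.s. convergence alone delivers only ``$\le$'' via Fatou. To upgrade to equality one needs $\{\Psi_s\}$ to be uniformly integrable (equivalently, the density process $\dd\PP^h_s/\dd\PP^g_s$ to close at $s=T$), and it is here that the hypothesis $\PP^h\ll\PP$ — through the identification $E^h_s(X)=h(s,X_s)/h(0,X_0)$ and \Cref{prop:likelihood} — is invoked; spelling out this uniform integrability and then concluding by Scheff\'e's (or Vitali's) lemma is the technical heart of the argument.
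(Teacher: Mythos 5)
Your proof is correct and follows what the paper intends: the corollary is stated without proof as a direct consequence of \Cref{prop:likelihood} (``integrate the Radon--Nikodym derivative with respect to $\PP^g$''), and your Bayes-rule computation on $[t,s]$ followed by $s\uparrow T$ is exactly that argument, carried out conditionally on $X_t=x$. You also correctly isolate the one genuinely nontrivial step --- uniform integrability of the density process $\Psi_s$ so that the martingale closes at $T$ --- which is precisely what the hypothesis $\PP^h\ll\PP$ (together with the implicit assumption that $g$ extends to a positive function at $t=T$) is there to supply; the paper itself leaves this unspoken.
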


 Consider $U$ as defined by \eqref{eq:SDE-UV}. 
  The process $X = \pi(U)$ has extended generator $\scr{A}$ on the domain $\scr{D}(\scr{A}) = \mathcal{C}^{1,2}([0,T)\times M)$ with
		\[ \scr{A} f = \partial_t f + {V}{f} + \frac12 \Delta f \]
       for $f\in\scr{D}(\scr{A})$ and  where $\Delta$ denotes the Laplace-Beltrami operator on $M$.

\subsection{Guiding induced by the heat kernel} 
 Let $\kappa$ be the minimal heat kernel on $M$. That is $\kappa\colon[0,T]\times M\times M \to \bb{R}$ is the unique minimal solution to the partial differential equation
		\begin{equation} 
			\label{eq:heat-equation}
			\partial_t \kappa(t,x,y) = \frac12\Delta_x\kappa(t,x,y)
		\end{equation}
	satisfying $\int_M \kappa(t,x,y)\dd \mathrm{Vol}_M(y) = 1$.  Here $\Delta_x$ denotes the Laplace-Beltrami operator applied to the $x$-argument. Notice that, $\kappa$ exists on compact manifolds, as they are stochastically complete. We define the guiding function
		\begin{equation}\label{eq:heatkernel} g(t,x)= \kappa(T-t,x, x_T) \end{equation}
  By Proposition 4.1.6 in  \cite{Hsu2002}, $g$ is the transition density function of a Brownian motion on $M$. We require $X$ to have a transition density $p$ as well. The relationship between $p$ and the heat kernel is described in \Cref{ass:transitiondensity}. 
		\begin{prop}
		\label{ass:transitiondensity}
			Let $M$ be a compact Riemannian manifold. Then $X$ admits a transition density $p$ with respect to the volume measure for which positive constants $\gamma_1,\gamma_2,\gamma_3$ exists such that for all $s\leq t$ and $x,y\in M$, 
			\[ p(s,x;t,y) \leq \gamma_1 \kappa(\gamma_2(t-s), x,y)e^{-\gamma_3 (t-s)} \]
		\end{prop}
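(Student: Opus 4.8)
The plan is to combine a Girsanov change of measure with a comparison to the Brownian bridge on $M$. Since $M$ is compact and $V$ is smooth, $C:=\sup_M|V|<\infty$. Let $X^0=\pi(U^0)$ be a Brownian motion on $M$ started at $x$, with $U^0$ solving $\dd U^0_t=H_i(U^0_t)\circ\dd W^i_t$ for a standard $\bb{R}^d$-Brownian motion $W$; its transition density is the heat kernel $\kappa$ (Proposition~4.1.6 in \cite{Hsu2002}). The anti-development of $X$ in \eqref{eq:SDE-UV} differs from that of $X^0$ by the bounded adapted drift $s\mapsto(U^0_s)^{-1}V(X^0_s)$, so the exponential
\[
 D_t=\exp\!\left(\int_0^t\langle(U^0_s)^{-1}V(X^0_s),\,\dd W_s\rangle-\tfrac12\int_0^t|V(X^0_s)|^2\,\dd s\right)
\]
has quadratic variation at most $C^2t$; Novikov's condition holds trivially, $D$ is a true martingale, and the law of $X$ up to any time $t$ is equivalent to that of $X^0$ with density $D_t$. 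Hence $X$ inherits a (smooth, positive) transition density $p$ from $\kappa$ --- this also follows from parabolic theory for the uniformly elliptic operator $\tfrac12\Delta+V$ with smooth coefficients on the compact $M$ --- and disintegrating $\bb{E}[f(X_t)\mid X_0=x]=\bb{E}[D_tf(X^0_t)\mid X^0_0=x]$ along the terminal value, together with time-homogeneity, gives
\[
 p(s,x;t,y)=\kappa(t-s,x,y)\,\bb{E}\bigl[D_{t-s}\mid X^0_0=x,\ X^0_{t-s}=y\bigr],
\]
the conditional law being that of the Brownian bridge on $M$.

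It therefore suffices to bound $r_\tau(x,y):=\bb{E}[D_\tau\mid X^0_0=x,X^0_\tau=y]$ above, uniformly over $x,y\in M$ and $\tau\in(0,T]$, by $C_0\,\kappa(\gamma_2\tau,x,y)/\kappa(\tau,x,y)$ for some constant $C_0$ and some $\gamma_2\ge1$: since $t-s$ ranges over the bounded interval $(0,T]$, feeding this into the previous display and enlarging the constant lets one insert any factor $e^{-\gamma_3(t-s)}$, $\gamma_3>0$. Since $-\tfrac12\int_0^\tau|V|^2\,\dd s\le0$ we have $D_\tau\le\exp(\int_0^\tau\langle(U^0_s)^{-1}V(X^0_s),\dd W_s\rangle)$. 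In the special case $V=\nabla\phi$ with $\phi\in C^\infty(M)$, Itô's formula rewrites the stochastic integral as $\phi(X^0_\tau)-\phi(X^0_0)-\tfrac12\int_0^\tau\Delta\phi(X^0_s)\,\dd s$, so with $q:=-\tfrac12(\Delta\phi+|\nabla\phi|^2)$ (bounded on $M$) one obtains, under the bridge,
\[
 r_\tau(x,y)=e^{\phi(y)-\phi(x)}\,\bb{E}\bigl[e^{\int_0^\tau q(X^0_s)\,\dd s}\mid X^0_0=x,\ X^0_\tau=y\bigr]\le e^{2\|\phi\|_\infty}\,e^{\tau\|q\|_\infty},
\]
uniformly in $x,y$, which is the required bound with $\gamma_2=1$.

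For a general smooth vector field $V$ I would instead invoke the classical Gaussian \emph{upper} estimate for the drifted heat kernel --- obtained from the parametrix/Duhamel expansion for $\tfrac12\Delta+V\cdot\nabla$ on the compact $M$, or equivalently from Aronson/Li--Yau-type arguments --- namely $p(s,x;t,y)\le c_1(t-s)^{-d/2}\exp(-c_2\,\dist(x,y)^2/(t-s))$. Combined with the two-sided Gaussian bounds for the heat kernel $\kappa$ on the compact manifold $M$, and choosing $\gamma_2$ large enough that the Gaussian exponent of $\kappa(\gamma_2(t-s),x,y)$ dominates $-c_2\,\dist(x,y)^2/(t-s)$, this yields $p(s,x;t,y)\le\gamma_1\kappa(\gamma_2(t-s),x,y)e^{-\gamma_3(t-s)}$ after absorbing polynomial prefactors and constants.

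\textbf{Main obstacle.} The Girsanov step and the existence of $p$ are routine; the real content is the uniform-in-$(x,y)$ control of the Brownian-bridge expectation of $D_\tau$. For gradient drifts this is the elementary Itô computation above, but for a general smooth $V$ it is essentially equivalent to a Gaussian upper bound for the transition density of $\tfrac12\Delta+V\cdot\nabla$, and I expect the cleanest route is to quote (or lightly adapt) the standard parametrix/perturbation estimates rather than to control the bridge integral $\int_0^\tau\langle V(X^0_s),\nabla\log\kappa(\tau-s,X^0_s,y)\rangle\,\dd s$ by hand, since $\nabla\log\kappa$ has a mild singularity as $s\uparrow\tau$.
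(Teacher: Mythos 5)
Your argument is correct in substance, but it takes a genuinely different (and longer) route than the paper: the paper's entire proof is a one-line appeal to Equation (1.2) of Sturm (1993), which, for drift perturbations of the Laplacian on a manifold with Ricci curvature bounded below (automatic here by compactness), directly delivers the comparison $p(s,x;t,y)\le\gamma_1\kappa(\gamma_2(t-s),x,y)e^{-\gamma_3(t-s)}$. Your Girsanov-plus-bridge decomposition $p(s,x;t,y)=\kappa(t-s,x,y)\,r_{t-s}(x,y)$ is valid (Novikov is immediate for bounded $V$ on compact $M$), and the gradient-drift case is a genuinely self-contained, elementary proof with $\gamma_2=1$ and explicit constants --- something the citation does not give. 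For general $V$, however, you ultimately import an Aronson/parametrix-type Gaussian upper bound for the drifted kernel and splice it with Hsu's two-sided bounds on $\kappa$; that is the same class of classical estimate the paper quotes, so the hard analytic content is delegated in both proofs, and your honest assessment of this in the final paragraph is accurate. Two minor points: (i) your restriction to $t-s\in(0,T]$ is not merely convenient but necessary --- on a compact manifold the left side tends to the (positive) invariant density as $t-s\to\infty$ while the right side tends to $0$, so the stated inequality can only hold on a bounded time horizon, which is how it is used in the paper; (ii) when you invoke the lower Gaussian bound for $\kappa(\gamma_2\tau,\cdot,\cdot)$ with $\gamma_2>1$, the time argument may leave $(0,T)$, so one should fix the constants on the enlarged horizon $[0,\gamma_2T]$ (harmless, since the constants in such bounds depend only on the horizon).
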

   \begin{proof} 
   This result follows from Equation (1.2) of \cite{sturm1993} if $M$ has Ricci curvature bounded from below, which is the case for compact manifolds. 
   \end{proof}
   The following theorem gives sufficient conditions for equivalence of the measures $\bb{P}^h_T$ and $\bb{P}^g_T$. It is key to our numerical simulation results presented in \Cref{sec:numerical}.

		\begin{thm}
    \label{thm:absolute-continuity-drift}
		Assume $M$ is a compact manifold and $V$ is a smooth vector field on $M$. Let $h\in\mathcal{H}$ be such that $\scr{A}h=0$ and let $g$ be as in \eqref{eq:heatkernel}. 
        Then $\bb{P}_T^h \sim \bb{P}_T^g$ with 
		\begin{equation}
            \label{eq:radon-nikodym-manifolds}
		 \der{\bb{P}_T^h}{\bb{P}^g_T} = \frac{g(0,x_0)}{h(0,x_0)} \Psi_{T,g}(X),	
		\end{equation}
		where 
		\begin{equation}\label{eq:Psi}\Psi_{T,g}(X) = \exp\left(\int_0^T V\left(\log g\right)(s, X_s) \dd s \right).  \end{equation}
		\end{thm}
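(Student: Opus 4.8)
The plan is to pass to the limit $t\uparrow T$ in \Cref{prop:likelihood} and show that the Radon--Nikodym derivative in \eqref{eq:absolute-continuity-halfopen} converges, both almost surely and in $L^1(\bb{P}^g)$, to the quantity on the right-hand side of \eqref{eq:radon-nikodym-manifolds}. The key algebraic simplification is that with $g$ the heat kernel as in \eqref{eq:heatkernel}, one has $\scr{A}g = \partial_t g + Vg + \tfrac12\Delta g$, and since $g(t,x)=\kappa(T-t,x,x_T)$ solves the backward heat equation $\partial_t g = -\tfrac12\Delta g$, the terms $\partial_t g$ and $\tfrac12\Delta g$ cancel, leaving $\scr{A}g(s,X_s) = Vg(s,X_s)$, hence $\scr{A}g/g = V(\log g)$. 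Therefore \eqref{eq:absolute-continuity-halfopen} reads
\[
\der{\bb{P}^h_t}{\bb{P}^g_t} = \frac{g(0,X_0)}{h(0,X_0)}\,\frac{h(t,X_t)}{g(t,X_t)}\,\Psi_{t,g}(X),
\]
so the whole problem reduces to controlling the ratio $h(t,X_t)/g(t,X_t)$ as $t\uparrow T$.

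First I would argue that $h(t,X_t)/g(t,X_t)\to 1$ as $t\uparrow T$, $\bb{P}^g$-almost surely. Under $\bb{P}^g$ the process $X$ is a guided process that hits $x_T$ at time $T$ (this is exactly the role of $g$ as a transition density of Brownian motion pinned at $x_T$); one needs the local behaviour of both $h$ and $g$ near the diagonal. Both $h(t,\cdot)$ and $g(t,\cdot)=\kappa(T-t,\cdot,x_T)$ are, to leading order as $t\uparrow T$, Gaussian-type kernels concentrating at $x_T$, and the short-time asymptotics of the heat kernel (Varadhan-type estimates, or the Minakshisundaram--Pleijel expansion on a compact manifold) give $h(t,y)/g(t,y)\to 1$ uniformly for $y$ in a neighbourhood of $x_T$; since $X_t\to x_T$ under $\bb{P}^g$, this yields the pointwise limit. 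Then I would establish uniform integrability of the family $\{\der{\bb{P}^h_t}{\bb{P}^g_t} : t<T\}$ under $\bb{P}^g$: here is where \Cref{ass:transitiondensity} enters, giving the upper bound $p(t,X_t;T,x_T)\le \gamma_1\,\kappa(\gamma_2(T-t),X_t,x_T)e^{-\gamma_3(T-t)}$, which controls $h(t,X_t)$ by a constant multiple of a reparametrised heat kernel; combined with the fact that $\Psi_{t,g}(X)\le \exp(T\|V(\log g)\|_\infty)$ is \emph{not} bounded (since $V\log g$ blows up near $x_T$) one has to be a bit careful — but $V(\log g)$ only blows up like $\dist(X_s,x_T)/(T-s)$, which is integrable along the guided path, so $\Psi_{t,g}(X)$ does converge and stays controlled. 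Putting the a.s.\ convergence together with uniform integrability, Scheffé/Vitali gives $L^1$ convergence, so $E^h_t(X)/E^g_t(X)$ is a closed (hence uniformly integrable) $\bb{P}^g$-martingale on $[0,T]$, its terminal value is $\frac{g(0,x_0)}{h(0,x_0)}\Psi_{T,g}(X)$, and $\bb{P}^h_T\ll\bb{P}^g_T$ with the claimed density. Equivalence $\bb{P}^h_T\sim\bb{P}^g_T$ then follows because the density is strictly positive $\bb{P}^g$-a.s.\ (it is a product of a positive constant and $\Psi_{T,g}(X)>0$), and one checks the reverse density $\der{\bb{P}^g_T}{\bb{P}^h_T}$ is $\bb{P}^h_T$-integrable, e.g.\ from $\Psi_{T,g}(X)^{-1}=\exp(-\int_0^T V(\log g)\dd s)$ being bounded below by an integrable-exponent quantity, or more cleanly by symmetry of the roles of the two absolutely continuous pieces.

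The main obstacle I expect is the interchange of the limit $t\uparrow T$ with the expectation, i.e.\ the uniform integrability step, because the guiding term $V(\log g)$ is genuinely singular at the endpoint and the naive bound on $\Psi_{t,g}$ diverges. The resolution is to combine the Gaussian upper bound from \Cref{ass:transitiondensity} on $h$ with the explicit near-diagonal form of $g$ so that the ratio $h(t,X_t)/g(t,X_t)$ is bounded by $\gamma_1 e^{-\gamma_3(T-t)}$ times a ratio of heat kernels at different time scales which, by the heat-kernel bounds, is uniformly bounded along the (concentrating) guided path; one then needs that $\EE^g$ of this bound times $\Psi_{t,g}(X)$ stays finite, which follows once one shows $\int_0^T |V(\log g)(s,X_s)|\dd s<\infty$ $\bb{P}^g$-a.s.\ with sufficient integrability — essentially a statement that the guided bridge spends integrable time near the cut locus/endpoint, controlled again via the heat kernel asymptotics $\log g(s,x)\sim -\dist(x,x_T)^2/(2(T-s))$ and standard bridge estimates. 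Everything else (the cancellation giving $\scr{A}g/g=V\log g$, the application of \Cref{prop:likelihood}, positivity for equivalence) is routine.
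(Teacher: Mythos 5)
Your skeleton is the right one — the cancellation $\scr{A}g/g = V(\log g)$ and the strategy of passing to the limit $t\uparrow T$ in \Cref{prop:likelihood} are exactly how the paper proceeds (it routes the limit through \Cref{thm:absolute-continuity}) — but the two steps you assert are precisely where all the work lies, and as stated neither goes through. First, you claim $h(t,X_t)/g(t,X_t)\to 1$ $\bb{P}^g$-a.s.\ from near-diagonal heat-kernel asymptotics. Varadhan-type estimates control only logarithms, and $h(t,\cdot)=p(t,\cdot\,;T,x_T)$ is the kernel of the process \emph{with drift} $V$ while $g$ is the driftless heat kernel; making the ratio (not its log) converge to $1$ along a random path approaching the diagonal would require a parametrix expansion for the drift kernel with uniform error control, which you do not supply. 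The paper never proves pointwise convergence of this ratio: condition \ref{ass:main-fraction} of \Cref{thm:absolute-continuity} only requires $\bb{E}^h\left[(g/h)(t,X_t)\ind_{A_k^t}\right]\to 1$, which is obtained from the martingale identity $\bb{E}\left(g(t,X_t)\mid\mathcal{F}_s\right)\to h(s,X_s)$ (\Cref{lem:limitfraction}) together with a stopping-time and Chapman--Kolmogorov estimate that kills the contribution of $(A_k^t)^c$ — a much weaker statement that sidesteps ratio asymptotics entirely.

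Second, and more seriously, you take as given that ``under $\bb{P}^g$ the process hits $x_T$ at time $T$'' and that $\int_0^T\abs{V(\log g)(s,X_s)}\dd s<\infty$ along guided paths by ``standard bridge estimates.'' Neither is an input: $X$ under $\bb{P}^g$ is not a Brownian bridge (it carries the drift $V$ on top of the guiding term), and the a.s.\ concentration $\dist(X_t,x_T)\leq C(T-t)^{(1-\eps)/2}$ — which is what makes $\int_0^T \dist(X_s,x_T)/(T-s)\dd s$ finite and hence $\Psi_{T,g}(X)$ well defined — is exactly condition \ref{ass:P(AT)=1}, proved in the paper as \Cref{thm:probability1} via an It\^o expansion of $\xi_t=-\eta(t)\log g(t,X_t)-\beta_1(t)$, an exponential-martingale bound, and a generalized Gr\"onwall inequality. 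Without that estimate your uniform-integrability step has no starting point, so the proposal is circular at its crux. The algebraic identity $\scr{A}g/g=V(\log g)$, the intended use of \Cref{ass:transitiondensity} and \Cref{thm:upper-bound-gradlogg}, and the positivity argument for upgrading $\ll$ to $\sim$ are all correct.
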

            The proof is deferred to \Cref{app:proof-of-mainthm}. 
            Upon integration of \eqref{eq:radon-nikodym-manifolds} with respect to $\bb{P}^g$, we obtain an expression for the transition density as
            \begin{equation}\label{eq:likelihood} p(0,x_0 ; T,x_T) = g(0,x_0)\bb{E}^g \Psi_{T,g}(X).
            \end{equation}
		
 \section{Guiding with comparison manifolds}
 \label{sec:comparison}
 
  The assumption that the heat kernel is known in closed form is restrictive. Suppose $N$ is a manifold for which there exists a diffeomorphism $\Phi\colon M\to N$. Assuming that the heat kernel on $M$ is tractable, we will construct a guided process on $N$ by mapping the guiding term from $M$ to $N$.

Let $X=\pi(U)$ be an $M$-valued stochastic process, with $U$ defined by \eqref{eq:SDE-UV}. \Cref{thm:absolute-continuity-drift} shows how samples from $X$ under $\bb{P}^h$ can be weighted to obtain samples from $\bb{P}^g$. Denote the process $X$ under $\bb{P}^h$ by $X^h$. 	 
  Define the $N$-valued process $Y$ by $Y_t = \Phi(X_t)$ and denote the law of $Y$ by $\bb{Q}$. Define  $\Phi_* h \in \mathcal{M}(N)$  by $(\Phi_* h)(t,y)=h(t, \Phi^{-1}(y))$.

The theorem below shows that first $h$-transforming $X$ and then mapping it to $N$ through $\Phi$ results in the same process (in law) as $h$-transforming the process $\Phi(X)$ with $\Phi_* h$. In other words, the diagram in \Cref{fig:diagram} commutes. 
\begin{thm}\label{thm:invar} 
Let $h \in \mathcal{H}$ and take $t <T$.
	 For $\psi \colon N\to \mathbb{R}$ bounded and $\mathcal{F}_t$-measurable
\[ \bb{E}\psi(\Phi(X^h)) = \bb{E}\left[\psi(Y)E_t^{\Phi_* h}(Y)\right]\]
\end{thm}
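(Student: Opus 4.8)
The plan is to reduce the claim to the definition of the $h$-transform in \eqref{eq:change-of-measure} together with the fact that $\Phi$ is a bijection, so that pushing forward the measure commutes with the change-of-measure functional $E^{\cdot}_t$. First I would note that $Y = \Phi(X)$ generates the same filtration as $X$ up to relabelling, i.e.\ $\sigma(\{Y_s : s \le t\}) = \sigma(\{X_s : s \le t\}) = \mathcal{F}_t$, since $\Phi$ and $\Phi^{-1}$ are measurable; hence $\psi(Y)$ being $\mathcal{F}_t$-measurable is the natural hypothesis and everything lives on the same probability space. The law of $Y$ is $\bb{Q} = \Phi_* \bb{P}$ in the sense that $\bb{E}\phi(Y) = \bb{E}\phi(\Phi(X))$ for bounded measurable $\phi$.

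The key computational step is to show the pointwise identity
\[
E_t^{\Phi_* h}(Y) = E_t^{h}(X) \qquad \text{a.s.}
\]
To see this, write out the left-hand side using \eqref{eq:def-Mtf-Etf}: it equals
\[
\frac{(\Phi_* h)(t,Y_t)}{(\Phi_* h)(0,Y_0)}\exp\left(-\int_0^t \frac{\scr{A}^{Y}(\Phi_* h)(s,Y_s)}{(\Phi_* h)(s,Y_s)}\dd s\right),
\]
where $\scr{A}^{Y}$ is the (space-time) generator of $Y$. Since $(\Phi_* h)(s,Y_s) = h(s, \Phi^{-1}(Y_s)) = h(s, X_s)$, the prefactor and the denominator in the integrand already match those of $E_t^h(X)$. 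For the numerator in the integrand one uses the intertwining of generators under the diffeomorphism: the generator of $Y = \Phi(X)$ acts on a test function $\phi$ on $N$ by $\scr{L}^{Y}\phi = \scr{L}^{X}(\phi\circ\Phi)\circ\Phi^{-1}$, and correspondingly $\scr{A}^{Y}(\Phi_* h) = \big(\scr{A}^{X} h\big)\circ(\mathrm{id}\times\Phi^{-1})$ — this is just the change-of-variables / conjugation formula for diffusion generators, valid because $\Phi$ is a diffeomorphism and $h \in \mathcal{M}(M)$ is regular enough to lie in $\mathcal{H}$. Evaluating along the path gives $\scr{A}^{Y}(\Phi_* h)(s,Y_s) = \scr{A}^{X} h(s, X_s)$, so the exponential terms agree as well, proving the identity. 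In particular $\Phi_* h \in \mathcal{H}$ for the process $Y$ because $E_t^{\Phi_* h}(Y) = E_t^{h}(X)$ is a $\bb{P}$-martingale by hypothesis on $h$, and $Y$ and $X$ generate the same filtration.

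With the identity in hand the conclusion is immediate: by definition of $X^h$ as $X$ under $\bb{P}^h$ and the change of measure \eqref{eq:change-of-measure},
\[
\bb{E}\psi(\Phi(X^h)) = \bb{E}\big[\psi(\Phi(X))\, E_t^h(X)\big] = \bb{E}\big[\psi(Y)\, E_t^{\Phi_* h}(Y)\big],
\]
using that $\psi(\Phi(X^h))$ is bounded and $\mathcal{F}_t$-measurable so only $\bb{P}_t^h$ is needed. The main obstacle I anticipate is the generator-conjugation step: one must be careful that $\Phi_* h$ has enough regularity for $\scr{A}^{Y}(\Phi_* h)$ to make sense, and that the domain conditions defining membership in $\mathcal{H}$ transport correctly along $\Phi$ — but since $\Phi$ is a diffeomorphism this is essentially bookkeeping, and the martingale property transfers for free because the two functionals coincide path-by-path. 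A secondary point to state carefully is that the claim is about finite-dimensional distributions up to time $t<T$, so no limiting argument at $T$ is needed here (that is handled separately, e.g.\ in \Cref{thm:absolute-continuity-drift}).
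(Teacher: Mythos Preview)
Your proposal is correct and follows essentially the same route as the paper: establish the generator conjugation $\scr{A}_Y(\Phi_* h)(s,Y_s) = \scr{A}_X h(s,X_s)$, deduce the pathwise identity $E_t^{\Phi_* h}(Y) = E_t^h(X)$, and then apply the change of measure \eqref{eq:change-of-measure}. The paper derives the conjugation formula directly from the limit definition of the infinitesimal generator rather than invoking it as a known fact, but the substance and structure of the argument are the same.
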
 
The proof is deferred to \Cref{subsec:proofinvariance1}. 

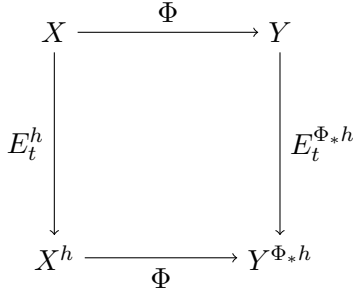
\begin{figure}
	\centering
		\begin{tikzpicture}
			\node at (0,0) (a) {$X$};
			\node at (3,0) (b) {$Y$};
			\node at (0,-3) (c) {$X^h$};
			\node at (3,-3) (d) {$Y^{\Phi_* h} $};
			\path (a) -- node[midway, left] (text) {$E_t^{h}$} (c);
			\draw [->] (a)--(c);
			\path (a) -- node[midway, above] (text3) {$\Phi$} (b);
			\draw [->] (a)--(b);
			\path (b) -- node[midway, right] (text2) {$E_t^{\Phi_* h}$} (d);
			\draw [->] (b)--(d);
			\path (c) -- node[midway, below] (text4) {$\Phi$} (d);
			\draw [->] (c)--(d);
		\end{tikzpicture}
		\caption{Diagram demonstrating that one can either first map $X$ to $\Phi(X)$ and then condition, or one can first condition $X$ and then map the conditioned process through $\Phi$. Horizontal and vertical arrows correspond to transforming by $\Phi$ and conditioning respectively. }
		\label{fig:diagram}
\end{figure}
Our next step is to establish an analogue of \Cref{thm:absolute-continuity-drift} for the guided process on $N$. To this end, we note that the proof of \Cref{thm:absolute-continuity-drift} is based on a result from \cite{corstanje2021conditioning}, which we have included for the reader's convenience in \Cref{sec:proofs} (see \Cref{thm:absolute-continuity}). It requires checking multiple conditions for functions $g$ and $h$ as above. The theorem below states that these conditions are then necessarily also satisfied for $\Phi_* g$ and $\Phi_* h$. This means that equivalence of the laws of the true conditioned process and the approximation using a guided process is maintained under diffeomorphisms. We believe this is a natural property which nevertheless is not shared by some other methods for diffusion bridge simulation (such as for example  \cite{delyon2006}).

 Let $\bb{Q}_t$ denote the measure $\bb Q$ restricted to $\sigma\left(Y_s\colon s\leq t\right)$. Define the law $\bb{Q}_t^{\Phi_* h}$ by the change of measure 
 \[\frac{\dd \bb{Q}_t^{\Phi_* h}}{ \dd \bb{Q}_t}(Y) = E_t^{\Phi_* h}(Y).\] 
Suppose $Y$ starts at $y_0$ and is conditioned on $y_T$ at time $T$. 
\begin{thm}
	\label{thm:invariance-of-GPs}
If the assumptions of \Cref{thm:absolute-continuity} are satisfied for the process $X$ with $g$ and $h$, then these assumptions are also satisfied for the process $Y$ with $\Phi_* g$ and $\Phi_* h$. Therefore, 
 	\[ \der{\bb{Q}_T^{\Phi_* h}}{\bb{Q}_T^{\Phi_* g}}(Y) = \frac{\Phi_* g(0,y_0)}{\Phi_* h(0,y_0)} \Psi_{T,g}(\Phi^{-1}(Y)),\]
  with $\Psi_{T,g}$ as defined in \eqref{eq:Psi}. 
\end{thm}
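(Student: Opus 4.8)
The plan is to reduce the whole statement to a single transfer identity between the multiplicative functionals $E_t^{f}$ of \eqref{eq:def-Mtf-Etf} evaluated on $M$-paths and on $N$-paths, and then to invoke \Cref{thm:absolute-continuity} and \Cref{thm:absolute-continuity-drift}; the mechanism is that $\Phi$ being a diffeomorphism makes every relevant object covariant. First I would record that $Y=\Phi(X)$ is again a Markov process whose natural filtration coincides with that of $X$ (a diffeomorphism is a path-space homeomorphism) and whose law is the $\Phi$-pushforward of the law of $X$. By It\^o's formula, $Y$ has extended generator $\scr{A}^Y$ with $\big(\scr{A}^Y\varphi\big)(s,y)=\big(\scr{A}\,(\varphi\circ(\mathrm{id}\times\Phi))\big)(s,\Phi^{-1}(y))$ on $\mathcal{C}^{1,2}([0,T)\times N)$, equivalently $\big(\scr{A}^Y(\Phi_*f)\big)(s,\Phi(x))=(\scr{A}f)(s,x)$. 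Combined with $(\Phi_*f)(s,Y_s)=f(s,X_s)$ and substituted into \eqref{eq:def-Mtf-Etf}, this gives pathwise $E_t^{\Phi_*f}(Y)=E_t^f(X)$; in particular it holds for $f=g$ and $f=h$. This is essentially the identity behind \Cref{thm:invar}, so its proof can be reused.

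\textbf{Step 2 (the hypotheses transfer).} Next I would verify that the assumptions of \Cref{thm:absolute-continuity} for $(X,g,h)$ force the same assumptions for $(Y,\Phi_*g,\Phi_*h)$. The algebraic ones are immediate: $\Phi_*g,\Phi_*h>0$; the process $\{E_t^{\Phi_*f}(Y)\}$ is a martingale iff $\{E_t^f(X)\}$ is (Step 1 together with equality of filtrations), so $\Phi_*g,\Phi_*h$ belong to the analogue of $\mathcal H$ for $Y$; and $\scr{A}^Y(\Phi_*h)=0$ follows from $\scr{A}h=0$ by the pushforward identity. The remaining, probabilistic, conditions (typically almost sure convergence of $E_t^h(X)/E_t^g(X)$, equivalently of $h(t,X_t)/g(t,X_t)$, as $t\uparrow T$, together with uniform integrability under $\bb{P}^g$) transfer because $E_t^{\Phi_*h}(Y)/E_t^{\Phi_*g}(Y)=E_t^h(X)/E_t^g(X)$, because $(\Phi_*h)(t,Y_t)/(\Phi_*g)(t,Y_t)=h(t,X_t)/g(t,X_t)$, and because $\bb{Q}^{\Phi_*g}$ is the $\Phi$-pushforward of $\bb{P}^g$ (the change of measure commutes with the pushforward, again by Step 1) — so each such condition is literally the same statement about the same functionals under the same law.

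\textbf{Step 3 (identification of the density).} With the hypotheses in place, \Cref{thm:absolute-continuity} applies to $(Y,\Phi_*g,\Phi_*h)$ and yields $\bb{Q}_T^{\Phi_*h}\sim\bb{Q}_T^{\Phi_*g}$ with $\der{\bb{Q}_T^{\Phi_*h}}{\bb{Q}_T^{\Phi_*g}}(Y)$ expressed through $(\Phi_*g)(0,y_0)$, $(\Phi_*h)(0,y_0)$, a limiting ratio, and $\exp\!\big(\int_0^T \tfrac{\scr{A}^Y(\Phi_*g)(s,Y_s)}{(\Phi_*g)(s,Y_s)}\dd s\big)$. By the pushforward identity $\tfrac{\scr{A}^Y(\Phi_*g)(s,Y_s)}{(\Phi_*g)(s,Y_s)}=\tfrac{\scr{A}g(s,X_s)}{g(s,X_s)}=V(\log g)(s,X_s)$, the last equality being the heat-kernel identity $\partial_t g+\tfrac12\Delta g=0$, and the limiting ratio equals $\lim_{t\uparrow T}h(t,X_t)/g(t,X_t)=1$ exactly as computed in the proof of \Cref{thm:absolute-continuity-drift}. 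Since $y_0=\Phi(x_0)$ and $X=\Phi^{-1}(Y)$, this assembles to $\der{\bb{Q}_T^{\Phi_*h}}{\bb{Q}_T^{\Phi_*g}}(Y)=\frac{(\Phi_*g)(0,y_0)}{(\Phi_*h)(0,y_0)}\Psi_{T,g}(\Phi^{-1}(Y))$. Equivalently, one may observe that for $t<T$ the $N$- and $M$-side densities already coincide, $\der{\bb{Q}_t^{\Phi_*h}}{\bb{Q}_t^{\Phi_*g}}(Y)=\der{\bb{P}_t^h}{\bb{P}_t^g}(X)$, and pass to $t\uparrow T$ using \Cref{thm:absolute-continuity-drift} for $X$.

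\textbf{Main obstacle.} I expect the delicate part to be Step 2: one must make sure that \emph{every} hypothesis of \Cref{thm:absolute-continuity} is genuinely coordinate-free, i.e.\ depends on $g,h$ only through positivity, the martingale/$\mathcal H$ condition, $\scr{A}h=0$, and path functionals of the kind appearing above, and does not secretly use that $g$ is the heat kernel of the ambient metric. This matters because $\Phi$ need not be an isometry, so $\Phi_*g$ is \emph{not} the heat kernel on $N$ and $Y$ is not a Brownian motion on $N$; any hypothesis phrased via the Laplace--Beltrami operator of $N$ or a curvature/heat-kernel bound on $N$ would first have to be rewritten in the invariant form supplied by Step 1 before it can be transferred. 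Establishing the generator pushforward of Step 1 rigorously, in particular the matching of the domains of the extended generators, is routine but warrants care.
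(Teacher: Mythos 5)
Your proposal is correct and follows essentially the same route as the paper: establish the pathwise identity $E_t^{\Phi_* f}(Y)=E_t^f(X)$ via the generator pushforward (which is exactly the content of the proof of \Cref{thm:invar}), use it to transfer each hypothesis of \Cref{thm:absolute-continuity} to $(Y,\Phi_* g,\Phi_* h)$, and read off the density formula with $\scr{A}_Y(\Phi_* g)/(\Phi_* g)$ evaluated along $Y$ equal to $\scr{A}g/g$ evaluated along $X$. The only cosmetic difference is that the paper verifies the convergence hypothesis \ref{ass:main-expectations} by an explicit change of variables using the transition density of $Y$ with respect to $\Phi_*\left(\mathrm{Vol}_M\right)$, whereas you argue more abstractly that the relevant conditional expectations coincide under the pushforward law.
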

The proof is given in \Cref{subsec:proofinvariance2}. If $\tilde{\Phi}\colon \FM \to \mathrm{F}N$ denotes the horizontal lift of $\Phi$, then under $\bb{Q}^{\Phi_*g}$, the process $\Lambda_t := \tilde\Phi(U_t)$ satisfies the Stratonovich SDE
\begin{equation}
\label{eq:mapped-SDE}
    \dd\tilde\Phi(U_t) = \left(\tilde{\Phi}_*\tilde{V}\right)\left(U_t\right)\dd t + \tilde\Phi_*\left(\nabla^H \log \tilde g\right)\left(t, U_t\right) \dd t + \left(\tilde{\Phi}_*H_i\right)\left(U_t\right)\circ\dd W_t^i .
\end{equation}
The guiding term $\tilde\Phi_*\left(\nabla^H \log \tilde g\right)$ can be rewritten as $\left\{ \left(\tilde{\Phi}_*H_i\right)(\log\tilde \Phi_*\tilde g),\,i=1,\dots,d\right\}$. 
To sample processes under $\bb{Q}_T^{\Phi_* h}$, one can apply an Euler-Heun scheme to \eqref{eq:mapped-SDE}. However, computation of the pushforward vector fields can be avoided as in \Cref{sec:numerical} we give a Metropolis-Hastings algorithm to sample from $X\sim \bb{P}_T^h$ on $M$. \Cref{thm:invariance-of-GPs} then implies that such samples can be converted to samples $\Phi(X) \sim \bb{Q}_T^{\Phi_* h}$.
	\begin{ex}[Euclidean SDEs]
	\label{rem:InvarianceGuidingTerm}
		If $X$ is a Euclidean stochastic differential equation (SDE) with drift $b$, diffusion coefficient $\sigma$ and let $a = \sigma\sigma^\T$, then $X^h$ satisfies the SDE
	\[ \dd X^h_t = b(t, X^h_t) \dd t + a(t,X^h_t) r_h(t,X^h_t) \dd t + \sigma(t, X^h_t) \dd W_t, \]
	 where $r_h(t,x) = \nabla_x\log h(t,x)$. 
It follows from direct computations that the guiding term for $Y^{\Phi_* h}$ is given by $a^\Phi(t,y)r_{\Phi_* h}(t,y)$, where
		\begin{align*}
			 r_{\Phi_* h}(t,y) &= \nabla_y\log \Phi_* h(t,y) = J_\Phi({\Phi^{-1}}(y))^\T \left. \nabla_x\log h(t, x)\right\rvert_{x=\Phi^{-1}(y)} \\ &= J_\Phi({\Phi^{-1}}(y))^\T r_h(t,\Phi^{-1}(y))
		\end{align*} 
		and $a^\Phi(t,y) =\sigma^\Phi(t,y)\sigma^\Phi(t,y)^\T$ with  \[\sigma^\Phi(t,y) = J_\Phi(\Phi^{-1}(y))^\T \sigma(t,\Phi^{-1}(y)).\] In view of \eqref{eq:mapped-SDE}, this is what one should expect, as $r(t,x)$ is a vector in $T_x \bb{R}^d$ and the new guiding term $\Phi_*\left(\nabla_x \log h\right)(t, \Phi^{-1}(y))$ is exactly the pushforward of this tangent vector under $\Phi$. 
	\end{ex}

    	\section{Numerical simulations}
            \label{sec:numerical}
            For numerical simulations of SDEs on manifolds, we utilize the functionalities implemented in \cite{manifolds}, to which we added functions for horizontal development similar to the Python implementation described in \cite{kuhnel2017computational}. Julia written code is available at \cite{manifold_bridge}. 

        \subsection{Bridge simulation}
Suppose we wish to sample bridges on the manifold $N$ connecting $y_0$  at time $0$ and $y_T$ at time $T$. Suppose $\Phi \colon M \to N$ is a diffeomorphism  and on $M$
the heat kernel is known in closed form.  Assume  $g$ as in \eqref{eq:heatkernel} with $x_T=\Phi^{-1}(y_T)$ and let $x_0=\Phi^{-1}(y_0)$. Let $u_0$ be such that $x_0=\pi(u_0)$. 
Assuming a strong solution to \Cref{eq:SDE-conditioned-process}, there exists  a map $\mathcal{GP}$ such that  $U=\mathcal{GP}(u_0, Z)$. In our numerical implementation, this means that the SDE for $U$ is solved by Euler-Heun discretisation on a dense (fixed) grid, and $Z$ is the vector with increments of the driving Wiener process of the SDE on this grid. With slight abuse of notation we write $X=\pi(U)$ for the projection of $U$ on the manifold $M$. 

\Cref{alg:crank-nicolson} gives a Metropolis-Hastings algorithm to sample from $\bb{P}^h$ on $N$ using a preconditioned Crank-Nicolson step (\cite{Cotter2013MCMC}).  With $\lambda=0$ it corresponds to an Metropolis-Hastings algorithm with independent proposal distribution.
Note that the guided process does take the drift $V$ into account, but ignores it in the guiding term. If the drift is strong, or we condition the process on a very unlikely point, the acceptance rate of the independence sampler may be very low. In that case, taking a value of $\lambda$ closer to $1$, for example to target a Metropolis-Hastings acceptance percentage of about $25\%-50\%$, will facilitate the chain to reach stationarity.  In the numerical simulation results that we present, $\lambda$ was chosen by monitoring the acceptance rates on a trial run and subsequently adjusted to achieve the desired acceptance rate. Alternatively, adaptive methods can be used for tuning $\lambda$ (cf.\ \cite{roberts2009examples}). In the simulation results on the torus, we sampled the guided process on a grid with mesh width $0.001$, mapped through a time change $t\mapsto t(1.7 - 0.7t/T)$ to allow for more grid points near $T$.  

\begin{algorithm}[h]
    		    \caption{Metropolis-Hastings algorithm for sampling diffusion bridges on manifold $N$}
    		    \label{alg:crank-nicolson}
    		    \KwIn{Number of iterations $K$ and  tuning parameter $\lambda\in[0,1)$. }
                Sample a Brownian motion $Z$, set $U=\mathcal{GP}(u_0, Z)$ and $X=\pi(U)$ \;
                Sample a Brownian motion $W$ independent of $Z$ and let $Z^\circ = \lambda Z + \sqrt{1-\lambda^2}W$ \; 
                Set $U^\circ =  \mathcal{GP}(u_0, Z^\circ)$, set $X^\circ=\pi(U^\circ)$ and compute 
                $ \alpha =1 \wedge \Psi_{T,g}(X^\circ)/ \Psi_{T,g}(X)$ with $\Psi_{T,g}$ defined in \eqref{eq:Psi} \;
                With probability $\alpha$ set $X\gets X^\circ$ and $Z\gets Z^\circ$. \;
                Repeat steps 2--4 $K$ times to obtain $X^{(1)}, \ldots, X^{(K)}$\;
                Output $\Phi(X^{(1)}),\ldots, \Phi(X^{K)})$.
\end{algorithm}

     \subsection{Parameter estimation}
        \label{subsec:parameter_estimation}
Suppose we have observations $\mathcal{D} = \{ X_{t_i} = x_i,\, i=1,\dots, n\}$ and wish to infer the parameter $\theta$ in \eqref{eq:vector_field_2_terms}. To perform parameter inference conditional on $\mathcal{D}$, we use data augmentation, where we construct a Markov chain that samples from the joint distribution of $(\theta , X)$, conditional on $\mathcal{D}$. Here, $X=\{X_t,\, t\in [0,t_n]\}$.  Specifically, we repeatedly perform the following steps 
            \begin{enumerate}
                \item  Update $Z \mid (\mathcal{D}, \theta)$ using steps 2--4 of  \Cref{alg:crank-nicolson} and set $U = \mathcal{GP}_\theta(u_0,Z)$. 
                \item Update $\theta \mid (\mathcal{D}, X)$, where $X=\pi(U)$. 
                \item Compute $Z$ as the anti-development of $X$ under the  drift with parameter $\theta$. That is,
                \[ Z_t = \int_0^t U_s^{-1}\circ\left(\dd X_s - V_\theta(X_s)\dd s - \nabla\log g(s,X_s)\dd s\right) \] 
                \end{enumerate}

Step (2) requires an update of $\theta$. To do this, we utilize the following conjugacy result, based on Proposition 4.5 of \cite{mider2021continuous} for normal priors. It follows from Girsanov's theorem, see e.g. Proposition 1B of \cite{elworthy1988geometric} , that if we assign a prior $\pi(\theta)$ to $\theta$ then
             \[ \pi(\theta \mid Z, X) \propto \exp\left(\int_0^{t_n} \inner{V_\theta(X_s)}{\dd X_s}_{X_s} - \frac12 \int_0^{t_n} \inner{V_\theta(X_s)}{V_\theta(X_s)}_{X_s} \dd s \right)\pi(\theta). \]
            Hence, if  $\theta \sim N(0, \Gamma_0^{-1})$, then $\theta\mid X, Z \sim N(\Gamma^{-1}\mu, \Gamma^{-1})$, where,
            \[\begin{aligned}  \mu &= \Bm \int_0^{t_n} \inner{\phi_{k}(X_s)}{\dd X_s}_{X_s} \Em_{k=1}^K \\
            \Gamma &= \Bm \int_0^{t_n} \inner{\phi_{k}(X_s)}{\phi_{\ell}(X_s)}_{X_s} \dd s \Em_{k,\ell=1}^K + \Gamma_0
            \end{aligned} \]

\subsection{Example: guided processes on the torus}
            We illustrate the simulation of diffusion bridges on the embedded torus. To do this, we use the known closed form expression for the heat kernel on the flat torus and then map that to the embedded torus in $\bb{R}^3$ with the metric inherited from the embedding space. From Example 6 of \cite{hansen2021geometric}, it can be deduced that the heat kernel on the two-dimensional flat torus $\bb{T}^2 = \bb{S}^1 \times \bb{S}^1$ is given by 
            \begin{equation}
            \label{eq:heat-kernel-torus}
                \kappa(t,x,y) = \frac{1}{2\pi t} \sum_{k\in\bb{Z}}\sum_{\ell\in\bb{Z}} \exp\left(-\frac{(x_1-y_1-2k\pi)^2}{2t} - \frac{(x_2-y_2-2\ell\pi)^2}{2t}\right).
            \end{equation}
                For $\theta \in \bb{R}^2$, we impose the vector field     
            \begin{equation}
                \label{eq:vector_field_2_terms}
                V_\theta(x) = \theta_1\phi_1(x) + \theta_2\phi_2(x),\qquad x\in M,
            \end{equation}
            where $\phi_1$ and $\phi_2$ are the vector fields sketched in \Cref{fig:vector_fields}. In the numerical implementation we truncated the series expansion  \eqref{eq:heat-kernel-torus} to the indices $-10, \dots, 10$ of both series.

            \begin{figure}[h]
            \centering
                \includegraphics[width = 2.5in]{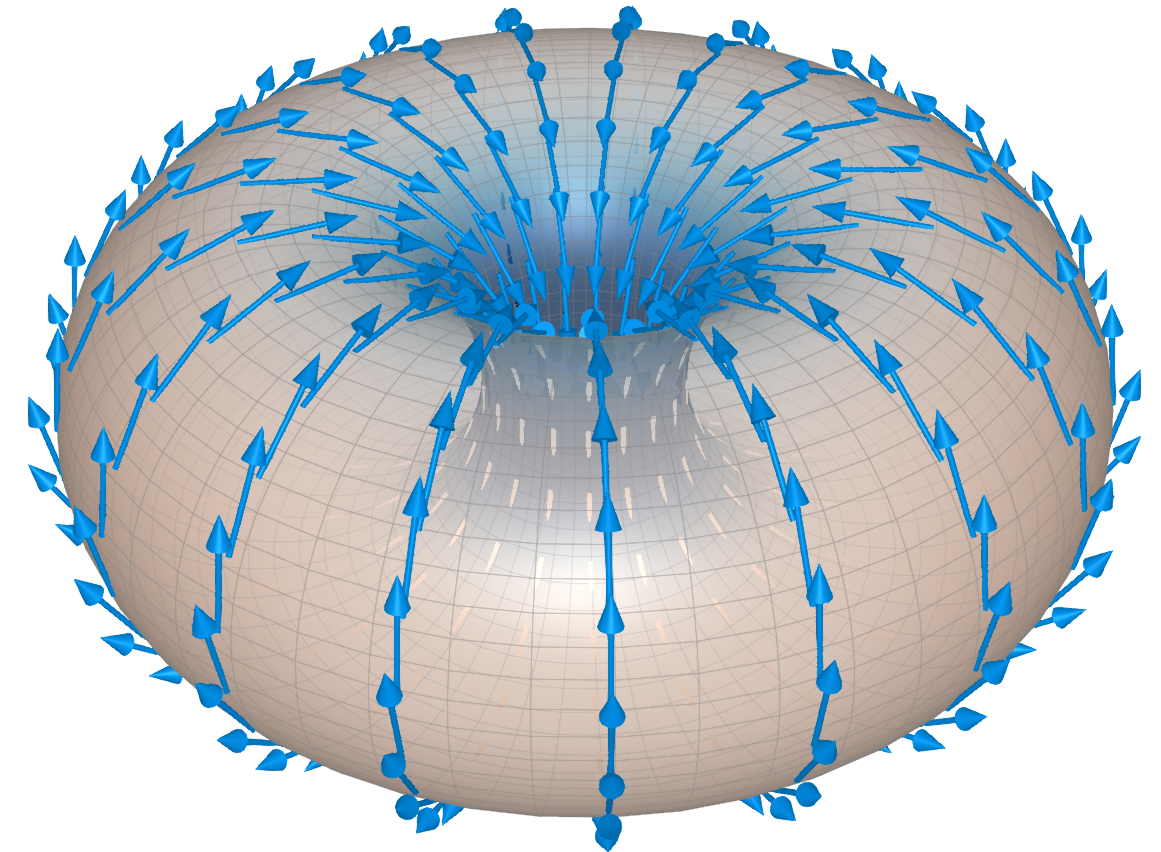}
                \includegraphics[width = 2.5in]{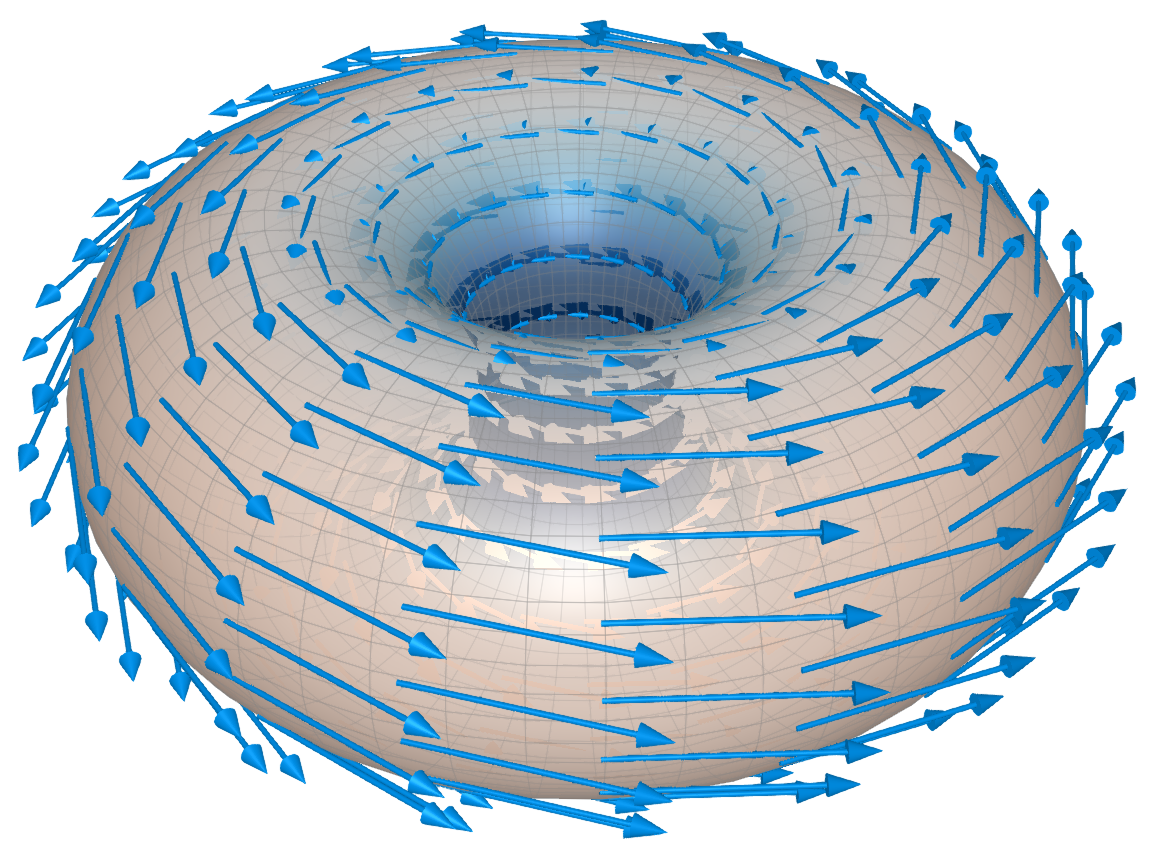}
                \caption{ Illustration of the vector fields $\phi_1$ (left) and $\phi_2$ (right).}
                \label{fig:vector_fields}
            \end{figure}
We apply \Cref{alg:crank-nicolson} in the following settings:
\begin{enumerate}
    \item A somewhat weak vector field with $\theta=(-2,2)$, where we condition the process on a point obtained from forward simulation of the unconditional process (\Cref{fig:bridge-with-forward-path_-22}, guided processes obtained with $\lambda=0.95$). One can visually assess that samples of the guided process resemble the original forward path.
       \item A stronger vector field with $\theta=(-1,8)$, where we condition the process on a point obtained from forward simulation of the unconditional process (\Cref{fig:bridge-with-forward-path_-18}, guided processes  obtained with $\lambda = 0.99$). Due to the strong drift, paths should take a turn around the torus. Indeed, samples of the  guided process do so. 
    \item A very weak vector field with $\theta=(-1,0.1)$, where we condition the process on a point which is unlikely under the forward  unconditional process (\Cref{fig:bridge-with-forward-path_-11}, guided processes  obtained with $\lambda = 0.7$). In this case, samples from the guided process can either go clockwise or counterclockwise, the latter seemingly more likely. In this case, in $\Cref{tab:acceptance rates}$ we also report how the choice of $\lambda$ affects the acceptance rate in \Cref{alg:crank-nicolson}.  
\end{enumerate}
In all cases the right-hand-plot displays every 50-th sample obtained from 1000 iterations of \Cref{alg:crank-nicolson}. 

            \begin{figure}[h]
            \centering
                \includegraphics[height = 1.7in]{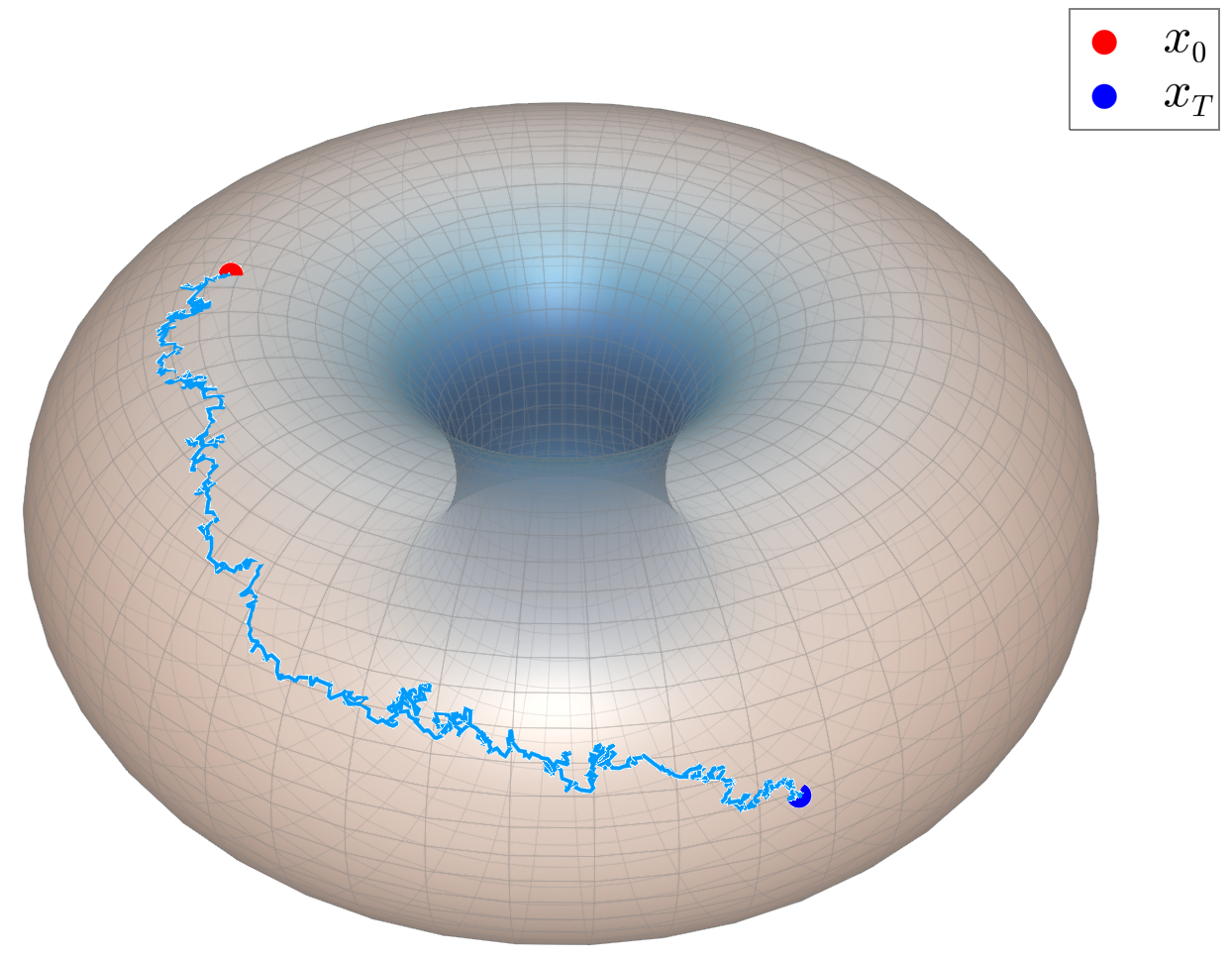}
                \includegraphics[height = 1.7in]{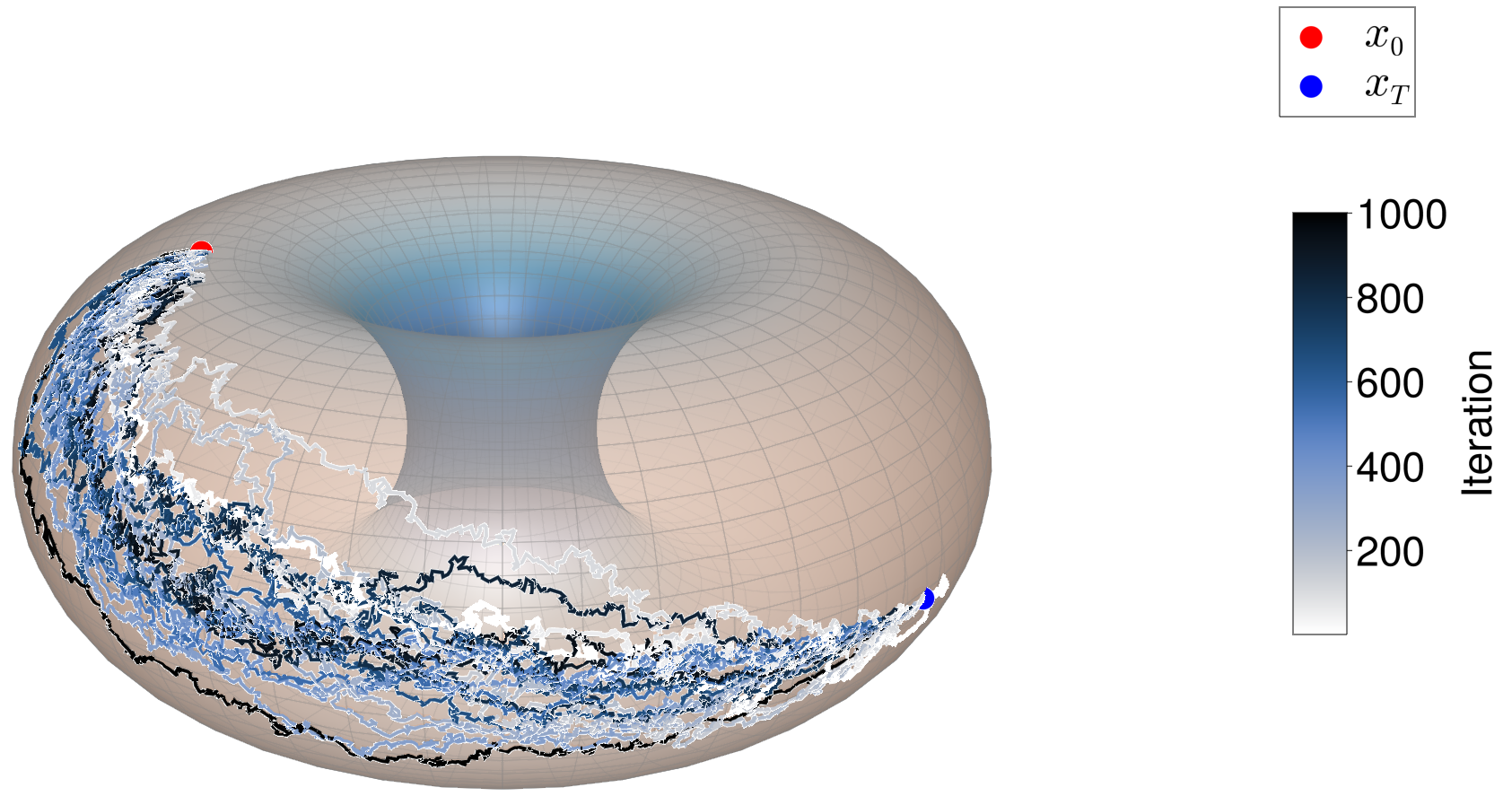}
                \caption{Left: Forward simulation of unconditional process $X$. Right: final 6 accepted samplepaths of 1000 repetitions of \Cref{alg:crank-nicolson} of bridge processes starting at $(3,0,2)$ conditioned to hit $X_T$ at time $T=1$. We used the vector field described in \eqref{eq:vector_field_2_terms} with $\theta = (-2,2)$. The acceptance rate was 17\%.}
                \label{fig:bridge-with-forward-path_-22}
            \end{figure}

            \begin{figure}[h]
            \centering
                \includegraphics[height = 1.7in]{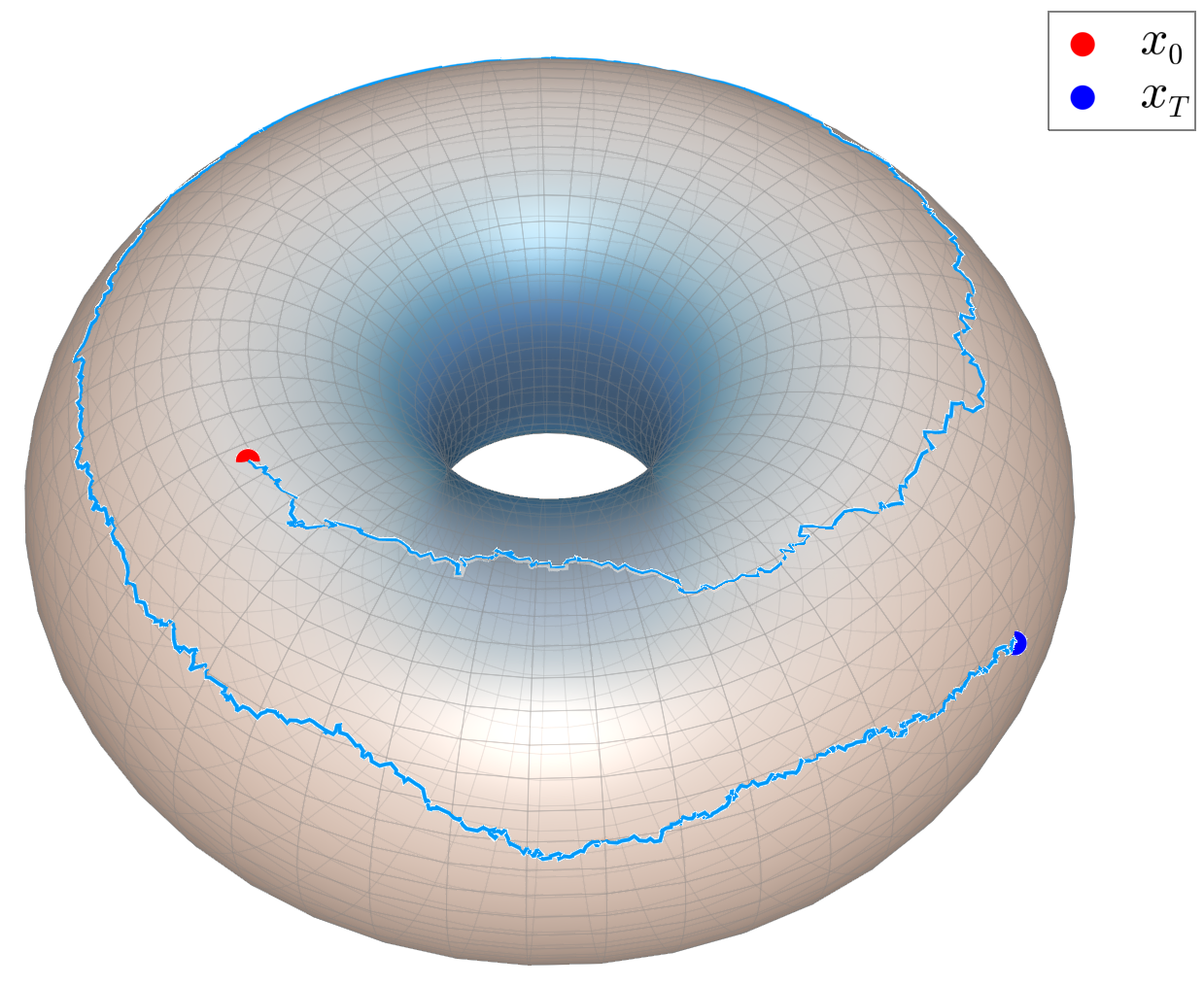}
                \includegraphics[height = 1.7in]{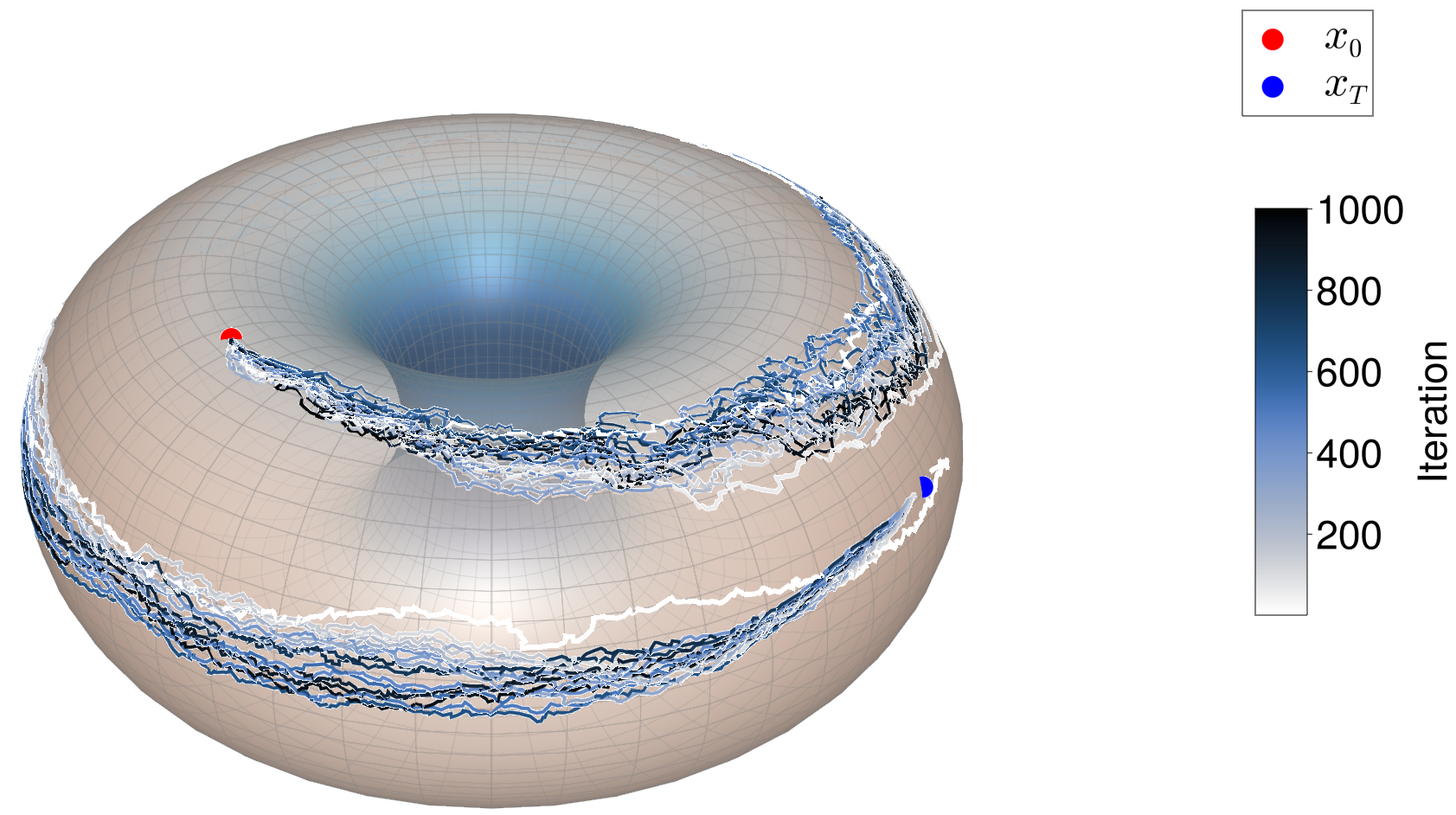}
                \caption{Left: Forward simulation of $X$. Right: final 6 accepted samplepaths of 1000 repetitions of \Cref{alg:crank-nicolson} of bridge processes starting at $(3,0,2)$ conditioned to hit $X_T$ at time $T=1$. We used the vector field described in \eqref{eq:vector_field_2_terms} with $\theta = (-1,8)$. The acceptance rate was 19.5\%.}
                \label{fig:bridge-with-forward-path_-18}
            \end{figure}            

            \begin{figure}[h]
            \centering
                \includegraphics[height = 1.7in]{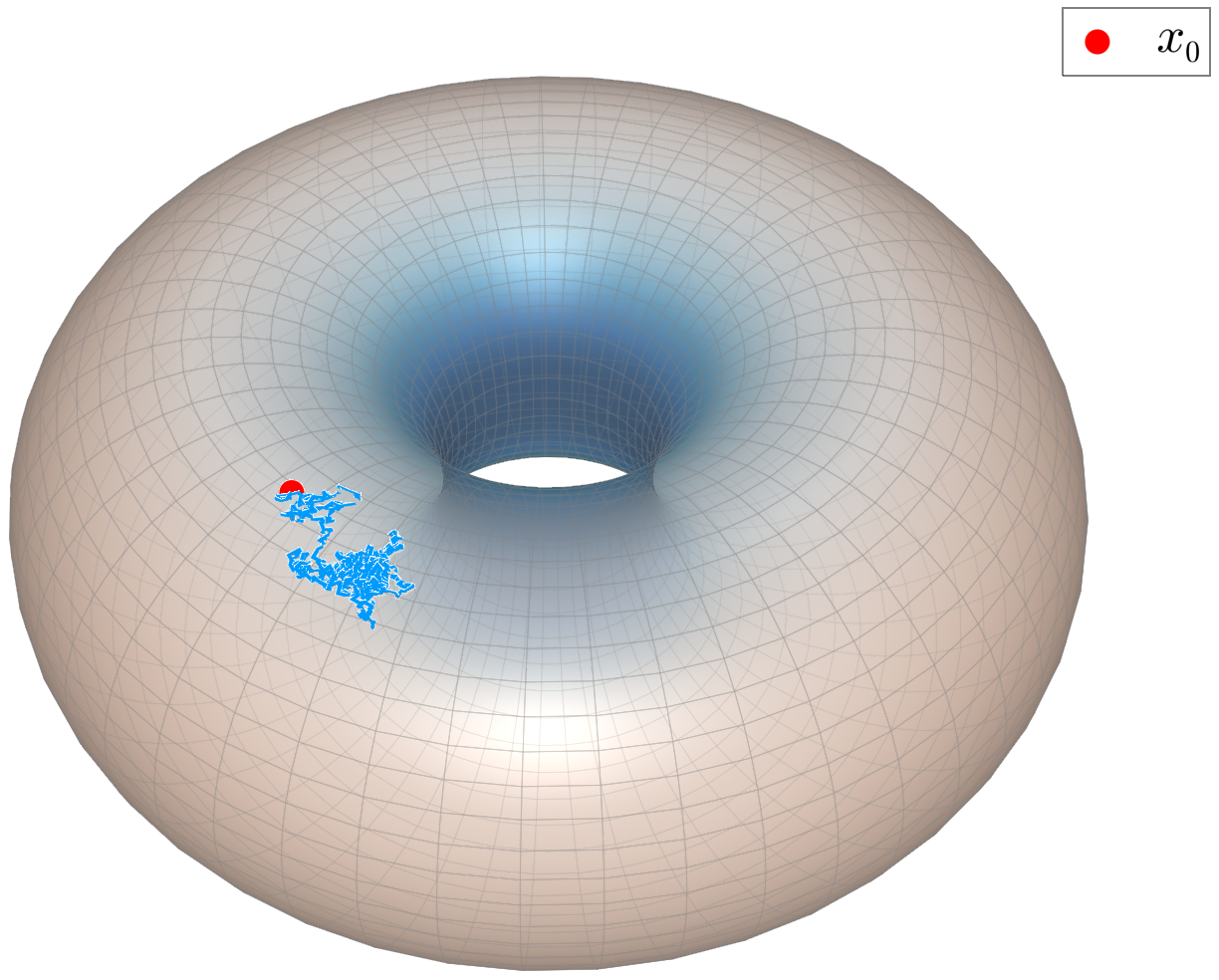}
                \includegraphics[height = 1.7in]{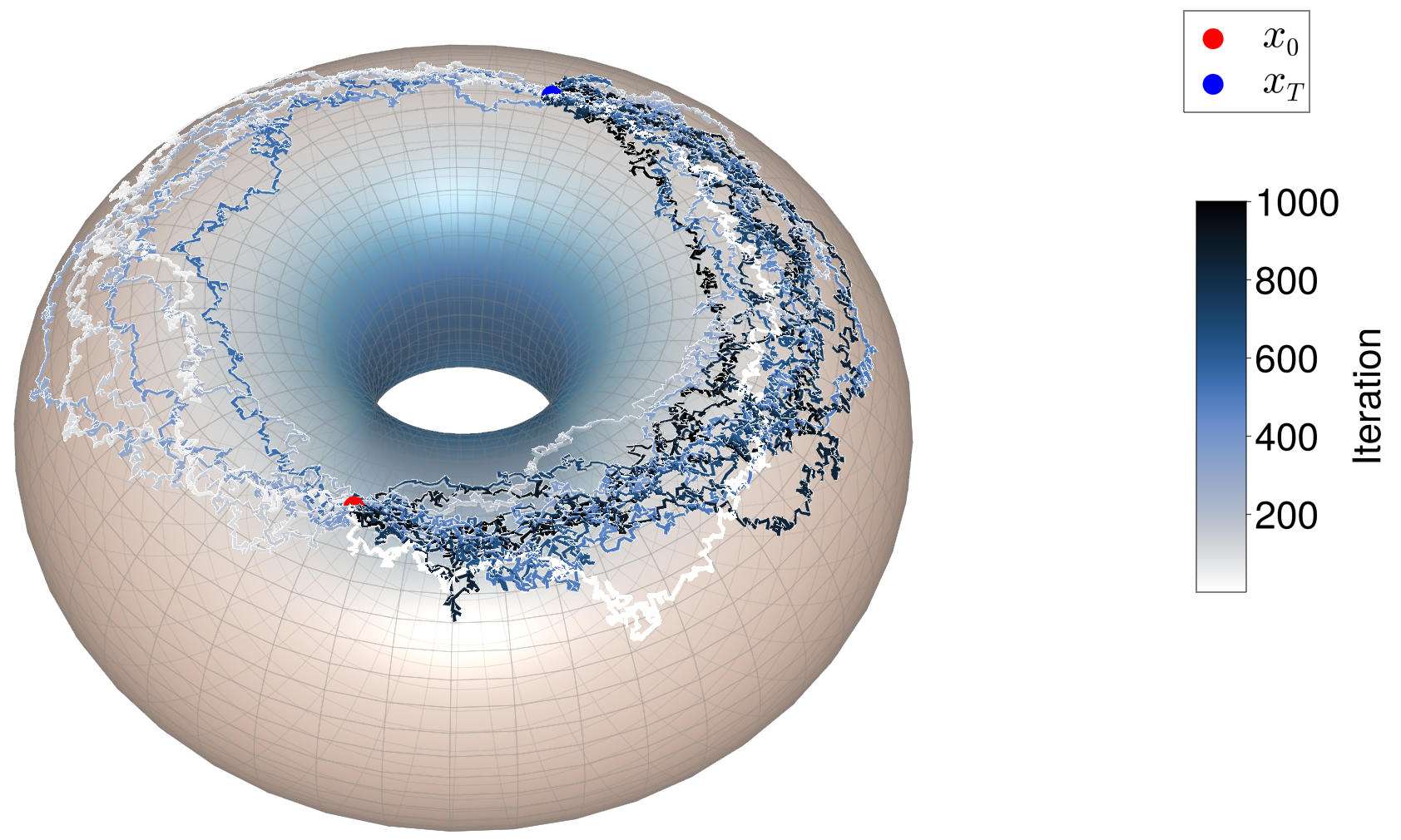}
                \caption{Left: Forward trajectory of the unconditioned process  Right: One trajectory for each 20 accepted proposals of \Cref{alg:crank-nicolson} for the process conditioned on hitting $(-3,0,2)$. A darker color indicates a later iteration of the algorithm. The vector field \eqref{eq:vector_field_2_terms} with $\theta = (-1, 0.1)$ was applied.}
                \label{fig:bridge-with-forward-path_-11}
            \end{figure}

             \begin{table}[h]
                 \centering
                 \begin{tabular}{l |c |c |c |c |c } \hline 
                     $\lambda$ & 0.0 & 0.5 & 0.7 & 0.9 & 0.99 \\ 
                     acceptance rate & 10\%  & 27\%  & 39\%   & 60\% &        89\%  \\ \hline 
                 \end{tabular}
                 \caption{Acceptance rate for various choices of $\lambda$ for the 3rd setting considered (\Cref{fig:bridge-with-forward-path_-11}).}
                 \label{tab:acceptance rates}
             \end{table}

We applied the algorithm for parameter estimation from \Cref{subsec:parameter_estimation}  to a dataset generated under forward simulating $X$ with vector field \eqref{eq:vector_field_2_terms} with $\theta=(4,-4)$ and saving its values at $40$ equidistant times. The posterior samples are displayed in \Cref{fig:gibbs}. For the prior, we used $\Gamma_0 = 0.05I$ and we used tuning parameter $\lambda = 0.9$ in the Crank-Nicolson step. 

            \begin{figure}[h]
            \centering
                \includegraphics[width = 4.5in]{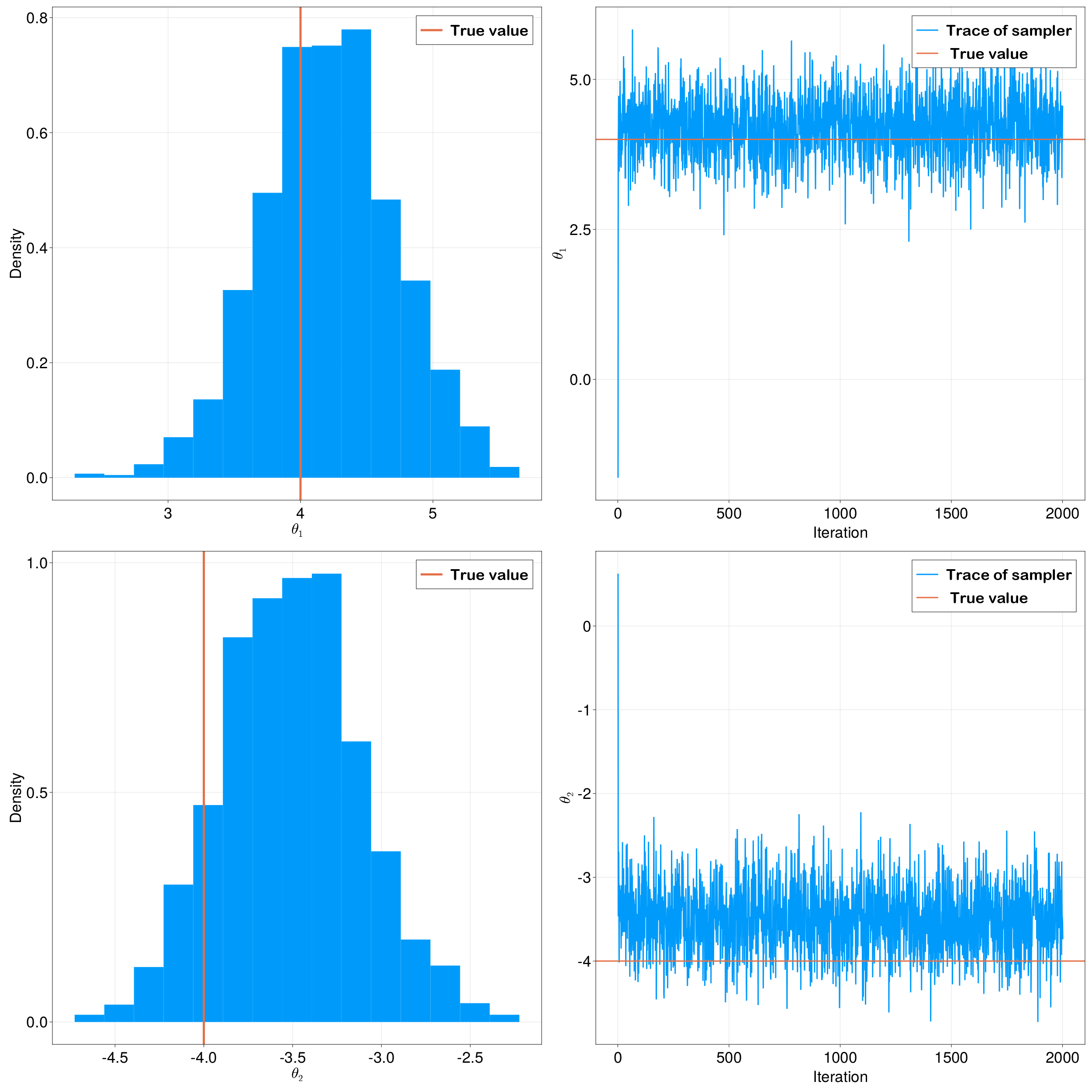}
                \caption{Left: Histograms 1900 samples from the posterior distribution of $\theta$ after 100 iterations of burn-in time using the Gibbs sampler Right: Trace of the sampler. The top row are the samples for $\theta_1$, the bottom row the samples for $\theta_2$. Data were generated with $(\theta_1,\theta_2)=(4,-4)$ as indicated by the red lines.}
                \label{fig:gibbs}
            \end{figure}

\subsection{Example: guided processes on the Poincar\'e disk}
\cite{hansen2021geometric} provide several examples of manifolds on which the heat kernel is known. On all manifolds diffeomorphic to these, the approach presented in this article can be applied. Since diffusion models on hyperbolic spaces are often seen in machine learning applications, see e.g. \cite{fu2024hyperbolic}, \cite{wen2024hyperbolicgraphdiffusionmodel} as well as in the theory of general relativity, we'll explore this setting in more detail here.

The hyperbolic space is a non-Euclidean geometric space with constant negative sectional curvature. Unlike Euclidean spaces, where parallel lines remain equidistant, in hyperbolic space, they diverge. The $2$-dimensional hyperpolic space can be embedded into t $\bb{R}^3$, equipped with the Minkowski metric by considering the subspace of all points with Minkowski norm $-1$. Here, we consider the Poincar\'e disk  to model the $2$-dimensional hyperbolic space in local coordinates, which can be interpreted as a stereographic projection of the three-dimensional representation onto the unit disk. Here the entire infinite space is mapped inside the unit disk, preserving angles but distorting distances. Let $D=\{z\in \mathbb{C}\, :\, |z|<1\}$. The hyperbolic distance between $z_1, z_2 \in D$ is given by 
\[
\dist(z_1, z_2) = \operatorname{arcosh} \left( 1 + \frac{2 |z_1 - z_2|^2}{(1 - |z_1|^2)(1 - |z_2|^2)} \right)
\]

        An expression for the heat kernel for general $d$ on $\mathcal{H}^d$ is given in \cite{grigoryan_hyperbolic}. For $d=2$, we have 
\begin{equation}
\label{eq:transition_density_hyperbolic}
    p(s,x;t,y) = \frac{\sqrt{2}e^{-(t-s)/2}}{(2\pi(t-s))^{3/2}}\int_{\rho_x}^\infty \frac{u\exp\left(-\frac{u^2}{2(t-s)}\right)}{\sqrt{\cosh u - \cosh\rho_x}}\dd u ,
\end{equation}
where $\rho_x = \dist(x,y)$ denotes the geodesic distance between $x$ and $y$. 

Despite the manifold  not being compact, we apply our methodology and construct the guided process.  Details on evaluating $\nabla_x \log p(s,x; t,y)$ are given in \Cref{app:heat_kernel_H2}. 
We consider three vector fields
\begin{itemize}
    \item $V_1(x) = -20x$ which strongly direct paths towards the center of the disc;
    \item $V_2(x)=x$ which sends paths towards the boundary of the disc;
\item $V_3(x) = [5 (1-x_1^2-x_2^2)^2; 0]$ which sends paths in the east direction. A visualisation of this vector field is in the left panel of \Cref{fig:V3omdraaien}. 
\end{itemize}
For each of these vector fields, we simulate $4$ independent realisations of the guided process on $[0,1]$ where each path starts at $x_0=[0; 0.6]$ and ends at $x_1=[0; -06]$. The paths were simulated on a grid with mesh-width $0.001$ that was then mapped to a non-equidistant grid through the map $s\mapsto s(2-s)$ as advocated in Section 5 of \cite{meulen_schauer_bayest}.  The simulated paths are shown in \Cref{fig:hyperbolic-bridges}. One can see in the top panel that the process is pushed rather quickly towards the center, stays most of the time near the center, and then moves to $x_1$. The bottom panel indeed show the paths tend to go eastwards. 
\begin{figure}[h]
\begin{center}
    \includegraphics[scale=0.8]{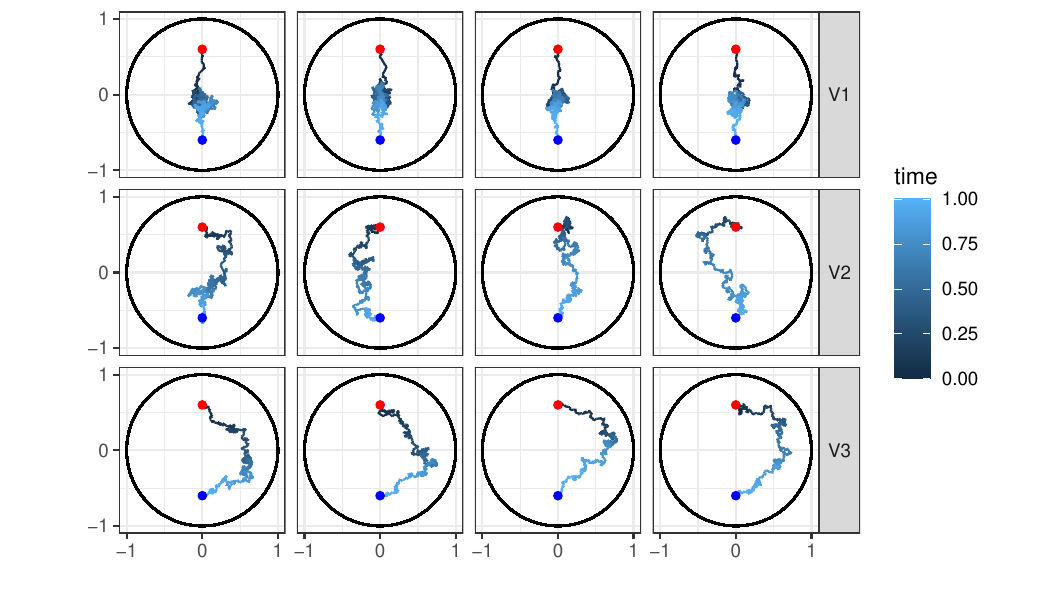}
    \caption{Each panel shows an independent realisation of the guided process connecting $x_0$ (red dot) and $x_1$ (blue dot). Top row: vector field $V_1$, middle row: vector field $V_2$; bottom row: vector field $V_3$. \label{fig:hyperbolic-bridges}}
\end{center}
\end{figure}
\begin{figure}[h]
\begin{center}
\includegraphics[scale=0.8]{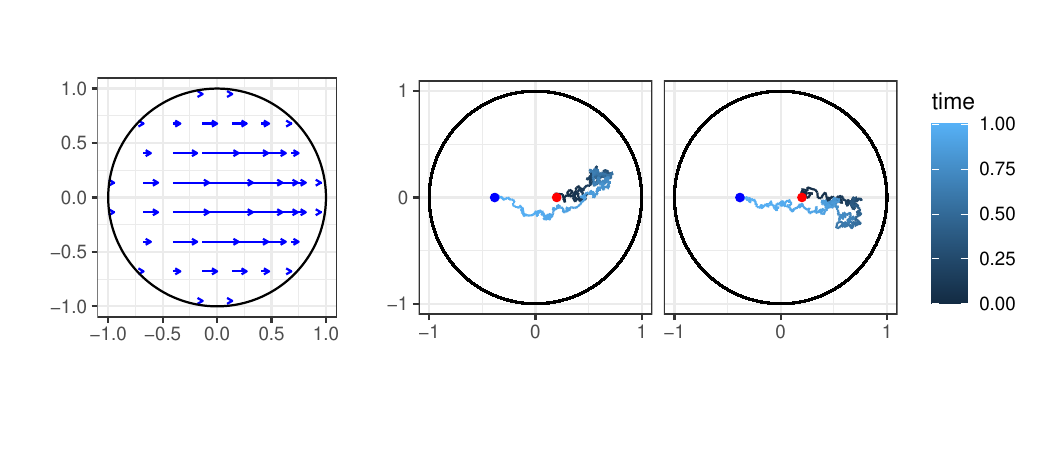}
    \caption{Left: visualisation of vector field $V_3$. Right: each panel shows an independent realisation of the guided process connecting $x_0=[0; 0.2]$ (red dot) and $x_1=[-0.8;0]$ (blue dot). \label{fig:V3omdraaien}}
\end{center}
\end{figure}
In the right panel of  \Cref{fig:V3omdraaien} we show two  simulated guided paths where we changed the starting point to $[0; 0.2]$ and condition the process to hit $x_1=[-0.8;0]$ at time $1$. One can see that paths initially tend to move eastwards due to the vector field  $V_3$ but are subsequently forced to hit $x_1$ at time $1$.

For vector field $V_3$ we run the pCN-scheme of \Cref{alg:crank-nicolson} with $\lambda=0.4$ for $500$ iterations and show every $50$th iterate in \Cref{fig:hyperbolic-pcn}. The acceptance percentage was approximately $80\%$, showing that the guided process qualitatively approximates the true conditioned process quite well.

\begin{figure}[h]
    \centering
    \includegraphics[scale=0.7]{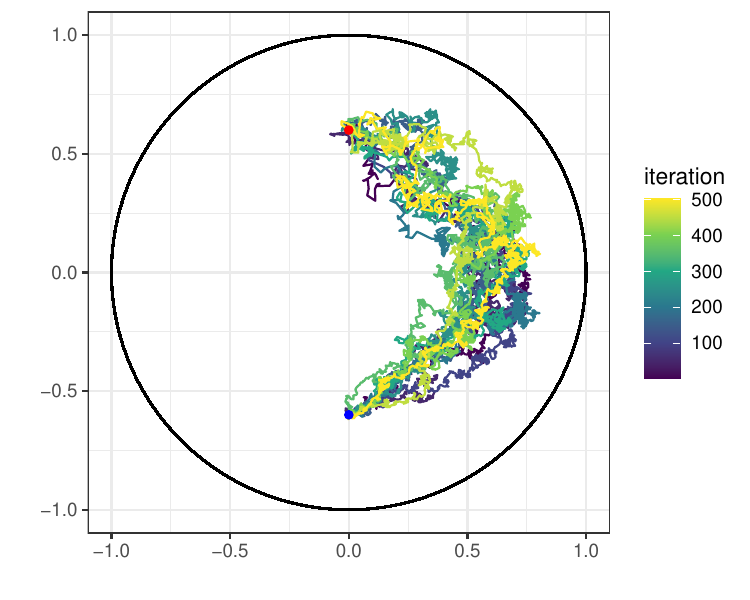}
        
    \caption{Every $50$th iteration out of $500$ iterations after running the pCN-scheme of \Cref{alg:crank-nicolson} for vector field $V_3$ on $\mathcal{H}^2$.
    \label{fig:hyperbolic-pcn}}
\end{figure}

Lastly, we apply the Gibbs sampler described \Cref{subsec:parameter_estimation} to estimate the parameter $\theta$ appearing in the vector field $V_\theta(x) = [\theta(1-x_1^2-x_2^2)^2 ; 0]$ with a $N(0,20^2)$-prior. \Cref{fig:gibbs_hyperbolic} demonstrates the estimates of 1000 iterations with tuning parameter $\lambda = 0.9$. The observations were observations of a forward simulated process with $\theta=5$ on an equally spaced grid of 100 points in the time interval $[0,1]$.

\begin{figure}[h]
\centering
\includegraphics[width = 0.6\textwidth]{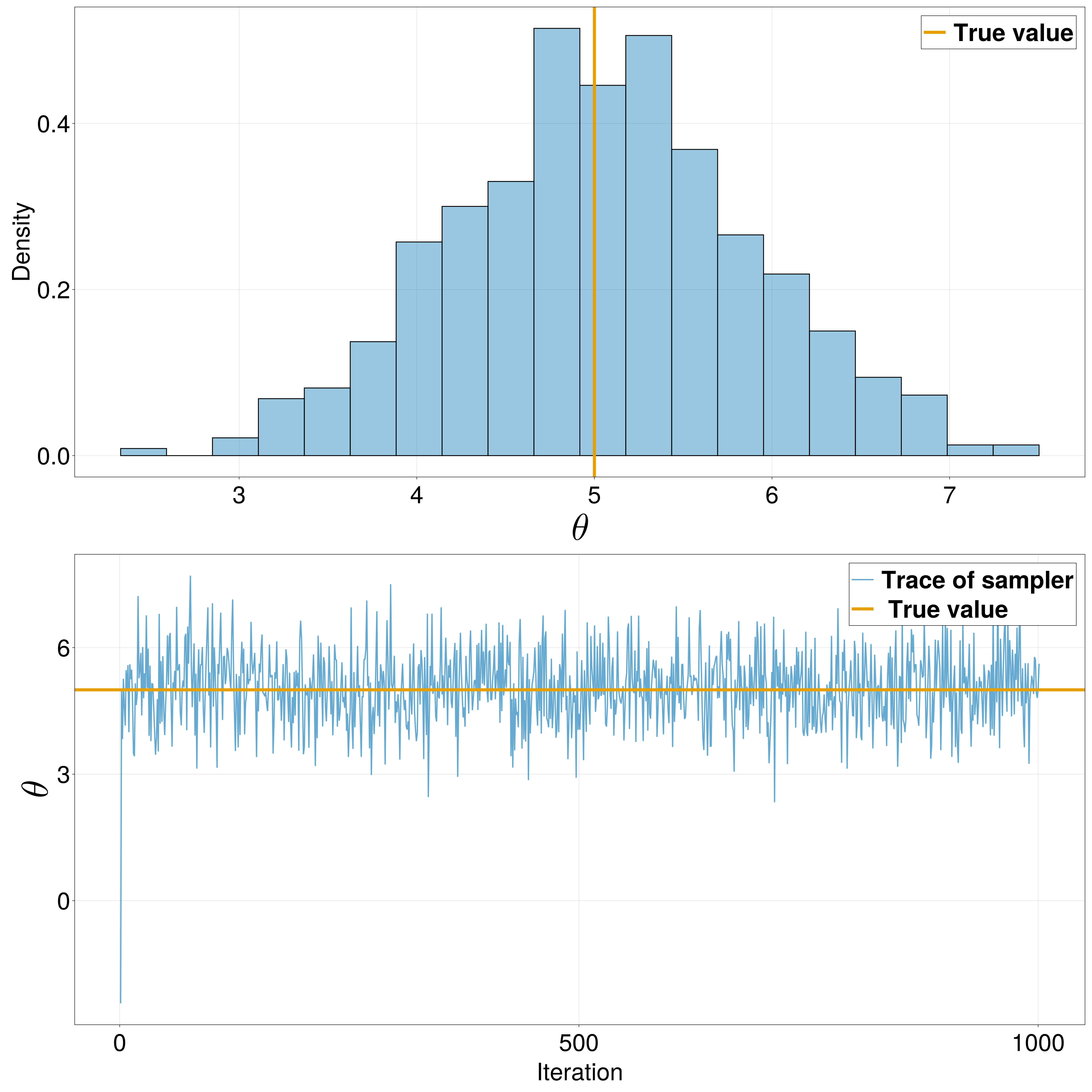}
\caption{Top: Histogram of 900 from the posterior distribution of the parameter $\theta$ appearing in the vector field  $V_\theta(x) = [\theta(1-x_1^2-x_2^2)^2 ; 0]$ after omitting 100 burn-in samples. Below: Trace of the sampler. The samples are based on 100 observations of a forward simulated process with $\theta = 5$}
\label{fig:gibbs_hyperbolic}
\end{figure}

These simulations suggest that the results of this article may also hold on the Poincar\'e disk, which is an example of a noncompact manifold.

 \section{Other types of conditionings}
 \label{sec:extensions}

Up to this point, we have considered the problem of diffusion bridge simulation, where $h(t,x)=p(t,x; T, x_T)$. We now consider variants of this problem.

First consider the following model
\begin{align*}
    (X_t,\, t\in [0,T]) & \sim \bb{P} \\
    V\mid X_T=x_T &\sim q_{V\mid X_T}(\cdot \mid x_T)
\end{align*}
for a conditional density $q_{V\mid X_T}$. This model corresponds to observing a realisation $v$ from  the conditional distribution given $V\mid X_T=x_T$. Suppose interest lies in the distribution of $X_T \mid V=v$. 
If we let the measure $\bb{P}^h$ be the measure induced by \begin{equation}\label{eq:hfrommu}
    h(t,x) = \int p(t,x;T,z) q_{V\mid X_T}(v\mid z)\dd \mathrm{Vol}_M(z) .
 \end{equation}  then $\mathfrak A h= 0$ and $h \in \mathcal H$.  
 
As in Example 2.4 in \cite{corstanje2021conditioning}, for  test functions $f\colon M \to \mathbb R$ 
  \[ \bb{E}^h  f(X_t) =\int_M \bb{E}\left( f(X_t)\mid X_T=z\right) q_{X_T\mid V}(z\mid v)\dd\mathrm{Vol}_M(z) , \]
		where
		\[q_{X_T\mid V}(z\mid v) = \frac{p(0,x_0;T,z)q_{V\mid X_T}(v\mid z) }{h(0,x_0)}\]
is the conditional density of $X_T\mid V=v$ with respect to the volume measure. Hence, sampling under $\bb{P}^h$ is equivalent to first sampling $x_T$ from the distribution of $X_T \mid V=v$ and subsequently sampling a bridge connecting $x_0$ at time $0$ and $z$ at time $T$.

In a variant, one can take any density function $q(z)$ with respect to the volume measure to obtain a transformation as follows:
\begin{equation} \label{eq:hforcing}
h(t,x) = \int \frac{p(t,x;T,z)q(z)}{p(0,x_0;T,z)}\dd\mathrm{Vol}_M(z). 
\end{equation}
This corresponds to a conditioning where $X$ is started at $X_0 = x_0$ and forced into the marginal distribution with density $q$ at time $T$, such as seen in \cite{baudoin2002}. This is important for generative diffusion models on manifolds.

\section{Proofs}
\label{sec:proofs}

The proofs in Sections \ref{subsec:proofinvariance2} and \ref{app:proof-of-mainthm} are derived from the following theorem, which  is a slight reformulation of Theorem 3.3 of \cite{corstanje2021conditioning}. 
		\begin{thm}
		\label{thm:absolute-continuity} Let $g, h \in \mathcal{H}$.
		Suppose there exists a $t_0 \in [0,T)$ and a family of $\mathcal{F}_t$-measurable events $\{A_k^t\}_k$ for all $t\in[t_0,T)$ such that 
		\begin{enumerate}[label={\color{blue} (\ref{thm:absolute-continuity}\alph*)}]	
		\item \label{ass:main-events}
			For all $k$ and $t_0\leq s\leq t< T$, $A_k^t \subseteq A_{k+1}^t$ and $A_k^t\subseteq A_k^s$. We denote the limits by 
			\[ A^t = \bigcup_k A_k^t ,\qquad A_k^T = \bigcap_{t<T}A_k^t,\qquad A^T = \bigcap_{t<T}A^t \]
		\item \label{ass:main-expectations}
			$\bb{E}\left(g(t,X_t)\mid \mathcal{F}_s\right)$ is $\bb{P}$-almost surely bounded uniformly in $t$ and for all $s\in[t_0,T)$ and $x\in M$,
			\[ \lim_{t\uparrow T} \bb{E}\left(g(t,X_t)\mid X_s=x\right) = h(s,x) \]
		\item \label{ass:main-fraction}
			\[ \lim_{k\to\infty}\lim_{t\uparrow T} \bb{E}^h \left(\frac{g(t,X_t)}{h(t,X_t)}\ind_{A_k^t}\right) = 1 \]
		\item \label{ass:main-likelihood}
			For all $k$, $\Psi_{T,g}(X)\ind_{A_k^t}$, with $\Psi_{T,g}(X)$ defined in \eqref{eq:Psi},
			is $\bb{P}^g$-almost surely uniformly bounded in $t$ on $[t_0,T)$. 
        \item \label{ass:P(AT)=1}
        $\bb{P}^g(A^T)=1$. 
		\end{enumerate}
		Then $\bb{P}_T^h \sim \bb{P}_T^g$ with
		\begin{equation}
			\label{eq:radon-nikodym-closed}
				\der{\bb{P}^h_T}{\bb{P}^g_T} = \frac{g(0,X_0)}{h(0,X_0)}\exp\left(\int_0^T \frac{\scr{A}g(s,X_s)}{g(s,X_s)}\dd s\right)
			\end{equation}
		\end{thm}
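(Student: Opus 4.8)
\textbf{Proof plan for \Cref{thm:absolute-continuity}.}

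The plan is to use the change of measure \eqref{eq:change-of-measure} together with the martingale property built into the definition of $\mathcal H$, and then to push the equivalence of the measures $\bb P^h_t$ and $\bb P^g_t$ on the half-open interval $[0,T)$ up to the closed endpoint $T$ by a careful limiting argument using the hypotheses \ref{ass:main-events}--\ref{ass:P(AT)=1}. The first step is purely algebraic: since both $g,h\in\mathcal H$, \Cref{prop:likelihood} already gives, for every $t<T$, that $\bb P^h_t\ll\bb P^g_t$ with density $E^h_t(X)/E^g_t(X)$, which simplifies (using $\scr A h=0$ is \emph{not} needed here, only the definitions \eqref{eq:def-Mtf-Etf}) to
\[
\der{\bb P^h_t}{\bb P^g_t}=\frac{g(0,X_0)}{h(0,X_0)}\,\frac{h(t,X_t)}{g(t,X_t)}\exp\left(\int_0^t\frac{\scr A g(s,X_s)}{g(s,X_s)}\dd s\right)=:\frac{g(0,X_0)}{h(0,X_0)}\,\Xi_t.
\]
The goal is to show that the limit of $\Xi_t$ as $t\uparrow T$ exists $\bb P^g$-a.s., that it equals $\exp\left(\int_0^T\scr A g(s,X_s)/g(s,X_s)\,\dd s\right)$ (i.e.\ the factor $h(t,X_t)/g(t,X_t)\to 1$ in the relevant sense), and that this limit is indeed the Radon--Nikodym derivative $\dd\bb P^h_T/\dd\bb P^g_T$, with the two measures being mutually equivalent rather than merely absolutely continuous.

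For the existence and identification of the limit I would argue as follows. By \ref{ass:main-likelihood}, on each event $A^t_k$ the exponential factor $\Psi_{T,g}(X)\ind_{A^t_k}$ is $\bb P^g$-a.s.\ bounded uniformly in $t$; combined with hypothesis \ref{ass:main-expectations}, which controls $\bb E(g(t,X_t)\mid\mathcal F_s)$ uniformly and identifies its limit as $h(s,x)$, one gets that the family $\{\Xi_t\ind_{A^t_k}\}_{t<T}$ is $\bb P^g$-uniformly integrable for each fixed $k$. The martingale structure — $E^h_t(X)$ is a $\bb P$-martingale and $E^g_t(X)$ a $\bb P$-martingale, so $\Xi_t$ is a $\bb P^g$-martingale up to the constant — then gives $\bb P^g$-a.s.\ convergence of $\Xi_t$ on $A^T_k=\bigcap_{t<T}A^t_k$ by the martingale convergence theorem, and since $\bb P^g(A^T)=1$ by \ref{ass:P(AT)=1}, we obtain $\bb P^g$-a.s.\ convergence everywhere. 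To identify the limit, test against bounded $\mathcal F_s$-measurable functions $\psi$ (for $s<T$ fixed): $\bb E^h\psi = \bb E^g[\psi\,\Xi_t\cdot g(0,X_0)/h(0,X_0)]$ for all $t\in(s,T)$, and letting $t\uparrow T$ using uniform integrability (with the truncation by $A^t_k$ handled via \ref{ass:main-fraction}, which says exactly that the mass lost by truncating is negligible, $\lim_k\lim_t\bb E^h[(g/h)(t,X_t)\ind_{A^t_k}]=1$) yields $\bb E^h\psi=\bb E^g[\psi\cdot(g(0,X_0)/h(0,X_0))\exp(\int_0^T\scr A g/g\,\dd s)]$. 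Since this holds for all such $\psi$ and all $s<T$, and $\bigcup_{s<T}\mathcal F_s$ generates $\mathcal F_T^-$ (which, for the natural filtration of a continuous process, suffices), we conclude \eqref{eq:radon-nikodym-closed} as the $\dd\bb P^h_T/\dd\bb P^g_T$ density. Mutual equivalence follows because the density is strictly positive $\bb P^g$-a.s.\ ($g,h>0$ and the exponential is finite, using \ref{ass:main-likelihood} to rule out the integral diverging to $-\infty$) and the symmetric argument with $g,h$ interchanged — or more simply, a strictly positive density on a probability space forces $\bb P^g_T\ll\bb P^h_T$ as well.

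The main obstacle I anticipate is the passage to the limit $t\uparrow T$ under the expectation — specifically, justifying the interchange of $\lim_{t\uparrow T}$ with $\bb E^g$ in $\bb E^h\psi=\bb E^g[\psi\,\Xi_t\cdot\mathrm{const}]$. The family $\Xi_t$ need not be uniformly integrable globally; this is exactly why the approximating events $A^t_k$ and hypotheses \ref{ass:main-fraction}, \ref{ass:main-likelihood} are present. The delicate point is the order of the double limit in \ref{ass:main-fraction}: one must first take $t\uparrow T$ (for fixed $k$, where uniform integrability from \ref{ass:main-likelihood}+\ref{ass:main-expectations} applies cleanly) and only then $k\to\infty$, and one must check that the $\mathcal F_s$-measurable test function $\psi$ does not interfere with the truncation — it does not, since $\psi$ is bounded and $A^t_k\in\mathcal F_t$ with $s<t$. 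A secondary subtlety is that $\Psi_{T,g}$ appearing in \eqref{eq:radon-nikodym-closed} must be reconciled with the $\exp(\int_0^T\scr A g/g)$ form; in the generality of this theorem (before specializing $\scr A$) these are just two names for the same exponential functional, so no work is needed here — the identification $\scr A g/g = V(\log g)$ is only invoked later in \Cref{thm:absolute-continuity-drift}. Beyond this, the argument is bookkeeping: assembling the uniform-integrability estimate, invoking martingale convergence, and verifying positivity of the limit density.
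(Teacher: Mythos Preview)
The paper does not actually prove \Cref{thm:absolute-continuity}: it is stated at the beginning of \Cref{sec:proofs} as ``a slight reformulation of Theorem 3.3 of \cite{corstanje2021conditioning}'' and then used as a black box in the proofs of \Cref{thm:absolute-continuity-drift} and \Cref{thm:invariance-of-GPs}. So there is no in-paper proof to compare your plan against.

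That said, your outline is the natural one and almost certainly tracks the argument in the cited reference: start from the explicit density \(E^h_t/E^g_t\) for \(t<T\), recognise it as a \(\bb P^g\)-martingale, pass to the limit \(t\uparrow T\) via martingale convergence plus a truncation/uniform-integrability argument organised around the events \(A^t_k\), and identify the limit by testing against bounded \(\mathcal F_s\)-measurable functions. Your identification of the main obstacle --- justifying the interchange of \(\lim_{t\uparrow T}\) and \(\bb E^g\), and the role of the double limit in \ref{ass:main-fraction} --- is exactly right.

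One small slip to flag: you write that ``using \(\scr A h=0\) is \emph{not} needed here'' and then display
\[
\frac{E^h_t}{E^g_t}=\frac{g(0,X_0)}{h(0,X_0)}\,\frac{h(t,X_t)}{g(t,X_t)}\exp\left(\int_0^t\frac{\scr A g}{g}\dd s\right).
\]
But from \eqref{eq:def-Mtf-Etf} the ratio \(E^h_t/E^g_t\) in general carries an extra factor \(\exp\!\big(-\int_0^t\scr A h/h\,\dd s\big)\); your displayed formula, and the target expression \eqref{eq:radon-nikodym-closed}, are the ones you get precisely when \(\scr A h=0\). (Note that \Cref{prop:likelihood}, which you invoke, explicitly assumes \(\scr A h=0\), and in every application of \Cref{thm:absolute-continuity} in the paper \(h\) is space--time harmonic.) This does not derail your plan, but you should either carry the extra exponential through the argument or make the harmonicity of \(h\) an explicit standing hypothesis.
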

	
\subsection{Proof of Theorem \ref{thm:invar}}\label{subsec:proofinvariance1}
	Suppose $X$ has infinitesimal generator $\left(\scr{L}_X, \scr{D}\left(\scr{L}_X\right)\right)$.
Let $f\colon N\to \bb{R}$ be such that $f\circ \Phi \in \scr{D}\left(\mathfrak{L}_X\right)$. Then the infinitesimal generator $\scr{L}_Y$ of $Y$ satisfies
		\begin{equation}
		\label{eq:invariance-generators}
			\begin{aligned}
				\scr{L}_Yf(y) &= \lim_{t\downarrow 0} \frac{\bb{E}\left(f(Y_t)\mid Y_0=y\right)-f(y)}{t} \\
				  &=  \lim_{t\downarrow 0} \frac{\bb{E} \left( (f\circ\Phi)(X_t)\mid X_0 = \Phi^{-1}(y)\right) - (f\circ\Phi)\left(\Phi^{-1}(y)\right)}{t} \\
				  &= \scr{L}_X \left(f\circ\Phi\right) \left(\Phi^{-1}(y)\right),
			\end{aligned}
		\end{equation}
For $f\colon [0,T] \times N \to \bb{R}$, let the pullback $\Phi^*f \colon [0,T] \times M\to \bb{R}$ given by  $(\Phi^* f)(t,x) = f(t,\Phi(x))$ be in the domain of $\scr{A}_X$. Then we have 
\[ \mathfrak{A}_Y f(t,y)= \left(\scr{A}_X (\Phi^* f)\right)\left(t,\Phi^{-1}(y)\right), \quad t\in [0,T],\quad y\in N.
	\]
Take	$f=\Phi_* h$, then $\Phi^* f(t,x)=(\Phi_* h)(t,\Phi(x))=h(t,x)$ is in the domain of $\scr{A}_X$, by assumption. The preceding display therefore gives
\begin{equation}\label{eq:trans_generators}
	(\scr{A}_Y \Phi_* h)(t,y) = (\scr{A}_X h)(t,\Phi^{-1}(y)) =\left(\Phi_* (\scr{A}_X h)\right)(y),
\end{equation}
	
where the second equality follows from the definition of $\Phi_* h$. 

We have (cf.\ \Cref{eq:def-Mtf-Etf})
\[ E_t^{\Phi_* h}(Y) = 	\frac{\Phi_* h(t,Y_t)}{\Phi_* h(0,Y_0)}\exp\left(-\int_0^t \frac{\scr{A}_Y(\Phi_* h)(s,Y_s)}{\Phi_* h(s,Y_s)}\dd s\right) 
\]
By \Cref{eq:trans_generators}, $\scr{A}_Y(\Phi_* h)(s,Y_s)=(\scr{A}_X h)(s,\Phi^{-1}(Y_s))= (\scr{A}_X h)(s,X_s)$. By definition of $\Phi_* h$ we have $\Phi_* h(t,Y_t)= h(t,X_t)$. Substituting these two results in the previous display we obtain $ E_t^{\Phi_* h}(Y) = E_t^h(X)$. 
This implies
\[ \mathbb{E}  \psi\left(\Phi(X^h)\right) = \mathbb{E} \left[ \psi(\Phi(X)) E_t^h(X)\right] =  \mathbb{E}\left[ \psi(Y) E_t^{\Phi_* h}(Y)\right].\]

\subsection{Proof of \Cref{thm:invariance-of-GPs}}\label{subsec:proofinvariance2}

We start with a lemma used in the proof.
\begin{lem}
		\label{prop:InvarianceEquivalenceProp1}
		$Y$ admits a transition density $p_Y(s,y;t,z) = p(s, \Phi^{-1}(y);t,\Phi^{-1}(z))$ with respect to the measure on $N$ obtained from the pushforward Riemannian volume measure on $M$ through $\Phi$, $\Phi_*\left(\mathrm{Vol}_M\right)$.

\end{lem}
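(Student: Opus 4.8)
The plan is to verify directly that the proposed formula $p_Y(s,y;t,z) = p(s,\Phi^{-1}(y);t,\Phi^{-1}(z))$ satisfies the defining property of a transition density for $Y$ with respect to the pushforward volume measure $\Phi_*(\mathrm{Vol}_M)$. Recall $Y_t = \Phi(X_t)$, so for any measurable $B \subseteq N$ and any $s \le t$,
\[
\bb{Q}\left(Y_t \in B \mid Y_s = y\right) = \bb{P}\left(X_t \in \Phi^{-1}(B) \mid X_s = \Phi^{-1}(y)\right),
\]
using that $\Phi$ is a diffeomorphism (in particular a bijection), so conditioning on $Y_s = y$ is the same event as conditioning on $X_s = \Phi^{-1}(y)$.

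Next I would invoke the assumption that $X$ has transition density $p$ with respect to $\mathrm{Vol}_M$, so the right-hand side equals $\int_{\Phi^{-1}(B)} p(s,\Phi^{-1}(y);t,x')\dd\mathrm{Vol}_M(x')$. The key step is then the change-of-variables identity for pushforward measures: for any nonnegative measurable $F$ on $N$,
\[
\int_{M} F(\Phi(x'))\dd\mathrm{Vol}_M(x') = \int_{N} F(z)\dd\bigl(\Phi_*\mathrm{Vol}_M\bigr)(z),
\]
which is essentially the definition of the pushforward measure. Applying this with $F(z) = \ind_B(z)\, p(s,\Phi^{-1}(y);t,\Phi^{-1}(z))$ and noting $x' \in \Phi^{-1}(B) \iff \Phi(x') \in B$ converts the integral over $\Phi^{-1}(B)$ against $\mathrm{Vol}_M$ into an integral over $B$ against $\Phi_*\mathrm{Vol}_M$:
\[
\int_{\Phi^{-1}(B)} p(s,\Phi^{-1}(y);t,x')\dd\mathrm{Vol}_M(x') = \int_{B} p(s,\Phi^{-1}(y);t,\Phi^{-1}(z))\dd\bigl(\Phi_*\mathrm{Vol}_M\bigr)(z).
\]
The integrand on the right is exactly $p_Y(s,y;t,z)$, which establishes the claim. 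Measurability of $(s,y;t,z)\mapsto p_Y$ follows from measurability of $p$ composed with the smooth map $\Phi^{-1}$.

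I do not anticipate a genuine obstacle here; the statement is essentially a bookkeeping exercise combining the bijectivity of $\Phi$ with the abstract change-of-variables formula for pushforward measures. The only point requiring a little care is making sure the reference measure is stated correctly — one must use $\Phi_*\mathrm{Vol}_M$ rather than $\mathrm{Vol}_N$ (these differ by the Jacobian of $\Phi$ unless $\Phi$ is a Riemannian isometry), and the formula for $p_Y$ is clean precisely because we have chosen the pushforward measure as the reference. If one instead wanted the density with respect to $\mathrm{Vol}_N$, an extra Radon--Nikodym factor $\dd(\Phi_*\mathrm{Vol}_M)/\dd\mathrm{Vol}_N$ would appear, but that is not needed for the subsequent application in the proof of \Cref{thm:invariance-of-GPs}.
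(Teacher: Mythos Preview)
Your proof is correct and essentially identical to the paper's: both compute $\bb{P}(Y_t\in B\mid Y_s=y)$ by rewriting it as $\bb{P}(X_t\in\Phi^{-1}(B)\mid X_s=\Phi^{-1}(y))$, expressing this via the transition density $p$ of $X$ against $\mathrm{Vol}_M$, and then applying the change-of-variables formula for the pushforward measure to rewrite the integral over $\Phi^{-1}(B)$ as an integral over $B$ against $\Phi_*(\mathrm{Vol}_M)$. Your additional remark about the Jacobian factor needed if one instead uses $\mathrm{Vol}_N$ is correct and worth noting, though the paper does not comment on it.
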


	\begin{proof}
		For $A\subseteq N$ measurable, $y\in N$ and $s\leq t$, we have
	\begin{align*} 
		\bb{P}(Y_t\in A\mid Y_s=y) &= \bb{P}\left( X_t \in \Phi^{-1}(A) \mid X_s=\Phi^{-1}(y) \right) \\
		&= \int_{\Phi^{-1}(A)} p\left(s, \Phi^{-1}(y); t,u\right)\dd \mathrm{Vol}_M(u)  \\
		&= \int_A p(s, \Phi^{-1}(y) ; t, \Phi^{-1}(z)) \dd \Phi_*\left(\mathrm{Vol}_M\right)(z),
		\end{align*}
  where the last equality follows from the substitution rule, see e.g. Proposition 4.4.12 of \cite{Lovett2010}
	\end{proof}

\begin{proof}[Proof of \Cref{thm:invariance-of-GPs}]
As \ref{ass:main-events} is satasfied by default, we check assumptions \ref{ass:main-expectations}, \ref{ass:main-fraction} \ref{ass:main-likelihood} and \ref{ass:P(AT)=1} for $Y$, $\Phi_* g$, $\Phi_* h$ and events $\{A^t_k\}_k$. 

Assumption \ref{ass:main-expectations} is satisfied since
	\begin{align*}
		\bb{E}\left(\Phi_* g(t,Y_t)\mid Y_s=y\right) &=  \int g(t, \Phi^{-1}(z))p_Y(s,y;T,z)\dd\Phi_*\left(\mathrm{Vol}_M\right)(z)\\
		&= \int g(t, \Phi^{-1}(z))p(s, \Phi^{-1}(y);T,\Phi^{-1}(z)) \dd\Phi_*\left(\mathrm{Vol}_M\right)(z)\\
		&= \int g(t, x)p(s, \Phi^{-1}(y);T, x) \dd\mathrm{Vol}_M(x)\\ &  \to h(s, \Phi^{-1}(y)) = \Phi_* h(s,y),\quad \text{as}\:  t\uparrow T
	\end{align*}

	Assumption \ref{ass:main-fraction} is satisfied  since 
\[	\lim_{k\to\infty}\lim_{t\uparrow T} \bb{E}^{\Phi_* h}\left( \frac{\Phi_* g(t, Y_t) }{\Phi_*{h}(t, Y_t)}\ind_{A_k^t} \right) = \lim_{k\to\infty}\lim_{t\uparrow T} \bb{E}^h\left( \frac{g(t, X_t)}{h(t, X_t)}\ind_{A_k^t} \right) = 1. \]
	Here, the first equality follows from $E_t^{\Phi_* h}(Y) = E_t^h(X)$, which was derived in the proof of \Cref{thm:invar}.  The final equality follows from Assumption \ref{ass:main-fraction}.

To see that Assumption \ref{ass:main-likelihood} is satisfied, note that from the proof of \Cref{thm:invar}, we obtain  $\scr{A}_Y(\Phi_* g)(s,Y_s)= (\scr{A}_X g)(s,X_s)$
	and thus 
	\[ \exp\left(\int_0^t \frac{\mathfrak{A}_Y\Phi_* g(s,Y_s)}{\Phi_* g(s, Y_s)} \dd s \right) = \exp\left( \int_0^t \frac{\mathfrak{A}_X g(s,X_s)}{g(s, X_s)}\dd s \right) ,\] which verifies the assumption. 

For \ref{ass:P(AT)=1}, we note that $E_t^g(X) = E_t^{\Phi_* g}(\Phi(X))$ and thus the measures $\bb{Q}^{\Phi_* g}$ and $\bb{P}^g$ define the same probability $1$ sets. 
\end{proof}

\subsection{Proof of \Cref{thm:absolute-continuity-drift}}
        \label{app:proof-of-mainthm}

       We also need the following two results where we assume $M$ to be a compact manifold.

        \begin{thm}[Theorem 5.3.4 of \cite{Hsu2002}]
		\label{thm:bounds-on-heat-kernel}
			For all $t \in(0,T)$ and $x,y\in M$,
			\[ \frac{C_1}{t^{d/2}} \exp\left(-\frac{\dist(x,y)^2}{2t}\right) \leq \kappa (t,x,y) \leq \frac{C_2}{t^{(2d-1)/2}} \exp\left(-\frac{\dist(x,y)^2}{2t}\right) \]
		\end{thm}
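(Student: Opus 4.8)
The statement is the classical two-sided Gaussian bound for the heat kernel of $\tfrac12\Delta$ on a compact Riemannian manifold, with the \emph{sharp} constant $1/2$ in the Gaussian exponent (which is bought at the price of non-sharp polynomial prefactors). The plan is to split the time interval into a ``large-time'' régime $t\in[t_1,T)$ and a ``small-time'' régime $t\in(0,t_1]$ for a fixed small $t_1>0$. The large-time case is soft: $\kappa$ is jointly continuous and strictly positive on the compact set $[t_1,T]\times M\times M$, hence pinched there between two positive constants, and since $0\le\dist(x,y)\le\operatorname{diam}(M)<\infty$ the factors $\exp(-\dist(x,y)^2/(2t))$ are likewise pinched between two positive constants; adjusting $C_1,C_2$ absorbs the prefactors $t^{-d/2}$ and $t^{-(2d-1)/2}$. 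All the real content is in the small-time régime. (An alternative for the whole theorem is the Li--Yau gradient estimate / parabolic Harnack route, which uses only $\mathrm{Ric}\ge -K$ and gives $t^{-d/2}$ on both sides, but with non-sharp constants in the exponent unless $\mathrm{Ric}\ge 0$; Hsu instead keeps the sharp exponent.)

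For the \textbf{small-time upper bound} I would argue in two steps. First, an on-diagonal bound $\kappa(t,x,x)\le Ct^{-d/2}$ for $t\in(0,t_1]$, uniform in $x$: this follows from the Nash-type inequality on the compact manifold $M$, $\norm{f}_2^{2+4/d}\le C(\norm{\nabla f}_2^2+\norm{f}_2^2)\norm{f}_1^{4/d}$ for $f\in C^\infty(M)$ (the lower-order term $\norm{f}_2^2$ is forced on a compact manifold), which by a standard iteration (Nash / Carlen--Kusuoka--Stroock) upgrades to the ultracontractivity estimate $\norm{e^{-t\Delta/2}}_{L^1\to L^\infty}\le Ct^{-d/2}$ for $0<t\le t_1$; evaluating the kernel of this operator on the diagonal gives the claim (and in fact $\kappa(t,x,y)\le Ct^{-d/2}$ for all $x,y$). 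Second, the Gaussian off-diagonal factor is produced from the on-diagonal bound by Davies' exponential-perturbation method (equivalently Grigor'yan's integrated maximum principle): conjugating the semigroup by $e^{\alpha\psi}$ with $\abs{\nabla\psi}\le 1$, optimizing over $\alpha$ and over $\psi$ approximating $\dist(\cdot,y)$, and using one or two applications of Chapman--Kolmogorov $\kappa(t,x,y)=\int_M\kappa(t/2,x,z)\kappa(t/2,z,y)\dd\mathrm{Vol}_M(z)$ to pass from an $L^2$ to an $L^\infty$ bound, yields $\kappa(t,x,y)\le Ct^{-\nu}\exp(-\dist(x,y)^2/(2t))$; tracking the powers of $t$ lost in the conjugation/Cauchy--Schwarz bookkeeping in Hsu's version of the argument produces exactly $\nu=(2d-1)/2$.

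For the \textbf{small-time lower bound} I would use a chaining argument along a minimizing geodesic. The building block is a near-diagonal lower bound: there exist $c,\delta>0$ with $\kappa(s,z,z')\ge c\,s^{-d/2}$ whenever $\dist(z,z')^2\le\delta s$, $0<s\le t_1$, together with the refinement $\kappa(s,z,z')\ge(2\pi s)^{-d/2}\exp(-\dist(z,z')^2/(2s))(1-o(1))$ as $s\downarrow0$, uniformly for $\dist(z,z')$ small — obtained either from the Minakshisundaram--Pleijel parametrix (whose leading coefficient is $1$ on the diagonal) or probabilistically from a Gaussian lower bound for Brownian motion in a small normal ball plus a curvature comparison. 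Now fix $x,y$ with $\dist(x,y)^2>\delta t$, let $\gamma$ be a unit-speed minimizing geodesic from $x$ to $y$, and place equally spaced points $x=z_0,\dots,z_N=y$ on $\gamma$, so $\dist(z_{i-1},z_i)=\dist(x,y)/N$ and $\dist(z_{i-1},z_i)^2/(t/N)=\dist(x,y)^2/(Nt)\le\delta$ once $N$ is large enough. Iterating Chapman--Kolmogorov at time step $t/N$ and restricting each intermediate integration to a ball of radius $\asymp\sqrt{t/N}$ about $z_i$ (on which the relevant pairs stay near-diagonal at time $t/N$) gives
\[
\kappa(t,x,y)\ \ge\ \big(\text{vol.\ factors}\big)\prod_{i=1}^{N}\big(\text{near-diagonal lower bound at }(t/N,\,z_{i-1},z_i)\big).
\]
Using the refined near-diagonal bound and the equipartition identity $\sum_{i=1}^N\dist(z_{i-1},z_i)^2/(2t/N)=N\cdot(\dist(x,y)/N)^2/(2t/N)=\dist(x,y)^2/(2t)$, the exponential factors multiply to exactly $\exp(-\dist(x,y)^2/(2t))$, while the polynomial and volume factors, after taking $N$ as large as the parametrix errors permit, collapse to a single $c_1 t^{-d/2}$.

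\textbf{Main obstacle.} The delicate point in both directions is pinning the Gaussian exponent to the sharp value $1/2$. In the upper bound, Davies' method naturally yields only $1/(2+\eps)$; forcing the constant $1/2$ is exactly what costs the worse polynomial prefactor $t^{-(2d-1)/2}$, an artefact of the conjugation bookkeeping. In the lower bound, matching the rate $\dist(x,y)^2/(2t)$ (rather than merely some positive rate, which the crude chaining gives effortlessly) requires controlling the $o(1)$ errors of the parametrix \emph{uniformly over the} $\asymp\dist(x,y)^2/t$ \emph{short steps of the chain} — which is why the argument is confined to small $t$ and why $N\to\infty$ must be taken carefully, lest the accumulated volume and error factors swamp the gain. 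Everything else (the large-time régime, ultracontractivity $\Rightarrow$ on-diagonal bounds, small-ball volume estimates from curvature bounds) is routine.
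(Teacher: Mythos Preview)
The paper does not prove this statement; it is quoted verbatim as Theorem 5.3.4 of \cite{Hsu2002} and used as a black box in the proof of \Cref{thm:absolute-continuity-drift}. There is therefore nothing in the paper to compare your argument against.

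That said, your sketch is a faithful outline of the standard route (and of Hsu's own proof): on-diagonal ultracontractivity from a Nash inequality, Davies' exponential perturbation to produce the Gaussian factor in the upper bound (with the loss in the polynomial prefactor exactly accounting for the $(2d-1)/2$ exponent), and a chaining argument along a minimizing geodesic using the Minakshisundaram--Pleijel short-time expansion for the lower bound. Your identification of the main obstacle --- keeping the sharp constant $1/2$ in the exponent on both sides, at the cost of the non-sharp polynomial prefactor in the upper bound and of careful error control over the $\asymp \dist(x,y)^2/t$ chain steps in the lower bound --- is accurate. For the purposes of this paper no proof is needed; the authors only require the statement as input.
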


        \begin{thm}[Theorem 5.5.3 of \cite{Hsu2002}]
        \label{thm:upper-bound-gradlogg}
            There is a constant $C$ such that for all $t\in(0,T)$ and $x,y\in  M$, 
            \[ \abs{\nabla\log\kappa(t,x,y)} \leq C\left(\frac{\dist(x, y)}{t} + \frac{1}{\sqrt{t}}\right)\]
        \end{thm}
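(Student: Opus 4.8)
\textbf{Plan of proof for \Cref{thm:absolute-continuity-drift}.}

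The strategy is to verify the five hypotheses \ref{ass:main-events}--\ref{ass:P(AT)=1} of \Cref{thm:absolute-continuity} for the process $X = \pi(U)$ with the choices of $g$ from \eqref{eq:heatkernel} and $h$ as given; once these are checked, \Cref{thm:absolute-continuity} immediately yields $\bb{P}_T^h \sim \bb{P}_T^g$ with Radon--Nikodym derivative \eqref{eq:radon-nikodym-closed}. It then remains only to simplify the exponential factor: since $g(t,x) = \kappa(T-t,x,x_T)$ and $\kappa$ solves the heat equation \eqref{eq:heat-equation} in its time/space arguments, one has $\partial_t g + \tfrac12\Delta g = 0$, hence $\scr{A}g = \partial_t g + Vg + \tfrac12\Delta g = Vg$. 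Therefore $\scr{A}g/g = V(\log g)$ and $\exp\left(\int_0^T \frac{\scr{A}g(s,X_s)}{g(s,X_s)}\dd s\right) = \Psi_{T,g}(X)$ as defined in \eqref{eq:Psi}, giving \eqref{eq:radon-nikodym-manifolds}. The final formula \eqref{eq:likelihood} follows by taking $\bb{E}^g$ of both sides of \eqref{eq:radon-nikodym-manifolds} and using that $h(0,x_0) = p(0,x_0;T,x_T)$.

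The bulk of the work is the verification of the hypotheses, and here the heat-kernel estimates \Cref{thm:bounds-on-heat-kernel} and \Cref{thm:upper-bound-gradlogg}, together with the comparison \Cref{ass:transitiondensity} between $p$ and $\kappa$, are the essential tools. I would proceed as follows. For the events, take $A_k^t = \{\sup_{s\le t} \rho(X_s) \le k\}$ or simply $A_k^t = \Omega$ for all $k$ — since $M$ is compact, distances are bounded, so the localization is essentially trivial and \ref{ass:main-events} holds by default (as the proof of \Cref{thm:invariance-of-GPs} also notes). For \ref{ass:main-expectations}: $g(t,X_t) = \kappa(T-t,X_t,x_T)$, which by \Cref{thm:bounds-on-heat-kernel} is, for $t$ bounded away from $T$, bounded; one then needs $\lim_{t\uparrow T}\bb{E}(g(t,X_t)\mid X_s=x) = h(s,x)$. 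Writing this conditional expectation as $\int_M p(s,x;t,y)\,\kappa(T-t,y,x_T)\dd\mathrm{Vol}_M(y)$ and using the Chapman--Kolmogorov-type identity for the unconditioned transition density $p$ together with the fact that $h(s,x) = p(s,x;T,x_T)$, the limit is an approximate-identity argument: $\kappa(T-t,\cdot,x_T)$ concentrates at $x_T$ as $t\uparrow T$, and $p(s,x;\cdot,\cdot)$ is continuous. The domination needed to pass the limit inside comes from \Cref{ass:transitiondensity} and \Cref{thm:bounds-on-heat-kernel}. The uniform boundedness of $\bb{E}(g(t,X_t)\mid\mathcal F_s)$ follows the same way. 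For \ref{ass:main-likelihood}: $\Psi_{T,g}(X) = \exp(\int_0^T V(\log g)(s,X_s)\dd s)$, and $V(\log g)(s,x) = \langle V(x), \nabla_x \log\kappa(T-s,x,x_T)\rangle$ is controlled in absolute value by $\|V\|_\infty\cdot C(\rho(x,x_T)/(T-s) + (T-s)^{-1/2})$ via \Cref{thm:upper-bound-gradlogg}; since $M$ is compact, $\rho(\cdot,x_T)$ is bounded, so the integrand is $O((T-s)^{-1})$ — which is \emph{not} integrable on its own. This is the crux.

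\textbf{The main obstacle} is exactly this potential non-integrability of the guiding drift near the terminal time: the bound $|V(\log g)(s,X_s)| \lesssim (T-s)^{-1}$ alone does not make $\int_0^T V(\log g)(s,X_s)\dd s$ finite. The resolution must exploit that under $\bb{P}^g$ (or after conditioning), $X_s \to x_T$ as $s\uparrow T$, so that $\rho(X_s,x_T)\to 0$ and the first term $\rho(X_s,x_T)/(T-s)$ is of order $(T-s)^{-1/2}$ in probability — one expects $\rho(X_s,x_T)^2 = O(T-s)$ by the Gaussian-type behaviour of the bridge. Concretely, I would control $\int_0^T \rho(X_s,x_T)/(T-s)\dd s$ under $\bb{P}^g$ using the lower heat-kernel bound to identify $\bb{P}^g$ near time $T$ with a Brownian-bridge-like law, apply a time-reversal or an $L^1$/$L^2$ estimate on $\rho(X_s,x_T)$, and invoke a Borel--Cantelli / maximal-inequality argument on dyadic intervals $[T-2^{-n},T-2^{-n+1}]$ to get $\bb{P}^g$-a.s. finiteness and the required uniform-in-$t$ boundedness on $[t_0,T)$. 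For \ref{ass:main-fraction}, one writes $\bb{E}^h(g(t,X_t)/h(t,X_t)\ind_{A_k^t})$, uses $h(t,X_t)^{-1}$-control via \Cref{ass:transitiondensity} (which gives $p \le \gamma_1 \kappa(\gamma_2\cdot,\cdot,\cdot)e^{-\gamma_3\cdot}$, hence $h(t,x) \lesssim \kappa(\gamma_2(T-t),x,x_T)$) against the lower bound on $g(t,x) = \kappa(T-t,x,x_T)$ from \Cref{thm:bounds-on-heat-kernel}, and checks the ratio is bounded and tends to $1$ as $t\uparrow T$ then $k\to\infty$ by dominated convergence. Finally \ref{ass:P(AT)=1} is automatic with the trivial choice of events. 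I expect the delicate near-$T$ integrability estimate for \ref{ass:main-likelihood} (and the matching control in \ref{ass:main-fraction}) to absorb most of the proof's length.
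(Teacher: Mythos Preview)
Your proposal addresses \Cref{thm:absolute-continuity-drift}, not the displayed statement \Cref{thm:upper-bound-gradlogg} (which is a quoted result from Hsu and carries no proof in the paper). Comparing against the paper's proof of \Cref{thm:absolute-continuity-drift}: the overall strategy and your identification of the main obstacle are correct, but the choice of localizing events is where the plan breaks down.

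You propose $A_k^t = \Omega$, reasoning that compactness makes the localization ``essentially trivial''. It is not: the events are there to encode the \emph{rate} at which $\dist(X_s,x_T)\to 0$ as $s\uparrow T$, not merely its boundedness. The paper takes $\zeta(t,x) = -(T-t)^\eps \log g(t,x)$ with $\eps\in(0,1/2)$ and sets $A_k^t = \{\sup_{t_0\le s<t}\zeta(s,X_s)\le k\}$. The lower heat-kernel bound in \Cref{thm:bounds-on-heat-kernel} then forces $\dist(X_s,x_T)^2 \lesssim (T-s)^{1-\eps}$ on $A_k^t$, so \Cref{thm:upper-bound-gradlogg} yields $|V(\log g)(s,X_s)|\ind_{A_k^t} \lesssim (T-s)^{-(1+\eps)/2}$, which \emph{is} integrable---this is precisely how \ref{ass:main-likelihood} is obtained. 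The cost is that \ref{ass:P(AT)=1} (equivalently $\sup_t\zeta_t<\infty$ $\bb{P}^g$-a.s.) becomes the real work: it is the separate \Cref{thm:probability1}, proved by an It\^o expansion of $\zeta_t$ under $\bb{P}^g$ combined with a generalized Gr{\"o}nwall inequality. Likewise \ref{ass:main-fraction} is not handled by a ratio bound---your use of \Cref{ass:transitiondensity} gives an \emph{upper} bound on $h$, hence the wrong direction for controlling $g/h$; instead the paper splits off $\bb{E}^h\bigl(\tfrac{g}{h}\ind_{(A_k^t)^c}\bigr)$, introduces the stopping time $\sigma_k=\inf\{t:\zeta_t>k\}$, and bounds this term via Chapman--Kolmogorov and the two-sided heat-kernel estimates, again relying on the specific structure of $A_k^t$. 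Your alternative route---a direct dyadic Borel--Cantelli estimate on $\rho(X_s,x_T)$ under $\bb{P}^g$---would, if completed, amount to reproving \Cref{thm:probability1} by other means; but note that under $\bb{P}^g$ the process still carries the drift $V$ in addition to $\nabla\log g$, so it is not a pure Brownian bridge and the rate estimate is not available off the shelf.
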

        
        \begin{proof}[Proof of \Cref{thm:absolute-continuity-drift}]
            Let $\eta(t) = (T-t)^\eps$ for $\eps \in(0,1/2)$ and define the function 
            \begin{equation}
                \label{eq:defzeta}
                \zeta(t,x) = -\eta(t)\log g(t,x),
            \end{equation}
            We recall that $g(t,x) = \kappa(T-t, x, x_T)$. It follows from \Cref{thm:bounds-on-heat-kernel} that 
            \begin{equation}
            \label{eq:zeta-bounds}
                \beta_1(t)+\eta(t)\frac{\dist(x,x_T)^2}{2(T-t)} \leq \zeta(t,x) \leq \beta_2(t) + \eta(t)\frac{\dist(x,x_T)^2}{2(T-t)}        
            \end{equation}
            where 
            \begin{equation}
            \label{eq:betas}
                \begin{aligned}
                    \beta_1(t) &= -\eta(t) \log\left( \frac{C_2}{(T-t)^{(2d-1)/2}}\right) \\
                    \beta_2(t) &= -\eta(t) \log\left( \frac{C_1}{(T-t)^{d/2}}\right)
                \end{aligned}
            \end{equation}
            Now let $t_0\in[0,T)$ be such that $\beta_2(t)\geq \beta_1(t) \geq 0$ for all $t\in[t_0,T)$ and observe that $\eta$ and $t_0$ are such that
            \begin{enumerate}[label={\color{blue} (\textit{\roman*})}]	
                \item \label{obs:zeta-positive} $\eta(t)\geq 0$ and $\zeta(t,x)\geq 0$ for all $t\in[t_0,T)$
                \item \label{obs:integrable} $t\mapsto \frac{\eta(t)}{T-t}$ is integrable on $[t_0,T)$
                \item \label{obs:logbounded} $t\mapsto \eta(t)\log\left(\frac{1}{T-t}\right)$ is bounded on $[t_0,T)$
            \end{enumerate}
            Define the collection of events $A_k^t$ as 
            \begin{equation}
                \label{eq:defAkt}
                A_k^t = \left\{ \sup_{t_0\leq s<t} \zeta(s,X_s) \leq k \right\}, \qquad t\in[t_0,T), k=1,2,\dots.
            \end{equation}

        Then \ref{ass:main-events} is satisfied by construction. By Theorem 4.1.4 (3) of \cite{Hsu2002},
	\begin{equation}
	\label{eq:proof-of-guided-limit}
		\begin{aligned}
			\lim_{t\uparrow T} \bb{E}\left(g(t,X_t) \mid X_s=x\right) &= \lim_{t\uparrow T} \int \kappa({T-t},y, x_T)p(s,x;t,y) \dd \mathrm{Vol}_M(y) 
			= p(s,x;T,x_T) = h(s,x) 
		\end{aligned}
	\end{equation}
	and thus \ref{ass:main-expectations} is satisfied as well. For \ref{ass:main-fraction}, we first note that 
	\[ \bb{E}^h \left( \frac{g(t,X_t)}{h(t,X_t)} \ind_{A_k^t} \right) = \bb{E}^h \left(\frac{g(t,X_t)}{h(t,X_t)}\right) - \bb{E}^h \left( \frac{g(t,X_t)}{h(t,X_t)}\ind_{(A_k^t)^c}\right) \]
	By \Cref{lem:limitfraction}, the first term tends to 1 as $t\uparrow T$. Therefore it suffices to show that the second term tends to $0$ upon taking $\lim_{k\to\infty}\lim_{t\uparrow T}$ to prove that \ref{ass:main-fraction} is satisfied.  To show this, we consider the sequence of stopping times $\sigma_k = T\wedge \inf\{t\geq 0 : \zeta(t,X_t) > k\}$ and observe that $(A_k^t)^c = \{t>\sigma_k\}$. Now 
	\begin{align*}
		\bb{E}^h \left( \frac{g(t,X_t)}{h(t,X_t)}\ind_{\{t>\sigma_k\}}\right) &= \bb{E}\left(\frac{g(t,X_t)}{h(0,x_0)}\ind_{\{t>\sigma_k\}}\right) \\ 
		&= \bb{E}\left( \frac{ \ind_{ \{t > \sigma_k\} } }{h(0,x_0)} \bb{E}\left( g(t,X_t) \mid\mathcal{F}_{\sigma_k}\right)\right) \\
		&= \bb{E}\left(	 \frac{ \ind_{ \{t > \sigma_k\} } }{h(0,x_0)} \int_M p(\sigma_k, X_{\sigma_k} ; t, z)g(t,z)\dd z \right) 
	\end{align*}
	Here, the first equality follows from applying the change of measure \eqref{eq:change-of-measure} with $\scr{A}h=0$. By \Cref{ass:transitiondensity}, we obtain an upper bound given by 
	\[ \bb{E}\left(\frac{\ind_{\{t>\sigma_k\}}}{h(0,x_0)}\gamma_1e^{-\gamma_3(t-\sigma_k)}\int_M \kappa(\gamma_2(t-\sigma_k), X_{\sigma_k},z)\kappa(T-t, z,x_T)\dd \mathrm{Vol}_M(z)   \right) \]
	It follows from the Chapman-Kolmogorov relations, see e.g. Theorem 4.1.4 (5) of \cite{Hsu2002}, that this is equal to 
	\[ \bb{E}\left( \frac{\ind_{\{t>\sigma_k\}}}{h(0,x_0)}\gamma_1e^{-\gamma_3(t-\sigma_k)} \kappa\left(\gamma_2(t-\sigma_k)+T-t, X_{\sigma_k}, x_T)\right) \right). \]
	By \Cref{thm:bounds-on-heat-kernel}, we can find a positive constant $\gamma_4$ such that this expression is bounded by  
	\[\begin{gathered} \bb{E}\left( \frac{\ind_{\{t>\sigma_k\}}}{h(0,x_0)}\gamma_1e^{-\gamma_3(t-\sigma_k)} \gamma_4\left(\gamma_2(t-\sigma_k)+T-t\right)^{-(2d-1)/2}\right.  \\ \left.
  \times \exp\left(-\frac{\dist(X_{\sigma_k},x_T)^2}{2\gamma_2(t-\sigma_k)+2(T-t)}\right) \right) 
        \end{gathered}\]
	Now observe that 
	\begin{enumerate} 
		\item Since $X$ is continuous in time, $\zeta\left(\sigma_k, X_{\sigma_k}\right) = k$. It thus follows from \eqref{eq:zeta-bounds} that 
        \[\frac{k-\beta_2(\sigma_k)}{\eta(\sigma_k)}\leq \frac{\dist(X_{\sigma_k},x_T)^2}{2(T-\sigma_k)} \leq \frac{k-\beta_1(\sigma_k)}{\eta(\sigma_k)} \]
		\item $e^{-\gamma_3(t-\sigma_k)} \leq 1$ on $\{t>\sigma_k\}$ 
		\item  Let $\underline{\gamma}=\gamma_2\wedge 1$ and $\bar{\gamma}=\gamma_2\vee 1$. Then $\underline{\gamma}(T-\sigma_k) \leq \gamma_2(t-\sigma_k)+T-t\leq \bar{\gamma}(T-\sigma_k)$. 
	\end{enumerate}
    It now follows from the preceding, combined with observations 1--3 that a positive constant $\gamma_5$ exists such that 
    \begin{equation*}
    \begin{aligned}
        \bb{E}^h\left( \frac{g(t,X_t)}{h(t,X_t)}\ind_{\{ t>\sigma_k \}}\right) &\leq \bb{E}\left(\gamma_5\ind_{\{t>\sigma_k\}} (T-\sigma_k)^{-(2d-1)/2} \exp\left(- \frac{k-\beta_2(\sigma_k)}{\eta(\sigma_k)\bar{\gamma}}\right)\right) \\
        &= \bb{E}\left( \gamma_5 C_1^{-1/\bar\gamma} \ind_{\{t>\sigma_k\}} (T-\sigma_k)^{ -\frac{2d-1}{2}-\frac{d}{2\bar\gamma}} \exp\left( -\frac{k}{\eta(\sigma_k)\bar\gamma}\right)\right)  
    \end{aligned}    
    \end{equation*}
    We now substitute $\eta(t) = (T-t)^\eps$. Upon noting that trivially $\tau := T-\sigma_k \in [0,T]$, we deduce that 
    \[ \bb{E}^h \left( \frac{g(t,X_t)}{h(t,X_t)}\ind_{\{t>\sigma_k\}}\right) \leq \gamma_5C_1^{-1/\bar\gamma}  \sup_{0\leq \tau < \infty } \tau^{-\gamma_6}\exp\left(-\frac{k}{\bar\gamma}\tau^{-\eps}\right),  \]
    where $\gamma_6 = (2d-1)/2+d/(2\bar\gamma)$. The function of $\tau$ over which the $\sup$ is taken is concave and attains its maximum at $\left(\gamma_6 \bar\gamma/(k\eps)\right)^{-1/\eps}$. Hence, 
    \begin{equation}\label{eq:proof-absolute-continuity-expectationto0} 
    \bb{E}^h \left( \frac{g(t,X_t)}{h(t,X_t)}\ind_{\{t>\sigma_k\}}\right) \leq \gamma_5C_1^{-1/\bar\gamma}\left(\frac{\eps}{\gamma_6\bar\gamma}\right)^{-\gamma_6/\eps} k^{-\gamma_6/\eps}\exp\left(-\frac{\gamma_6}{\eps}\right) 
    \end{equation}
	Since, \eqref{eq:proof-absolute-continuity-expectationto0} tends to $0$ as $k\to\infty$, \ref{ass:main-fraction} is satisfied.

	For \ref{ass:main-likelihood}, note that as $\partial_t g + \frac12\Delta g = 0$, $\scr{A}g = \left(\scr{L}-\frac12 \Delta\right)g$. Hence, $\scr{A}g/g = V(\log g)$. Clearly, by the Cauchy-Schwarz inequality
	\[ \abs{ V(\log g)(t,x) } \leq \abs{V(x)}\abs{\nabla\log g} \]
	Since $V$ is smooth and $M$ is compact, $\abs{V}$ is bounded. Hence, by \Cref{thm:upper-bound-gradlogg}, there exists a constant $C$ such that 
	\[ \abs{V(\log g)(t,x)} \leq C\left(\frac{\dist(x,x_T)}{T-t} + \frac{1}{\sqrt{T-t}}\right) \] 
    Now notice that, by \eqref{eq:zeta-bounds}, on $A_k^t$, we have that 
    \begin{equation}
        \label{eq:geodesic-distance-on-Akt}
        \dist(X_t, x_T) \leq \sqrt{2(T-t)^{1-\eps}\left(k-\beta_1(t)\right)}
    \end{equation}
    Hence, we obtain that 
    \[ \abs{V(\log g)(t, X_t)}\ind_{A_k^t} \leq C \left( \frac{\sqrt{2(k-\beta_1(t))}}{\sqrt{(T-t)^{1+\eps}}} + \frac{1}{\sqrt{T-t}} \right)\]
	Since $\beta_1(t)$ is bounded on $[t_0,T)$ (by \ref{obs:logbounded}) and tends to  $0$ as $t\uparrow T$, the above term is integrable on $[t_0,T]$, \ref{ass:main-likelihood} is satisfied. We proof \ref{ass:P(AT)=1} separately in \Cref{thm:probability1}. The result now follows from \Cref{thm:absolute-continuity}, where the form of the Radon-Nikodym derivative in \Cref{eq:radon-nikodym-closed} follows from the observation that $\scr{A}g/g = V(\log g)$.
 
        \end{proof}

	\begin{lem}
	\label{lem:limitfraction}
		Suppose \ref{ass:main-expectations} is satisfied and let $s<T$. For any bounded $\mathcal{F}_s$-measurable function $f_s$, 
		\[\lim_{t\uparrow T} \bb{E}^h \left(f_s(X)\frac{g(t,X_t)}{h(t,X_t)}\right) = \bb{E}^h f_s(X) \]
	\end{lem}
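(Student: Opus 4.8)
The plan is to reduce the claim to Assumption \ref{ass:main-expectations} via the tower property and a uniform integrability argument. First I would rewrite the expectation under $\bb{P}^h$ back in terms of $\bb{P}$ using the change of measure \eqref{eq:change-of-measure} together with $\scr{A}h = 0$, which gives $\der{\bb{P}^h_t}{\bb{P}_t} = E_t^h(X) = h(t,X_t)/h(0,x_0)$. Hence, for $s < t < T$,
\[
	\bb{E}^h\!\left(f_s(X)\frac{g(t,X_t)}{h(t,X_t)}\right) = \bb{E}\!\left(f_s(X)\frac{g(t,X_t)}{h(0,x_0)}\right) = \bb{E}^h\!\left(\frac{f_s(X)}{h(s,X_s)}\,\bb{E}\big(g(t,X_t)\mid\mathcal{F}_s\big)\right),
\]
where in the last step I condition on $\mathcal{F}_s$ (using that $f_s$ is $\mathcal{F}_s$-measurable and $h(0,x_0)$ is constant) and then re-express the outer expectation under $\bb{P}^h$ by multiplying and dividing by $h(s,X_s)$, which is legitimate since $h > 0$.

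Next I would pass to the limit $t \uparrow T$ inside the expectation. By the Markov property, $\bb{E}(g(t,X_t)\mid\mathcal{F}_s) = \bb{E}(g(t,X_t)\mid X_s)$, and Assumption \ref{ass:main-expectations} gives both that this quantity is $\bb{P}$-a.s.\ bounded uniformly in $t \in [t_0,T)$ and that it converges $\bb{P}$-a.s.\ to $h(s,X_s)$ as $t \uparrow T$. Since $f_s$ is bounded and $h(s,X_s) > 0$, the integrand $\frac{f_s(X)}{h(s,X_s)}\bb{E}(g(t,X_t)\mid X_s)$ is dominated (uniformly in $t$) by $\frac{\|f_s\|_\infty}{h(s,X_s)}\cdot C$ for the uniform bound $C$; one should check that this dominating function is $\bb{P}^h$-integrable, which follows because $\bb{E}^h\big(1/h(s,X_s)\big) = \bb{E}\big(h(s,X_s)/h(0,x_0)\cdot 1/h(s,X_s)\big) = 1/h(0,x_0) < \infty$. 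Dominated convergence then yields
\[
	\lim_{t\uparrow T} \bb{E}^h\!\left(f_s(X)\frac{g(t,X_t)}{h(t,X_t)}\right) = \bb{E}^h\!\left(\frac{f_s(X)}{h(s,X_s)}\,h(s,X_s)\right) = \bb{E}^h f_s(X).
\]

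The main obstacle I anticipate is the integrability bookkeeping: one must be careful that the "uniform boundedness" in \ref{ass:main-expectations} is a genuine $\bb{P}$-a.s.\ bound by a single (possibly random but integrable) quantity, so that dominated convergence applies under $\bb{P}^h$ rather than merely under $\bb{P}$. The computation $\bb{E}^h(1/h(s,X_s)) = 1/h(0,x_0)$ handles this cleanly, but it is the one place where the positivity of $h$ and the explicit form of $E_s^h$ are both essential; everything else is a routine application of the tower property and the Markov property.
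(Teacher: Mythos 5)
Your proof is correct and follows essentially the same route as the paper: change measure back to $\bb{P}$ so that $h(t,X_t)$ cancels, apply the tower property, and invoke Assumption \ref{ass:main-expectations} with dominated convergence. The only difference is your detour of re-expressing the conditioned quantity under $\bb{P}^h$ before taking the limit; the paper stays under $\bb{P}$, where the dominating function is simply the constant $\|f_s\|_\infty C/h(0,x_0)$, which makes the integrability bookkeeping you worry about immediate.
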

	\begin{proof} Since, $\dd\bb{P}_t^h = h(t,X_t)/h(0,x_0) \dd\bb{P}_t$,
		\begin{equation}
		\begin{aligned}
			\lim_{t\uparrow T} \bb{E}^h \left(f_s(X)\frac{g(t,X_t)}{h(t,X_t)}\right) &= \lim_{t\uparrow T} \bb{E} \left(\frac{f_s(X)}{h(0,x_0)}g(t,X_t)\right) \\
			&= \lim_{t\uparrow T} \bb{E}\left( \frac{f_s(X)}{h(0,x_0)}\bb{E}\left(g(t,X_t)\mid \mathcal{F}_s\right)\right) = \bb{E}\left(f_s(X)\frac{h(s,X_s)}{h(0,x_0)} \right) = \bb{E}^hf_s(X)
		\end{aligned}
		\end{equation}
	Here, the second but last equality follows from \ref{ass:main-expectations} and dominated convergence. 
	\end{proof}

\begin{thm}
	\label{thm:probability1}
		Let $A_k^t$ be defined as in \eqref{eq:defAkt}. Then $\bb{P}^g(A^T)=1$.
	\end{thm}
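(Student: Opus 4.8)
The plan is to prove the operative content of the statement: under $\bb{P}^g$ the path functional $s\mapsto\zeta(s,X_s)$ stays bounded up to time $T$. Since $g(s,\cdot)=\kappa(T-s,\cdot,x_T)$ is continuous and bounded away from $0$ on $M$ for every $s<T$, the map $s\mapsto\zeta(s,X_s)$ is continuous on $[t_0,t]$ for each $t<T$; hence $\bigcup_k A_k^T=\{\sup_{t_0\le s<T}\zeta(s,X_s)<\infty\}$, and because $\bigcup_k A_k^T\subseteq A^T$ it suffices to show $\bb{P}^g\big(\sup_{t_0\le s<T}\zeta(s,X_s)<\infty\big)=1$. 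With the stopping times $\sigma_k=T\wedge\inf\{s\ge t_0:\zeta(s,X_s)>k\}$ already used in the proof of \Cref{thm:absolute-continuity-drift}, the events $\{\sigma_k<T\}$ decrease to $\{\sup_{t_0\le s<T}\zeta(s,X_s)=\infty\}$, so the goal reduces to $\lim_{k\to\infty}\bb{P}^g(\sigma_k<T)=0$.

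Fix $t\in(t_0,T)$. Since $\{\sigma_k\le t\}$ is $\mathcal{F}_{\sigma_k\wedge t}$-measurable and $E^g_{\cdot}(X)$ is a $\bb{P}$-martingale, optional stopping gives $\bb{P}^g(\sigma_k\le t)=\bb{E}\big[\ind_{\{\sigma_k\le t\}}E^g_{\sigma_k}(X)\big]$, where, using $\scr{A}g/g=V(\log g)$, $E^g_{\sigma_k}(X)=\frac{g(\sigma_k,X_{\sigma_k})}{g(0,x_0)}\exp\!\big(-\int_0^{\sigma_k}V(\log g)(s,X_s)\dd s\big)$. On $\{\sigma_k\le t\}$ the path is continuous at $\sigma_k<T$, so $\zeta(\sigma_k,X_{\sigma_k})\ge k$, and \eqref{eq:zeta-bounds} then yields $\dist(X_{\sigma_k},x_T)^2\ge 2(T-\sigma_k)^{1-\eps}\big(k-\beta_2(\sigma_k)\big)$. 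Feeding this into the upper bound of \Cref{thm:bounds-on-heat-kernel} and using that $\beta_2$ is bounded on $[t_0,T)$ gives, for $k$ large, $g(\sigma_k,X_{\sigma_k})\le C_2\,(T-\sigma_k)^{-(2d-1)/2}\exp\!\big(-\tfrac12 k(T-\sigma_k)^{-\eps}\big)$; since $\tau\mapsto\tau^{-(2d-1)/2}e^{-k\tau^{-\eps}/2}$ is increasing on $(0,T]$ once $k$ is large, this is at most $C_2 T^{-(2d-1)/2}e^{-kT^{-\eps}/2}$.

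For the exponential factor I estimate $|V(\log g)(s,x)|\le\norm{V}_\infty|\nabla\log g(s,x)|$ and apply \Cref{thm:upper-bound-gradlogg} together with the bound $\dist(X_s,x_T)^2\le 2(T-s)^{1-\eps}k$, valid on $[t_0,\sigma_k)$ by \eqref{eq:zeta-bounds} and $\beta_1\ge0$. Because $\int_{t_0}^T(T-s)^{-(1+\eps)/2}\dd s$ and $\int_{t_0}^T(T-s)^{-1/2}\dd s$ are finite (only $\eps<1$ is used) and $V(\log g)$ is bounded on $[0,t_0]\times M$, one obtains $\int_0^{\sigma_k}|V(\log g)(s,X_s)|\dd s\le C_0+C_1\sqrt k$ with $C_0,C_1$ independent of $k$. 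Combining the two estimates, $\ind_{\{\sigma_k\le t\}}E^g_{\sigma_k}(X)\le \delta_k:=\tfrac{C_2}{g(0,x_0)}T^{-(2d-1)/2}\exp\!\big(C_0+C_1\sqrt k-\tfrac12 kT^{-\eps}\big)$, a deterministic bound independent of $t$ with $\delta_k\to0$. Hence $\bb{P}^g(\sigma_k<T)=\lim_{t\uparrow T}\bb{P}^g(\sigma_k\le t)\le\delta_k\to0$, which completes the proof.

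I expect no serious obstacle; the two points needing care are the identity $\bb{P}^g(\sigma_k\le t)=\bb{E}[\ind_{\{\sigma_k\le t\}}E^g_{\sigma_k}(X)]$ (optional stopping for the Radon--Nikodym martingale) and deducing $\zeta(\sigma_k,X_{\sigma_k})\ge k$ from continuity of $s\mapsto\zeta(s,X_s)$ on compact subintervals of $[t_0,T)$. The structurally essential feature is that the heat-kernel tail contributes $-\tfrac12 kT^{-\eps}$, linear in $k$, while the drift correction contributes only $O(\sqrt k)$ --- precisely where compactness of $M$ and smoothness (hence boundedness) of $V$ enter.
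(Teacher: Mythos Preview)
Your argument is correct and takes a genuinely different route from the paper. The paper works pathwise under $\bb{P}^g$: it applies It\^o's formula to $\xi_t=\zeta_t-\beta_1(t)$, bounds the local-martingale part via an exponential-martingale estimate (their Lemma~A.2), computes and bounds $\scr{A}^g\xi$, and then invokes a generalised Gr\"onwall inequality to conclude that $\xi_t$ stays finite as $t\uparrow T$. Your approach instead reverts to the base measure $\bb{P}$ through the optional-stopping identity $\bb{P}^g(\sigma_k\le t)=\bb{E}\big[\ind_{\{\sigma_k\le t\}}E^g_{\sigma_k}(X)\big]$ and bounds the Radon--Nikodym density $E^g_{\sigma_k}(X)$ directly: the factor $g(\sigma_k,X_{\sigma_k})$ contributes $e^{-ck}$ via the heat-kernel upper bound and the constraint $\zeta(\sigma_k,X_{\sigma_k})\ge k$, while the drift correction $\exp\!\big(-\int_0^{\sigma_k}V(\log g)\big)$ contributes at most $e^{C_1\sqrt{k}}$ via \Cref{thm:upper-bound-gradlogg} and the constraint $\zeta\le k$ on $[t_0,\sigma_k)$. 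The linear-vs-square-root competition you highlight is exactly the mechanism, and it sidesteps It\^o calculus, the martingale-bracket bound, and Gr\"onwall entirely. The trade-off is that the paper's argument yields a pathwise control of $\zeta_t$ (and is structured to generalise to other choices of $g$), whereas your argument gives only the probability estimate $\bb{P}^g(\sigma_k<T)\le\delta_k$---but that is precisely what the theorem asks for, so your route is the more economical one here. The two care points you flag (optional stopping for the density martingale, and $\zeta(\sigma_k,X_{\sigma_k})\ge k$ from continuity) are both sound.
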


\begin{proof}
    To reduce notation, we write a subscript $t$ to indicate evaluation in $(t,X_t)$ for functions of $(t,x)\in[t_0,T)\times M$. It is sufficient to show that $\zeta_t$ is $\bb{P}^g$-almost surely bounded as $t\uparrow T$. To do this, we find an upper bound on 
    \begin{equation}
        \label{eq:defxi}
        \xi_t := \zeta_t-\beta_1(t),
    \end{equation} which in turn yields the result as $\beta_1(t)\downarrow 0$ as $t\uparrow T$. 
    
    Notice that, under $\bb{P}^g$, by It\^o's formula,
    \begin{equation}
    \label{eq:xi-ito}
        \xi_t = \xi_{t_0} + M_t^\xi + \int_{t_0}^t \scr{A}^g\xi_s \dd s,
    \end{equation}
    where $M^\xi$ denotes the process $M^\xi_t = \int_{t_0}^t \eta(s)\nabla \log g_s \dd W_s $. 

    The remainder of the proof is structured as follows: First we find an upper bound of $M_t^\xi + \int_{t_0}^t \scr{A}^g \xi_s\dd s$ in terms of functions of $t$ and $\xi$. Then we apply a generalized Gr{\"o}nwall inequality to find an upper bound for $\xi_t$, which is bounded as $t\uparrow T$. 

    First observe that 
    \begin{equation}
    \label{eq:Mtxi}
        M_t^\xi =  M_t^\xi - \frac12 \left[M^\xi\right]_t + \frac12 \int_{t_0}^t \abs{\eta(s)\nabla\log g_s}^2 \dd s
    \end{equation}
    By Lemma A.2 of \cite{corstanje2021conditioning}, there exists an almost surely finite random variable $C$ such that 
    \begin{equation} 
        \label{eq:exponential-martingale-bound}
            M_t^\xi - \frac12 \left[M^\xi\right]_t \leq C\log\left(\frac{1}{T-t}\right).
    \end{equation}
    Moreover, we infer from \eqref{eq:zeta-bounds} that $\dist(X_t,x_T) \leq \sqrt{{2(T-t)\xi_t}/{\eta(t)}}$.
    Hence, by \Cref{thm:upper-bound-gradlogg}, there is a positive constant $c_1$ such that
    \begin{equation}
        \label{eq:gradlogg-bound}
        \abs{\nabla\log g_t}\leq  c_1 \left( \sqrt{\frac{2\xi_t}{\eta(t)(T-t)}} + \frac{1}{\sqrt{T-t}}\right)
    \end{equation}
    Therefore, by combining \eqref{eq:Mtxi} with \eqref{eq:exponential-martingale-bound} and \eqref{eq:gradlogg-bound}, we find a bound on $M^\xi_t$ in terms of $t$ and $\xi$ through 
    \begin{equation}
        \label{eq:Mtxi-bound}
        M_t^\xi \leq C\log\left(\frac{1}{T-t}\right) + \frac{c_1^2}{2}\int_{t_0}^t \eta(s)^2\left( \sqrt{\frac{2\xi_s}{\eta(s)(T-s)}} + \frac{1}{\sqrt{T-s}}\right)^2 \dd s
    \end{equation}
    
    Next, we study $\scr{A}^g \xi$. A direct computation yields that  
    \begin{equation}
    \label{eq:Agxi}
    \begin{aligned}
        \scr{A}^g\xi_t &= \scr{A}^g\zeta_t -\partial_t \beta_1 \\
        &= \scr{A}^g\zeta_t  + \log\left(\frac{C_2}{(T-t)^{(2d-1)/2}}\right)\partial_t\eta(t) + \eta(t) \partial_t\log\left(\frac{C_2}{(T-t)^{(2d-1)/2}}\right) \\
        &= \scr{A}^g\zeta_t  + \eta(t)\log\left(\frac{C_2}{(T-t)^{(2d-1)/2}}\right)\partial_t\log\eta(t) +\eta(t)\frac{2d-1}{2(T-t)} \\
        &= \scr{A}^g\zeta_t - \beta_1(t)\partial_t\log\eta(t) + \eta(t)\frac{2d-1}{2(T-t)} 
    \end{aligned}
    \end{equation}

    In order to compute $\scr{A}^g\zeta_t$, we use the identity from \Cref{lem:laplace_log_g}. This yields 
    \begin{equation}
    \label{eq:Agzeta}
        \begin{aligned}
            \scr{A}^g\zeta_t &= -\log g_t \partial_t\eta(t) -\eta(t)\partial_t \log g_t - \eta(t) \scr{L}^g\log g_t\\
            &= \zeta_t \partial_t \log \eta(t) - \eta(t)\left(\partial_t + V + \nabla\log g + \frac12\Delta\right)\log g_t \\
            &= \zeta_t\partial_t\log\eta(t) - \eta(t)V(\log g)t - \eta(t)\abs{\nabla\log g_t}^2 - \eta(t)\left(\partial_t+\frac12\Delta\right)\log g_t \\
            &= \zeta_t\partial_t\log \eta(t) - \eta(t)V\log g_t -\frac12 \eta(t)\abs{\nabla\log g_t}^2
        \end{aligned}
    \end{equation}
    Upon substituting \eqref{eq:Agzeta} into \eqref{eq:Agxi}, we obtain 
    \begin{equation}
        \label{eq:Agxi2}
        \scr{A}^g\xi_t = \xi_t \partial_t \log\eta(t) + \eta(t)\frac{2d-1}{2(T-t)} - \eta(t)V\log g_t - \frac12 \eta(t)\abs{\nabla\log g_t}^2
    \end{equation}
    We proceed to find an upper bound on $\scr{A}^g \xi_t$. Since $\eta$ and $\abs{\nabla\log g}^2$ are nonnegative on $[t_0,T)$, we immediately have 
        \begin{equation}
        \label{eq:Agxi-firstineq}
        \scr{A}^g\xi_t \leq \xi_t \partial_t \log\eta(t) + \eta(t)\frac{2d-1}{2(T-t)} - \eta(t)V\log g_t 
    \end{equation}
    Similar to the proof that Assumption \ref{ass:main-likelihood} is satisfied in the proof of  \Cref{thm:absolute-continuity-drift}, we find a positive constant $c_2$ such that 
    \begin{equation} 
    \label{eq:Vlogg-bound}
        \abs{V\log g_t} \leq c_2\left( \frac{\dist(X_t,x_T)}{T-t}+\frac{1}{\sqrt{T-t}}\right) \leq c_2\left( \sqrt{\frac{2\xi_t}{\eta(t)(T-t)}} + \frac{1}{\sqrt{T-t}}\right)
    \end{equation}
    Where the second inequality follows again follows from \eqref{eq:zeta-bounds} as $\dist(X_t,x_T) \leq \sqrt{{2(T-t)\xi_t}/{\eta(t)}}$

    Using the preceding, we can find an upper bound for $\xi_t$, as displayed in \eqref{eq:xi-ito}, as follows. We bound $M_t^\xi$ using \eqref{eq:Mtxi-bound}. To bound $\int_{t_0}^t \scr{A}^g\xi_s\dd s$, we use the expression derived in \eqref{eq:Agxi-firstineq}, in which we apply \eqref{eq:Vlogg-bound}. By combining all of this, we obtain 
    \begin{equation}
    \begin{aligned}
        \xi_t &\leq \xi_{t_0} + C\log\left(\frac{1}{T-t}\right) \\
        &\quad +  \frac{c_1^2}{2}\int_{t_0}^t \eta(s)^2\left(\sqrt{\frac{2\xi_s}{\eta(s)(T-s)}} + \frac{1}{\sqrt{T-s}}\right)^2 \dd s \\
        &\quad + \int_{t_0}^t \xi_s \partial_s\log\eta(s) \dd s 
         +\int_{t_0}^t \eta(s)\frac{2d-1}{T-s}\dd s   \\
        &\quad + c_2  \int_{t_0}^t \eta(s) \left(\sqrt{\frac{2\xi_s}{\eta(s)(T-s)}} + \frac{1}{\sqrt{T-s}}\right) \dd s
        \end{aligned}
    \end{equation}
    Summarizing: 
    \begin{equation}
        \xi_t \leq  \xi_{t_0} + C\log\left(\frac{1}{T-t}\right) +\int_{t_0}^t \ell_0(s) \dd s+\int_{t_0}^t \ell_1(s)\xi_s^{1/2} \dd s+\int_{t_0}^t \ell_2(s)\xi_s \dd s , 
    \end{equation}
    where 
    \begin{equation}
        \begin{aligned}
            \ell_0(s) &= \frac{c_1^2}{2}\frac{\eta(s)^2}{T-s} + \eta(s)\frac{2d-1}{T-s} + c_2\frac{\eta(s)}{\sqrt{T-s}}\\
            \ell_1(s) &= \eta(s)^{3/2}\frac{c_1^2\sqrt{2}}{T-s} + c_2\sqrt{\frac{2\eta(s)}{T-s}}\\ 
            \ell_2(s) &= \frac{c_1^2\eta(s)}{T-s} + \partial_s\log\eta(s)
        \end{aligned}
    \end{equation}

    We now apply Theorem 2.1 of \cite{agarwal2005generalization} using, in their notation, $a(t) = \xi(t_0,X_{t_0}) + \ell_0(t)$, $b_1(t)=b_2(t)=t$, $f_1(t,s)=\ell_1(s)$, $f_2(t,s)=\ell_2(s)$, $w_1(u) = u^{1/2}$, $w_2(u) = u$, $u_1=0$ and $u_2=1$. This then yields that 
\[ \xi_t\leq \left(\sqrt{\xi_{t_0} + C\log\left(\frac{1}{T-t}\right) + \int_{t_0}^t \ell_0(s)\dd s}+\frac12\int_0^t \ell_1(s)\dd s\right)^2 \exp\left(\int_{t_0}^t \ell_2(s)\dd s \right) \]

By observation \ref{obs:integrable}, the map $s\mapsto \eta(s)/(T-s)$ is integrable on $[t_0,T)$ and thus $\ell_0$ and $\ell_1$ are integrable as well as linear combinations of compositions that preserve integrability. Hence, $\int_{t_0}^t \ell_0(s)\dd s$ and $\int_{t_0}^t \ell_1(s)\dd s$ are bounded in $\lim_{t\uparrow T}$. Moreover, since
\begin{equation}
\begin{aligned}
    \exp\left(\int_{t_0}^t \ell_2(s)\dd s \right) &= \frac{\eta(t)}{\eta(t_0)} \exp\left(c_1^2 \int_{t_0}^t \frac{\eta(s)}{T-s} \dd s \right),
\end{aligned}
\end{equation}
    and, by observation \ref{obs:logbounded} $\eta(t)\log\left(\frac{1}{T-t}\right)$ is bounded, $\xi_t$ is almost surely bounded. 
\end{proof}

\begin{lem}
\label{lem:laplace_log_g}
    Suppose $g$ satisfies the heat equation $(\partial_t +\frac12\Delta) g=0$ Then 
    \[ \partial_t \log g + \frac12 \Delta \log g = -\frac12 \abs{\log g}^2\]
\end{lem}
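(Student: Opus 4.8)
The plan is to reduce the identity to the elementary chain rule for the Laplace--Beltrami operator; note first that the right-hand side is meant to read $-\tfrac12\abs{\nabla\log g}^2$, consistent with the way this lemma is used in the derivation of \eqref{eq:Agzeta}.

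Since $g>0$, the function $\log g$ and its space-time derivatives are well defined, and $\partial_t\log g=(\partial_t g)/g$ while $\nabla\log g=(\nabla g)/g$. The only substantive step is the pointwise identity, valid for any positive $g$,
\[
\Delta\log g=\frac{\Delta g}{g}-\frac{\abs{\nabla g}^2}{g^2}=\frac{\Delta g}{g}-\abs{\nabla\log g}^2 .
\]
This is the special case $\varphi=\log$ of the general chain rule $\Delta(\varphi\circ g)=\varphi'(g)\,\Delta g+\varphi''(g)\,\abs{\nabla g}^2$ for smooth $\varphi$, which in turn follows from $\Delta$ being a second-order differential operator with vanishing zeroth-order part together with the derivation property of $\nabla$; it may be verified in normal coordinates at a point, or by differentiating the local-coordinate expression $\Delta u=\tfrac{1}{\sqrt{\det h}}\,\partial_i\bigl(\sqrt{\det h}\,h^{ij}\partial_j u\bigr)$, or simply quoted. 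With $\varphi'(g)=1/g$ and $\varphi''(g)=-1/g^2$ one obtains the displayed equation.

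Combining the two computations gives
\[
\partial_t\log g+\tfrac12\Delta\log g=\frac{1}{g}\Bigl(\partial_t g+\tfrac12\Delta g\Bigr)-\tfrac12\abs{\nabla\log g}^2 ,
\]
and the bracketed term vanishes by the hypothesis $(\partial_t+\tfrac12\Delta)g=0$, which is exactly the assertion of the lemma.

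There is essentially no obstacle here: the proof is one line of algebra once the chain rule for $\Delta$ is in place, and that chain rule is a standard fact. The only care needed is cosmetic — correcting the typographical omission of $\nabla$ in the statement and being explicit that $\Delta$ denotes the Laplace--Beltrami operator so that the chain-rule identity applies verbatim on $M$.
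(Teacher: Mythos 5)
Your proof is correct and follows essentially the same route as the paper's: both reduce to the identity $\Delta\log g=\Delta g/g-\abs{\nabla\log g}^2$ (you via the chain rule for $\Delta$, the paper via the product rule for the divergence) and then invoke the heat equation. You also correctly flag the typographical omission of $\nabla$ in the statement, and your constants are right, whereas the paper's own displayed computation drops a factor of $\tfrac12$ in an intermediate line even though the stated identity (and its later use in \eqref{eq:Agzeta}) carries the correct $\tfrac12$.
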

\begin{proof}
Using the product rule for the divergence operator
    \begin{equation*}
        \begin{aligned}
            \frac{1}{2}\Delta\log g &= \frac12 \mathrm{div}\left(\frac{\nabla g}{g}\right) = \frac12\inner{\nabla\left(\frac{1}{g}\right)}{\nabla g} + \frac{1}{2g}\mathrm{div}(\nabla g) \\
            &= -\inner{\frac{\nabla g}{g}}{\frac{\nabla g}{g}} - \frac{\partial_t g}{g} = -\abs{\nabla\log g}^2 - \partial_t \log g
        \end{aligned}
    \end{equation*}
\end{proof}

\begin{appendix}
\section{Theorem 4.2 of \cite{palmowski2002}}

	\begin{thm}\label{thm:PR}
		Let $h\in\mathcal{H}$ be such that for all $t\in[0,T)$, $\bb{P}$-almost surely
				\[ \int_0^t \abs{\scr{A}h(s,X_s)}\dd s < \infty \qquad \text{and}\qquad \int_0^t \abs{\frac{\scr{A}h(s,X_s)}{h(s,X_s)}}\dd s < \infty \] 	
		Then $X$ is Markovian under $\bb{P}^h$. Moreover, if there exists a core $\mathbf{D}\subseteq\scr{D}(\scr{A})$ such that for all $f\in\mathbf{D}$, $fh\in\mathbf{D}$, then $X$ is a solution to the martingale problem for $\scr{A}^h$ on $\left(\Omega,\mathcal{F},\{\mathcal{F}_t\}_t,\bb{P}^h\right)$, where $\scr{A}^h$ is given by 
		\begin{equation} 
		\label{eq:conditioned-generator}
			\scr{A}^hf = \frac{\scr{A}(fh)-f\scr{A}h}{h} = \scr{A}f + \frac{1}{h}\Gamma(f,h), \qquad f\in\mathbf{D},
		\end{equation}
		where $\Gamma$ denotes the operateur Carr\'e du Champ $\Gamma(f,h)=\scr{A}(fh)-f\scr{A}h-h\scr{A}f$.
	\end{thm}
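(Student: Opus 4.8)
The plan is to establish the two assertions in turn. Both rest on three facts available from the hypotheses: first, $E_t^h(X)$ is a genuine $\bb{P}$-martingale (this is the defining property of $h\in\mathcal{H}$), with the multiplicative structure $E_t^h(X)=E_s^h(X)\,\Lambda_{s,t}$ for $0\le s\le t<T$, where $\Lambda_{s,t}=\tfrac{h(t,X_t)}{h(s,X_s)}\exp\bigl(-\int_s^t\tfrac{\scr{A}h}{h}(r,X_r)\dd r\bigr)$ is a measurable functional of $(X_r)_{r\in[s,t]}$ alone; second, the two integrability hypotheses make $\int_0^t\abs{\scr{A}h(s,X_s)}\dd s$ and $\int_0^t\abs{\scr{A}h(s,X_s)/h(s,X_s)}\dd s$ finite, legitimising the generator computations below; third, the algebraic identity $\scr{A}(fh)-f\scr{A}h=h\,\scr{A}^hf$ built into the definition of $\scr{A}^h$.

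For the Markov property under $\bb{P}^h$, fix $s\le t$ and a bounded measurable $\phi$. The abstract Bayes formula together with the $\mathcal{F}_s$-measurability of $E_s^h(X)$ gives
\[
\bb{E}^h\bigl[\phi(X_t)\mid\mathcal{F}_s\bigr]=\frac{\bb{E}\bigl[\phi(X_t)E_t^h(X)\mid\mathcal{F}_s\bigr]}{E_s^h(X)}=\bb{E}\bigl[\phi(X_t)\Lambda_{s,t}\mid\mathcal{F}_s\bigr].
\]
Since $\phi(X_t)\Lambda_{s,t}$ depends only on the path of $X$ on $[s,t]$, the Markov property of $X$ under $\bb{P}$ forces the right-hand side to be $\sigma(X_s)$-measurable, which is precisely the simple Markov property of $X$ under $\bb{P}^h$; a monotone-class / backward-induction argument then promotes this to the full Markov property of the space--time process.

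For the martingale-problem assertion I would show, for $f\in\mathbf{D}$, that $M_t^f:=f(t,X_t)-f(0,X_0)-\int_0^t\scr{A}^hf(r,X_r)\dd r$ is a $\bb{P}^h$-martingale by proving that $M_t^f\,E_t^h(X)$ is a $\bb{P}$-martingale. Abbreviate $\tilde E_t:=h(0,X_0)E_t^h(X)=h(t,X_t)D_t$ with $D_t=\exp\bigl(-\int_0^t\tfrac{\scr{A}h}{h}(r,X_r)\dd r\bigr)$, a continuous finite-variation process with $\dd D_t=-\tfrac{\scr{A}h}{h}(t,X_t)D_t\dd t$. Since $fh\in\mathbf{D}\subseteq\scr{D}(\scr{A})$, the process $(fh)(t,X_t)-(fh)(0,X_0)-\int_0^t\scr{A}(fh)(r,X_r)\dd r$ is a $\bb{P}$-martingale, so the product rule (no covariation term because $D$ is continuous of finite variation) gives
\[
\dd\bigl[(fh)(t,X_t)D_t\bigr]=D_t\bigl[\scr{A}(fh)-f\scr{A}h\bigr](t,X_t)\dd t+\dd L_t=\tilde E_t\,\scr{A}^hf(t,X_t)\dd t+\dd L_t
\]
for a $\bb{P}$-local martingale $L$. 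Combining this with the facts that $\tilde E$ is a $\bb{P}$-martingale (so $f(0,X_0)\tilde E_t$ is as well) and that integration by parts turns $\bigl(\int_0^t\scr{A}^hf(r,X_r)\dd r\bigr)\tilde E_t$ into a local martingale plus its finite-variation part $\int_0^t\tilde E_r\,\scr{A}^hf(r,X_r)\dd r$ — which exactly cancels the drift displayed above — one concludes that $M_t^f\tilde E_t$, hence $M_t^fE_t^h(X)$, is a $\bb{P}$-local martingale, so $M^f$ is a $\bb{P}^h$-local martingale; the core property of $\mathbf{D}$ then upgrades the validity of this for all $f\in\mathbf{D}$ to the statement that $\bb{P}^h$ solves the $\scr{A}^h$-martingale problem.

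The step I expect to be the main obstacle is the passage from local to true martingale: this is exactly where the two integrability assumptions on $\scr{A}h$ and $\scr{A}h/h$ enter, being used to keep $E^h$, the various time integrals, and the stochastic-integral remainders genuinely integrable rather than merely locally so. In the full generality of \cite{palmowski2002} one must additionally carry quadratic-covariation terms through the product rule, but for the diffusion $X=\pi(U)$ studied here every process involved is continuous, so those terms vanish and the computation above goes through as written.
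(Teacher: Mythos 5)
The paper offers no proof of this statement: it is quoted in the appendix verbatim as Theorem 4.2 of \cite{palmowski2002}, so there is no in-paper argument to compare yours against. That said, your proof is essentially the standard one and follows the strategy of the cited source. The Markov-property step is correct: the abstract Bayes formula plus the martingale property of $E^h$ reduces $\bb{E}^h[\phi(X_t)\mid\mathcal{F}_s]$ to $\bb{E}[\phi(X_t)\Lambda_{s,t}\mid\mathcal{F}_s]$, and the multiplicative functional $\Lambda_{s,t}$ depends only on $(X_r)_{r\in[s,t]}$, so the Markov property of $X$ under $\bb{P}$ makes this $\sigma(X_s)$-measurable. The martingale-problem step is also structurally sound: since $fh\in\mathbf{D}\subseteq\scr{D}(\scr{A})$, the product rule applied to $(fh)(t,X_t)D_t$ produces the drift $\tilde E_t\bigl[\scr{A}(fh)-f\scr{A}h\bigr]/h=\tilde E_t\,\scr{A}^hf$, which cancels against the finite-variation part of $\bigl(\int_0^t\scr{A}^hf\bigr)\tilde E_t$; and your remark that the covariation $[(fh)(\cdot,X),D]$ vanishes is valid because $D$ is continuous and of finite variation (this holds even without continuity of $X$, which is why the general result survives). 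The only place you stop short of a complete proof is the promotion from local to true martingale: as written you have shown that $X$ solves the \emph{local} martingale problem for $\scr{A}^h$ under $\bb{P}^h$, and you correctly flag that the two integrability hypotheses (together with, in the present setting, boundedness of $f\in\mathcal{C}^{1,2}$ on the compact manifold $M$) are what one must invoke to close this gap. I would count this as an honest, minor incompleteness rather than a wrong turn.
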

    \section{Numerical approximation of the guiding term on the Poincar\'e disk}
    \label{app:heat_kernel_H2}

To evaluate the guiding term of the diffusion bridges, we note that 
\[ \nabla_x \log p(s,x;t,y) = \nabla_x\log\mathbb{E}_{N(0,t-s)}\left[ \frac{Z}{\sqrt{\cosh Z - \cosh\rho_x}}\ind{\{Z>\rho_x\}}\right].\]
 Hence, the guiding term can be estimated using Monte Carlo sampling. Since $\rho_x\ge 0$ this will be very inefficient. To resolve this, we propose importance sampling. Note that with $\tau=\sqrt{t-s}$

\begin{align*}& \nabla_x\log\mathbb{E}_{N(0,\tau^2)}\left[ \frac{Z}{\sqrt{\cosh Z - \cosh\rho_x}}\ind_{\{Z>\rho_x\}}\right] = \nabla_x\log \bb{E}_{N(\mu,\tau^2)}\left[ \frac{Z\exp\left(-\frac{\mu Z}{\tau^2}+\frac{\mu^2}{2\tau^2}\right)}{\sqrt{\cosh Z-\cosh\rho_x}}\ind\{Z>\rho_x\}\right] 
\\ & \qquad\qquad = \nabla_x\log \bb{E}_{N(0,1)}\left[ \frac{(\mu+\tau Z)\exp\left(-\frac{\mu Z}{\tau}-\frac{\mu^2}{2\tau^2}\right)}{\sqrt{\cosh (\mu+\tau Z)-\cosh\rho_x}}\ind\{\mu+\tau Z>\rho_x\}\right] 
\\ &\qquad \qquad  = -\frac1{2\tau^2} \nabla_x \mu^2 +  \nabla_x\log \bb{E}_{N(0,1)}\left[ \frac{(\mu+\tau Z)\exp\left(-\frac{\mu Z}{\tau}\right)}{\sqrt{\cosh (\mu+\tau Z)-\cosh\rho_x}}\ind\{\mu+\tau Z>\rho_x\}\right]. 
\end{align*}
To ensure that at least $97\%$ of the samples exceed $\rho_x$, we choose $\mu\equiv\mu(x,\tau) = \rho_x + 2\tau$. This implies that the event in the indicator in the last line of the preceding display equals $\{Z>-2 \}$. We approximate the term with the expectation by Monte-Carlo simulation. Let $\{Z_i\}$ be independent $N(0,1)$-distributed random variables and define
\[ w(x,\tau, z) = \frac{(\mu(x,\tau) + \tau z) \exp(-\mu(x,\tau) z/\tau)}{\sqrt{\cosh(\mu(x,\tau)+\tau z)-\cosh(\rho_x)}} \mathbf{1}\{z>-2\}.   \]
Then we approximate $\nabla_x \log p(s,x;t,y)$ by 
\[ -\frac1{2\tau^2} \nabla_x \mu(x,\tau)^2 +\frac{ \nabla_x \sum_i   w(x,\tau, Z_i)}{\sum_i w(x,\tau, Z_i)},  \]
which is evaluated by (forward) automatic differentiation. When evaluating this gradient for different values of $x$, we fix the realisations of the random variables $\{Z_i\}$, where we can simply discard those realisations that are below $-2$.

\end{appendix}
\section*{funding}
This work is part of the research project ‘Bayes for longitudinal data on manifolds’ with project number OCENW.KLEIN.218, which is financed by the Nederlandse Organisatie voor Wetenschappelijk Onderzoek (NWO). The fourth author was supported by a VILLUM FONDEN grant (VIL40582), and the Novo Nordisk Foundation grant NNF18OC0052000.

\bibliographystyle{authordate1} 
\bibliography{biblio.bib}   
\end{document}